\crefname{section}{§}{§§}
\Crefname{section}{§}{§§}
\tikzset{node distance=2em, ch/.style={circle,draw,on chain,inner sep=2pt},chj/.style={ch,join},every path/.style={shorten >=4pt,shorten <=4pt},line width=1pt,baseline=-1ex}
\newcommand\be{\begin{equation}}
\newcommand\ee{\end{equation}}
\newcommand\bea{\begin{eqnarray}}
\newcommand\eea{\end{eqnarray}}
\newcommand\bi{\begin{itemize}}
	\newcommand\ei{\end{itemize}}
\newcommand\ben{\begin{enumerate}}
	\newcommand\een{\end{enumerate}}
\newtheorem{theorem}{Theorem}[section]
\newtheorem{corollary}[theorem]{Corollary}
\newtheorem{lemma}[theorem]{Lemma}
\newtheorem{proposition}[theorem]{Proposition}
\theoremstyle{definition}
\newtheorem{definition}[theorem]{Definition}
\theoremstyle{remark}
\newtheorem{remark}[theorem]{Remark}
\newtheorem{assumption}[theorem]{Assumption}
\newcommand{\Z}{\ensuremath{\mathbb{Z}}}
\newcommand{\simrightarrow}{\stackrel{\sim}{\rightarrow}}
\newcommand{\Q}{\mathbb{Q}}
\newcommand{\N}{\mathbb{N}}
\newcommand{\cF}{\mathcal{F}}
\newcommand{\cO}{\mathcal{O}}
\newcommand{\fg}{\mathfrak{g}}
\newcommand{\ft}{\mathfrak{t}}
\newcommand{\fb}{\mathfrak{b}}
\newcommand{\fm}{\mathfrak{m}}
\newcommand{\cR}{\mathcal{R}}
\newcommand{\Gal}{\mathrm{Gal}}
\newcommand{\cG}{\mathcal{G}}
\newcommand{\cT}{\mathcal{T}}
\newcommand{\BdR}{\mathrm{B}_{\mathrm{dR}}}
\newcommand{\GL}{\mathrm{GL}}
\newcommand{\wt}{\mathrm{wt}}
\newcommand{\Tor}{\mathrm{Tor}}
\newcommand{\Hom}{\mathrm{Hom}}
\newcommand{\Spf}{\mathrm{Spf}}
\newcommand{\Sp}{\mathrm{Sp}}
\newcommand{\Fil}{\mathrm{Fil}}
\newcommand{\fX}{\mathfrak{X}}
\newcommand{\trivar}{X_{\mathrm{tri}}(\overline{r})}
\numberwithin{equation}{section}
\title{Companion points on the eigenvariety with non-regular weights}
\author{Zhixiang Wu}
\date{}
\begin{document}
\address{B\^atiment 307, Facult\'e d'Orsay, Universit\'e Paris-Saclay, 91405 Orsay Cedex, France}
\email{zhixiang.wu@universite-paris-saclay.fr}
\begin{abstract}
	We prove the existence of all companion points on the eigenvariety of definite unitary groups associated with generic crystalline Galois representations with possibly non-regular weights under the Taylor-Wiles hypothesis, based on the previous results of Breuil-Hellmann-Schraen in \cite{breuil2019local} in regular cases and the author in \cite{wu2021local} in non-regular cases.
\end{abstract}
\thanks{The author would like to thank his thesis advisor Benjamin Schraen for discussions.
}
\maketitle
\section{Introduction}
Let $p$ be a prime number, then there exists pairs of ($p$-adic) elliptic modular eigenforms $(f,g)$ of level $\Gamma_0(p)\cap \Gamma_1(N)$ for some $p\nmid N$ such that $f$ and $g$ share the same eigenvalues for Hecke operators $T_{\ell}$ when $\ell\nmid pN$ (i.e. $f$ and $g$ are associated with the same $p$-adic Galois representation), but have \emph{different} non-zero eigenvalues for the $U_p$-operator. The results on the existence of such \emph{companion forms} for $p$-adic or mod-$p$ modular forms, as of Gross in \cite{gross1990tameness}, have many significant applications. For example, Buzzard and Taylor (\cite{buzzard1999companion}, and see \cite{buzzard2003analytic}) use Gross's results to prove the classicality of overconvergent $p$-adic weight one modular forms (hence certain cases of the Artin's conjecture), and their methods have been successfully generalized for Hilbert modular forms of parallel weight one, e.g., \cite{pilloni2017formes}, \cite{pilloni2016arithmetique} and \cite{sasaki2019integral}.\par
In \cite{hansen2017universal}, Hansen made a conjecture on the existence of all companion forms for finite slope overconvergent $p$-adic automorphic forms of general $\GL_n$ in the language of determining the set of \emph{companion points} on the eigenvariety that are associated with the same $p$-adic Galois representation but with possibly different $U_p$-eigenvalues or weights. Similar to the weight part of Serre's modularity conjecture, the recipes for companion forms are given by the $p$-adic local Galois representations. In fact, the conjecture on companion points is closely related to Breuil's locally analytic socle conjecture in \cite{breuil2016versI}\cite{breuil2015versII} from the point of view of the local-global compatibility in the locally analytic aspect of the $p$-adic local Langlands program.\par
We will work in the setting of definite unitary groups as Breuil. Let $F$ be a quadratic imaginary extension of a totally real field $F^+$. Let $S_p$ be the set of places of $F^+$ that divide $p$. We assume that each $v\in S_p$ splits in $F$. Let $G$ be a definite unitary group of rank $n\geq 2$ over $F^+$ that is split over $F$ (so that $G(F^+\otimes_{\Q}\Q_p)\simeq \prod_{v\in S_p}\GL_n(F_v^{+})$). Then an eigenvariety $Y(U^p,\overline{\rho})$ of $G$, of certain tame level $U^p$ and localized at a modular absolutely irreducible $\overline{\rho}:\Gal(\overline{F}/F)\rightarrow \GL_n(\overline{\mathbb{F}}_p)$, is a rigid analytic space parametrizing pairs $(\rho,\underline{\delta})$ where $\rho:\Gal(\overline{F}/F)\rightarrow \GL_n(\overline{\mathbb{Q}}_p)$ are continuous representations which lift $\overline{\rho}$ and $\underline{\delta}=(\underline{\delta}_v)_{v\in S_p}=(\delta_{v,i})_{v\in S_p,i=1,\cdots,n}: \prod_{v\in S_p}((F^{+}_v)^{\times})^n\rightarrow \overline{\Q}_p^{\times}$ is a continuous character such that $\rho$ is associated with a finite slope overconvergent $p$-adic automorphic form of $G$ which has ``weight'' $\underline{\delta}\mid_{\prod_{v\in S_p}(\cO_{F_v^{+}}^{\times})^n}$ and has ``$U_p$-eigenvalues'' $\prod_{j=1}^i\delta_{v,j}(\varpi_{F_v^+})$ for $v\in S_p$, $i=1,\cdots n$ where $\varpi_{F_v^{+}}$ denotes some uniformizers. \par
Recall an algebraic character of $F_v^{+}$ has the form $(F_v^{+})^{\times}\rightarrow \overline{\Q}_p^{\times}:z\mapsto \prod_{\tau:F_v^{+}\hookrightarrow \overline{\Q}_p}\tau(z)^{k_{\tau}}$ for some $k_{\tau}\in \Z$. Now take a point $x=(\rho,\underline{\delta})\in Y(U^p,\overline{\rho})$ and assume that $\rho_v:=\rho\mid_{\Gal(\overline{F_{\widetilde{v}}}/F_{\widetilde{v}})}$ is crystalline for all $v\in S_p$ where $\widetilde{v}\mid v$ is a place of $F$ chosen for each $v\in S_p$. Then $\underline{\delta}$ is locally algebraic, i.e. $\underline{\delta}=\underline{\delta}_{\mathrm{alg}}\underline{\delta}_{\mathrm{sm}}$ where each $\delta_{\mathrm{alg},v,i}$ is algebraic and $\delta_{\mathrm{sm},v,i}$ is smooth. A companion point $(\rho,\delta')$ of $x$ falls in one of the following two types: 
\begin{enumerate}[label=(\alph*)]
	\item $\underline{\delta}_{\mathrm{alg}}'\neq \underline{\delta}_{\mathrm{alg}}$ but $\underline{\delta}_{\mathrm{sm}}'=\underline{\delta}_{\mathrm{sm}}$ (different ``weights'');
	\item $\underline{\delta}_{\mathrm{sm}}'\neq \underline{\delta}_{\mathrm{sm}}$ (different ``$U_p$-eigenvalues up to some normalizations'').
\end{enumerate}
Our main theorem is the following.
\begin{theorem}[Theorem \ref{theoremmaincrystalline}]\label{thm:introduction}
	Suppose that $x=(\rho,\underline{\delta})\in Y(U^p,\overline{\rho})$ is a point such that $\rho_v$ is generic crystalline (see \S\ref{sec:companionpointsdesciption} for the generic condition) for all $v\in S_p$. Assume the tame level is sufficiently small and the usual Taylor-Wiles hypothesis (Assmption \ref{ass:taylorwiles}). Then all the companion points of $x$ in the conjecture of Hansen or Breuil appear on $Y(U^p,\overline{\rho})$.
\end{theorem}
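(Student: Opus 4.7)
The plan is to reduce the existence of companion points on $Y(U^p,\overline{\rho})$ to a geometric question about the local structure of the trianguline variety $\trivar$ at $x$, working through the patched eigenvariety framework. Under the Taylor-Wiles hypothesis and the smallness of $U^p$, the patching construction produces a module $M_\infty$ over a complete local ring $R_\infty$ and an associated patched eigenvariety $X_p(\overline{\rho})$ containing a neighbourhood of $x$ in $Y(U^p,\overline{\rho})$ as a closed subspace (after quotienting by the patching variables). A companion point of $x$ on $Y(U^p,\overline{\rho})$ lifts to a companion point on $X_p(\overline{\rho})$ and conversely, so it suffices to produce every predicted companion point there.

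The second ingredient is the local model of $X_p(\overline{\rho})$ at $x$. Combining the results of \cite{breuil2019local} in the regular case with those of \cite{wu2021local} in the non-regular case, $\trivar$ is étale-locally at $x$ a product over $v \in S_p$ of generalized fibre products $\widetilde{\fg}^{P_v} \times_{\fg} \widetilde{\fg}$, where $P_v$ is the standard parabolic of $\mathrm{Res}_{F_v^+/\Q_p}\GL_n$ whose Levi is adapted to the coincidence pattern of the Hodge-Tate weights of $\rho_v$; the passage from $\trivar$ to $X_p(\overline{\rho})$ only introduces smooth factors coming from the potentially crystalline deformation ring and the patching variables. The irreducible components of this local model through $x$ are indexed by cosets $w$ in a quotient $W_P \backslash W$ of the Weyl group of $\prod_{v \in S_p}\mathrm{Res}_{F_v^+/\Q_p}\GL_n$, and the companion points predicted by Hansen and Breuil are in bijection with this set, the algebraic part of $\underline{\delta}'$ being read off from the coset representative and the smooth part from the corresponding refinement of the crystalline Frobenius.

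Granting this local description, the theorem reduces to showing that every irreducible component of the local model actually arises from a component of $X_p(\overline{\rho})$ through $x$. I plan to argue this by induction on the Bruhat order on $W_P \backslash W$. The base case is the component through $x$ itself, which is present by hypothesis. For the inductive step, if the component indexed by $w$ is known to lie on $X_p(\overline{\rho})$, I would use the representation-theoretic input of \cite{wu2021local}: the presence of an appropriate parabolic Verma subquotient in the locally analytic vectors of the completed cohomology attached to that component forces the component indexed by $sw$, for a well-chosen simple reflection $s$, to also be present, which produces the corresponding companion point on moving one edge in the Bruhat graph.

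The main obstacle will be handling the critical refinements intrinsic to the non-regular case, where the Jacquet-functor and adjunction argument of \cite{breuil2019local} does not apply directly because the relevant locally analytic subquotients are built from parabolic Verma modules rather than full Verma modules and can fail the naive non-criticality criterion. To overcome this, I would invoke the cycle-theoretic bounds of \cite{wu2021local}, which estimate from below the multiplicities of components of $M_\infty$ in terms of cycles on the generalized Grothendieck-Springer resolutions $\widetilde{\fg}^{P_v}$, and combine them with the classification of irreducible constituents of Orlik-Strauch type representations with non-regular highest weight. This should make the inductive step go through uniformly in the non-regular setting and yield Theorem \ref{thm:introduction}.
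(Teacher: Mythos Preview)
Your reduction to the patched eigenvariety $X_p(\overline{\rho})$ is correct and matches the paper. The gap is in what you expect the local model to deliver. The local model of $\trivar$ at the point $(\rho_p,\underline{\delta})$, with $\underline{\delta}$ attached to a fixed refinement $\cR$, has irreducible components indexed by $w\geq w_{\cR}$ in $(\cS_n)^{\Sigma_p}/W_{P_p}$; these are exactly the type~(a) companions of the introduction (same $\underline{\delta}_{\mathrm{sm}}$, different $\underline{\delta}_{\mathrm{alg}}$), and their existence is precisely the content of \cite[Thm.~4.10]{wu2021local} which you cite. What the local model at that point does \emph{not} see are the companion points for a different refinement $\cR'$: those live at a different point $(\rho_p,\underline{\delta}')$ with a different smooth part, and there is no Bruhat-order path from $\cR$ to $\cR'$ inside a single local model. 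In the regular case \cite{breuil2019local} bridges refinements via classicality: the dominant point for $\cR$ is a classical automorphic form, and smooth intertwining operators then produce the dominant point for every other $\cR'$. In the non-regular case there is no locally algebraic constituent (as the paper's Remark~1.4 stresses), so this bridge collapses, and nothing in the cycle bounds of \cite{wu2021local} replaces it. Your inductive step ``$w\Rightarrow sw$ via a parabolic Verma subquotient'' therefore stays within a single $W_{\cR}(\rho)$ and never reaches $W_{\cR'}(\rho)$.

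The paper supplies the missing bridge by an approximation argument that is orthogonal to the local-model machinery. Starting from $x$ in $U_{\mathrm{tri}}(\overline{\rho}_p)$ (which one may assume after invoking \cite{wu2021local}), one constructs inside $\cS_n^{\square}(\overline{r})$ a ``partial de~Rham'' locus $\cS^{\square}_{n,(i_0,\mathbf{k}_J)}(\overline{r})$ cut out by the vanishing of a canonical class in $H^1_{\varphi,\gamma_K}(t^{-\mathbf{k}_J}\cR(\delta_{i_0}\delta_{i_0+1}^{-1}))$, shows it is smooth over a suitable slice $\cT^n_{(i_0,\mathbf{k}_J)}$ of weight space, and uses this smoothness to produce a set $Z$ of \emph{regular} generic crystalline points quasi-accumulating at $x$. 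For each $z\in Z$ the full regular result \cite[Thm.~4.2.3]{breuil2019local} gives the companion point $\zeta(z)$ for the adjacent refinement; these $\zeta(z)$ quasi-accumulate at $\zeta(x)$, and since $X_p(\overline{\rho})$ is a union of irreducible components of $\iota(\trivar)\times(\fX_{\overline{\rho}^p}\times\mathbb{U}^g)$, the limit $\zeta(x)$ lies on $X_p(\overline{\rho})$. Iterating over simple transpositions of the refinement yields all type~(b) companions. This deformation-to-regular idea is the new input; your proposal does not contain it.
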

The above theorem was already proved by Breuil-Hellmann-Schraen in \cite{breuil2019local} under the assumption that the Hodge-Tate weights of each $\rho_v$ are regular (i.e. pairwise different). In \cite{wu2021local}, the author removed the regular assumption on the Hodge-Tate weights, but only was able to prove the existence of all companion points of the type (a) above in the non-regular cases. The task of this paper is to find all companion points of type (b) for non-regular points. These are companion points corresponding to different triangulation (refinements) of the trianguline (crystalline) Galois representations.\par
The proof of our theorem is motivated by some arguments in ordinary cases and will use the known results in both regular and non-regular cases. In ordinary cases, modularity lifting theorems were proved for ordinary families of Galois representations that will specialize to companion points with possibly non-regular weights. In our finite slope/trianguline cases, for a non-regular point $x$ as in Theorem \ref{thm:introduction}, the naive strategy is to find a sequence of points $x^i$ on $Y(U^p,\overline{\rho})$ with regular Hodge-Tate weights such that $x=\varinjlim_{i}x^i$ and certain companion points $(x^i)'$ of $x^i$, which will exist on $Y(U^p,\overline{\rho})$ using \cite{breuil2019local}, satisfy that $(x^i)'$ converge to a point $x'$ on $Y(U^p,\overline{\rho})$ and that $x'$ is a companion point of $x$ of type (b). \par
The actual proof is Galois-theoretical. Using patching methods \cite{caraiani2016patching} and the patched eigenvariety \cite{breuil2017interpretation}, we can reduce the task to find those nearby regular $x^i$ to a similar problem on the \emph{trianguline variety} in \cite{breuil2017interpretation}, the local Galois-theoretical eigenvariety. Those $x^i=(\rho^i,\underline{\delta}^i)$ (now $\rho^i$ are representations of local Galois groups) are found by studying some ``crystalline/de Rham'' loci on the moduli space of trianguline $(\varphi,\Gamma)$-modules in the proof of \cite[Thm. 2.6]{breuil2017interpretation} and $\rho^i$ will be the Galois representations corresponding to certain \'etale trianguline $(\varphi,\Gamma)$-modules of parameter $\underline{\delta}^i$ (the \'etaleness will be achieved by the results of Hellmann in \cite{hellmann2016families}). The key example is the case $n=2$.
\begin{remark}
	Our proof for the existence of companion points of type (b) will not use directly the theory of local models of the trianguline variety in \cite{breuil2019local} and \cite{wu2021local}. However, it is the existence of all companion points of type (a) in \cite{wu2021local}, which used the local models, that allows us to keep working in the smooth locus of the trianguline variety consisting of points $(\rho,\underline{\delta})$ where $\rho$ is trianguline of parameter $\underline{\delta}$.	
\end{remark}
\begin{remark}
	As a corollary of Theorem \ref{thm:introduction}, we can determine all the companion constituents (certain locally analytic representations of $\prod_{v\in S_p}\GL_n(F_{v}^+)$) in the Hecke-isotypic part of the completed cohomology of $G$ associated with generic crystalline Galois representations in Breuil's locally analytic socle conjecture (Corollary \ref{cor:socle}). Since there are no locally algebraic constituents in the non-regular cases, the existence of all of these companion constituents could be a replacement in the automorphic side to compare with de Rhamness of the Galois side in the $p$-adic Langlands correspondence in this particular non-regular situation.
\end{remark}
\begin{remark}
	For non-regular weights, the existence of all companion points will not lead directly to a classicality result of Hecke eigensystems in contrast with \cite{buzzard1999companion} since classical automorphic representations for definite unitary groups will have regular weights. The results of this paper might be able to be adapted for Hilbert modular forms and have applications in classicality of $p$-adic Hilbert modular forms with non-regular and possibly non-parallel weights.
\end{remark}
The paper is organized as follows. In \S\ref{sec:coho} and \S\ref{sec:crystallin}, we collect some (presumedly known) results on $(\varphi,\Gamma)$-modules over the Robba rings. In \S\ref{sec:hunting}, we find the companion points on the trianguline variety. In \S\ref{sec:global}, we apply the local results in \S\ref{sec:hunting} to the global settings and prove the main theorem.
\subsection{Notation}
We will use the notation in \cite[\S1.7]{wu2021local}. Let $K$ be a finite extension of $\Q_p$ and $L/\Q_p$ be a large enough coefficient field such that $\Sigma:=\{\tau:K\hookrightarrow L\}$ has size $[K:\Q_p]$. Let $C$ be the completion of an algebraic closure of $K$. We have the Robba ring $\cR_{L,K}$ of $K$ over $L$ defined in \cite[Def. 6.2.1]{kedlaya2014cohomology}. Let $t\in \cR_{L,K}$ denote Fontaine's $2\pi i$ and $t=u\prod_{\tau \in \Sigma}t_{\tau}$ for some $u\in \cR_{L,K}^{\times}$ (see \cite[Not. 6.2.7]{kedlaya2014cohomology} for details). For $\mathbf{k}=(k_{\tau})_{\tau\in\Sigma}\in\Z^{\Sigma}$, write $t^{\mathbf{k}}=\prod_{\tau\in\Sigma}t_{\tau}^{k_{\tau}}$. If $\delta:K^{\times}\rightarrow L^{\times}$ is a continuous character, let $\cR_{L,K}(\delta)$ be the associated rank one $(\varphi,\Gamma_K)$-modules over $\cR_{L,K}$ in \cite[Cons. 6.2.4]{kedlaya2014cohomology} where $\Gamma_K=\Gal(K(\mu_{\infty})/K)$. Then $t^{\mathbf{k}}\cR_{L,K}=\cR_{L,K}(z^{\mathbf{k}})$ where $z^{\mathbf{k}}$ denotes the character $z\mapsto \prod_{\tau\in\Sigma}\tau(z)^{k_{\tau}}$. If $a\in L^{\times}$, then denote by $\mathrm{unr}(a)$ the unramified character of $K^{\times}$ sending a uniformizer of $K$ to $a$. Let $\cT_{L}$ be the rigid space over $L$ parametrizing continuous characters of $K^{\times}$ and $\cT_{0}\subset \cT_{L}$ be the complement of the subset of characters $\delta$ such that $\delta$ or $\epsilon \delta^{-1}$ is algebraic. Here $\epsilon$ is the character $\mathrm{Norm}_{K/\Q_p}|\mathrm{Norm}_{K/\Q_p}|_{\Q_p}$ of $K^{\times}$. We can define $\tau$-part $\wt_{\tau}(\delta)$ of the weight $\wt(\delta)$ of $\delta$ (see \cite[\S1.7.2]{wu2021local}). The cyclotomic character of $\cG_K:=\Gal(\overline{K}/K)$ has Hodge-Tate weights one. We fix an integer $n\geq 2$. 
\section{Cohomology of $(\varphi,\Gamma_K)$-modules}\label{sec:coho}
We collect some results of the cohomology of $(\varphi,\Gamma_K)$-modules (of character type). We fix $\delta:K^{\times}\rightarrow L^{\times}$ to be a continuous character. Recall if $D$ is a $(\varphi,\Gamma_K)$-module over $\cR_{L,K}$, then $H^{i}_{\varphi,\gamma_K}(D[\frac{1}{t}])=\varinjlim_{m\to +\infty}H^{i}_{\varphi,\gamma_K}(t^{-m}D), i=0,1,2$ (\cite[(3.11)]{breuil2019local}).
\begin{lemma}\label{lem:dimensionH1} 
	If $\delta\in \cT_0$, then $\dim_LH^{i}_{\varphi,\gamma_K}(\cR_{L,K}(\delta))=0$ for $i=0,2$ and $\dim_LH^{1}_{\varphi,\gamma_K}(\cR_{L,K}(\delta))=[K:\Q_p]$.
\end{lemma}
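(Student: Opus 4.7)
The statement is the standard cohomological calculation for rank-one $(\varphi,\Gamma_K)$-modules of character type, and I would organize the proof around three pieces: vanishing of $H^0$, vanishing of $H^2$ (via duality), and the Euler--Poincaré formula to read off $\dim_L H^1$.

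First, for $H^0$, I would recall that $H^0_{\varphi,\gamma_K}(\cR_{L,K}(\delta))$ is the space of morphisms $\cR_{L,K}\to \cR_{L,K}(\delta)$ of $(\varphi,\Gamma_K)$-modules. By the classification of morphisms between rank-one $(\varphi,\Gamma_K)$-modules over the Robba ring (e.g.\ via Colmez's computation of $\Hom(\cR_{L,K},\cR_{L,K}(\delta))$, cf.\ also \cite[\S6]{kedlaya2014cohomology}), a nonzero such morphism forces $\delta = z^{-\mathbf{k}}$ for some $\mathbf{k}\in \Z^{\Sigma}_{\geq 0}$; in particular $\delta$ would have to be algebraic. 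Since $\delta\in \cT_0$, this is ruled out, so $H^0_{\varphi,\gamma_K}(\cR_{L,K}(\delta))=0$.

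Next, for $H^2$, I would apply Tate-style duality for $(\varphi,\Gamma_K)$-modules over $\cR_{L,K}$ (Herr's duality in the Robba setting, as extended by Liu/Kedlaya--Pottharst--Xiao), giving a perfect pairing
\[
H^2_{\varphi,\gamma_K}(\cR_{L,K}(\delta))\times H^0_{\varphi,\gamma_K}(\cR_{L,K}(\delta)^{\vee}(\epsilon))\longrightarrow L,
\]
so that $H^2_{\varphi,\gamma_K}(\cR_{L,K}(\delta))\cong H^0_{\varphi,\gamma_K}(\cR_{L,K}(\epsilon\delta^{-1}))^{\vee}$, with $\epsilon=\mathrm{Norm}_{K/\Q_p}|\mathrm{Norm}_{K/\Q_p}|_{\Q_p}$ the normalization fixed in the notation. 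The hypothesis $\delta\in\cT_0$ says precisely that $\epsilon\delta^{-1}$ is also non-algebraic, so the previous paragraph (applied to $\epsilon\delta^{-1}$) yields $H^2=0$.

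Finally, for the dimension of $H^1$, I would invoke the Euler--Poincaré formula for $(\varphi,\Gamma_K)$-modules over $\cR_{L,K}$ (due to Liu, cf.\ also \cite[Thm.\ 4.7]{kedlaya2014cohomology}):
\[
\sum_{i=0}^{2}(-1)^i\dim_L H^i_{\varphi,\gamma_K}(D)\;=\;-[K:\Q_p]\cdot \mathrm{rk}_{\cR_{L,K}}(D).
\]
Applied to $D=\cR_{L,K}(\delta)$ (rank one) and combined with $H^0=H^2=0$, this gives $\dim_L H^1_{\varphi,\gamma_K}(\cR_{L,K}(\delta))=[K:\Q_p]$, as claimed.

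No step is genuinely hard; the only real care is in the bookkeeping of normalizations. The main potential pitfall is checking that the twist appearing in the duality statement is exactly by the character $\epsilon$ used to define $\cT_0$ (so that "$\epsilon\delta^{-1}$ non-algebraic" is exactly what vanishing of the dual $H^0$ requires), and that the non-algebraicity of $\delta$ (respectively $\epsilon\delta^{-1}$) is indeed equivalent to the absence of a non-zero $(\varphi,\Gamma_K)$-equivariant section of $\cR_{L,K}(\delta)$ (respectively $\cR_{L,K}(\epsilon\delta^{-1})$) — both of which are standard after unraveling the weight conventions introduced in \S1.7.
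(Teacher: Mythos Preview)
Your argument is correct: the vanishing of $H^0$ and $H^2$ via the classification of rank-one morphisms and Tate duality, together with the Euler--Poincar\'e formula, is exactly the standard route to this statement. The paper's own proof is simply a one-line citation to \cite[Prop.~6.2.8]{kedlaya2014cohomology}, and what you have written is essentially the content (or proof) of that proposition, so there is no real divergence in approach.
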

\begin{proof}
	\cite[Prop. 6.2.8]{kedlaya2014cohomology}.
\end{proof}
Recall in \cite[\S3.3]{breuil2019local} we have a functor $W_{\mathrm{dR}}$ (resp. $W_{\mathrm{dR}}^+$) sending a $(\varphi,\Gamma_K)$-module over $\cR_{L,K}[\frac{1}{t}]$ (resp. $\cR_{L,K}$) to an $L\otimes_{\Q_p}\mathrm{B}_{\mathrm{dR}}$-representations (resp. $L\otimes_{\Q_p}\mathrm{B}_{\mathrm{dR}}^+$-representation) of $\cG_K$.
\begin{lemma}\label{lem:cohomologyderhamphigamma}
	If $\delta\in \cT_0$ and is locally algebraic, then
	\[H^{1}_{\varphi,\gamma_K}(\cR_{L,K}(\delta)[\frac{1}{t}])\simrightarrow H^1(\cG_K, W_{\mathrm{dR}}(\cR_{L,K}(\delta)[\frac{1}{t}])).\]
\end{lemma}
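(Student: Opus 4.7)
The plan is to reduce to the smooth (hence étale) case by rescaling with $t^{\mathbf{k}}$, and then invoke the standard comparison between $(\varphi,\Gamma_K)$-cohomology and continuous Galois cohomology for étale $(\varphi,\Gamma_K)$-modules.

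Since $\delta$ is locally algebraic, write $\delta=z^{\mathbf{k}}\delta_{\mathrm{sm}}$ with $\mathbf{k}\in\Z^{\Sigma}$ and $\delta_{\mathrm{sm}}:K^{\times}\rightarrow L^{\times}$ smooth. Multiplication by $t^{\mathbf{k}}$ furnishes a $(\varphi,\Gamma_K)$-equivariant isomorphism $\cR_{L,K}(\delta)[\frac{1}{t}]\simrightarrow \cR_{L,K}(\delta_{\mathrm{sm}})[\frac{1}{t}]$ (since $t^{\mathbf{k}}$ is a unit after inverting $t$), and similarly on $W_{\mathrm{dR}}$ (since $t^{\mathbf{k}}\in\BdR^{\times}$). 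So one may assume $\delta=\delta_{\mathrm{sm}}$ is smooth. Then $\cR_{L,K}(\delta)$ is étale and, by Fontaine's equivalence, equals $D^{\dagger}_{\mathrm{rig}}(V_{\delta})$ for the continuous character $V_{\delta}:\cG_K\rightarrow L^{\times}$ attached to $\delta$ via local class field theory; under this identification $W_{\mathrm{dR}}(\cR_{L,K}(\delta)[\frac{1}{t}])\simeq V_{\delta}\otimes_{\Q_p}\BdR$ as $L\otimes_{\Q_p}\BdR$-representations of $\cG_K$.

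Next, I would apply the classical comparison for étale $(\varphi,\Gamma_K)$-modules due to Liu/Pottharst/Kedlaya-Pottharst-Xiao, $H^i_{\varphi,\gamma_K}(D^{\dagger}_{\mathrm{rig}}(V_{\delta}))\simeq H^i(\cG_K,V_{\delta})$, levelwise to the filtration $\cR_{L,K}(\delta)[\frac{1}{t}]=\varinjlim_{m}t^{-m}\cR_{L,K}(\delta)$ matched against the filtration $V_{\delta}\otimes_{\Q_p}\BdR=\varinjlim_{m}V_{\delta}\otimes_{\Q_p}t^{-m}\BdR^+$ on the Galois side, and pass to the direct limit to obtain the desired isomorphism.

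The main obstacle is that for $m>0$ the submodules $t^{-m}\cR_{L,K}(\delta)$ are no longer étale, so the Pottharst/KPX comparison does not apply verbatim. I would circumvent this by a devissage on $m$: the successive quotients $t^{-(m+1)}\cR_{L,K}(\delta)/t^{-m}\cR_{L,K}(\delta)$ on the $(\varphi,\Gamma_K)$-side, and $V_{\delta}\otimes_{\Q_p}(t^{-(m+1)}\BdR^+/t^{-m}\BdR^+)$ on the Galois side, are naturally filtered (via the $\tau$-decomposition recalled in the Notation) with graded pieces that are Tate twists of $V_{\delta}\otimes_{\Q_p}C$, and the classical Tate-Sen vanishing $H^i(\cG_K, C(j))=0$ for $j\neq 0$ applies. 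The hypothesis $\delta\in\cT_0$, via Lemma \ref{lem:dimensionH1}, gives the vanishing of $H^0$ and $H^2$ at the base step $m=0$, and a five-lemma argument then propagates the isomorphism on $H^1$ through the filtration and through the colimit.
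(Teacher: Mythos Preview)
The paper itself gives no argument here; it simply cites \cite[Lem.~3.4.2]{breuil2019local}. Your proposal therefore cannot be compared to any reasoning in the present paper, and your reduction to the smooth case followed by a $t$-adic devissage with Tate--Sen vanishing on the graded pieces is a natural route to reprove that lemma.

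There is, however, a gap at the base step. After reducing to $\delta$ smooth, your five-lemma argument needs the comparison map
\[
H^1_{\varphi,\gamma_K}(\cR_{L,K}(\delta))\longrightarrow H^1\bigl(\cG_K,\, W_{\mathrm{dR}}^+(\cR_{L,K}(\delta))\bigr)=H^1(\cG_K,\, V_\delta\otimes_{\Q_p}\BdR^+)
\]
to be an isomorphism at $m=0$, so that the vanishing on the successive quotients can propagate it through the colimit. But the \'etale comparison you invoke identifies the left side with $H^1(\cG_K,V_\delta)$, not with $H^1(\cG_K,V_\delta\otimes\BdR^+)$; these are different targets, and you never bridge them. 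That bridge is not free: on $H^0$ the analogous statement visibly fails, since $H^0_{\varphi,\gamma_K}(\cR_{L,K}(\delta))=0$ by Lemma~\ref{lem:dimensionH1} while $(V_\delta\otimes\BdR^+)^{\cG_K}$ has $L$-dimension $[K:\Q_p]$ (as $V_\delta$ is de Rham with Hodge--Tate weight $0$). To complete the base case on $H^1$ you must additionally show, for smooth $\delta\in\cT_0$, that $H^1(\cG_K,V_\delta)\to H^1(\cG_K,V_\delta\otimes\BdR)$ is injective --- equivalently, that there are no nontrivial de Rham extensions of the trivial representation by $V_\delta$. Since all Hodge--Tate weights are $0$, de Rham here means potentially unramified, and one can check this group vanishes when $\delta\neq 1$; combined with the dimension count, that would finish the base case. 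This step is absent from your sketch, and invoking only the vanishing of $H^0$ and $H^2$ on the $(\varphi,\Gamma_K)$-side does not supply it.
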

\begin{proof}
	\cite[Lem. 3.4.2]{breuil2019local}
\end{proof}
\begin{proposition}
	For $\mathbf{k}\in \Z_{\geq 0}^{\Sigma},i=0,1$, 
	\[\dim_L H^{i}(t^{-\mathbf{k}}\cR_{L,K}(\delta)/\cR_{L,K}(\delta))=|\{\tau\in\Sigma\mid k_{\tau}\geq 1, \wt_{\tau}(\delta)\in\{1,\cdots,k_{\tau} \} \}|. \]
\end{proposition}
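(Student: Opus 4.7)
The strategy is to transfer the computation to Galois cohomology via the $W_{\mathrm{dR}}^+$-functor, then compute by decomposing along $\Sigma$ and filtering by powers of $t$. Applying $W_{\mathrm{dR}}^+$ to the short exact sequence
\[0\to\cR_{L,K}(\delta)\to t^{-\mathbf{k}}\cR_{L,K}(\delta)\to Q\to 0,\quad Q:=t^{-\mathbf{k}}\cR_{L,K}(\delta)/\cR_{L,K}(\delta),\]
produces a short exact sequence of $L\otimes_{\Q_p}\mathrm{B}_{\mathrm{dR}}^+$-modules, and I expect a torsion analogue of Lemma \ref{lem:cohomologyderhamphigamma}: an isomorphism $H^i_{\varphi,\Gamma_K}(Q)\simeq H^i(\cG_K,\bar W)$, where $\bar W:=W_{\mathrm{dR}}^+(t^{-\mathbf{k}}\cR_{L,K}(\delta))/W_{\mathrm{dR}}^+(\cR_{L,K}(\delta))$ is a finite-length torsion $L\otimes_{\Q_p}\mathrm{B}_{\mathrm{dR}}^+$-module. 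I would establish this identification by a direct Herr complex computation in the same spirit as the proof of \cite[Lem.~3.4.2]{breuil2019local}; this is the main technical hurdle of the proof.

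Granting the identification, the Galois cohomology of $\bar W$ can be computed explicitly. Decomposing $L\otimes_{\Q_p}\mathrm{B}_{\mathrm{dR}}^+=\prod_{\tau\in\Sigma}\mathrm{B}_{\mathrm{dR},\tau}^+$ gives $\bar W=\bigoplus_{\tau\in\Sigma}\bar W_\tau$, where each $\bar W_\tau$ is a length-$k_\tau$ torsion $\mathrm{B}_{\mathrm{dR},\tau}^+$-module whose $t$-adic filtration has successive graded pieces $C_\tau(\wt_\tau(\delta)-j)$ for $j=1,\ldots,k_\tau$, with $C_\tau:=C\otimes_{K,\tau}L$ the $\tau$-component of $L\otimes_{\Q_p}C$ and the Tate twist given by the cyclotomic character.

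By Tate--Sen theory, $H^i(\cG_K,C_\tau(m))$ is $1$-dimensional over $L$ in degrees $i=0,1$ when $m=0$, vanishes otherwise, and $H^2(\cG_K,C_\tau(m))=0$ in all cases. Hence within each $\bar W_\tau$ at most one graded piece contributes to cohomology, namely the one indexed by $j=\wt_\tau(\delta)$, which lies in $\{1,\ldots,k_\tau\}$ precisely when $\wt_\tau(\delta)\in\{1,\ldots,k_\tau\}$. Since at most one graded piece per $\tau$-summand is non-trivial, the spectral sequence for the $t$-adic filtration of $\bar W_\tau$ degenerates at $E_1$ for purely dimensional reasons, giving $\dim_L H^i(\cG_K,\bar W_\tau)=1$ in degrees $0,1$ exactly when $\wt_\tau(\delta)\in\{1,\ldots,k_\tau\}$ and $0$ otherwise. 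Summing over $\tau\in\Sigma$ reproduces the stated formula; the hard part remains the first step, after which everything is a standard devissage plus Tate--Sen.
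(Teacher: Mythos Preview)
Your approach is correct and complete modulo the comparison isomorphism you flag as the main hurdle; that comparison for $t$-power torsion $(\varphi,\Gamma_K)$-modules is indeed known and is essentially the content of \cite[Thm.~5.11]{nakamura2014deformations} (and already implicit in \cite[Prop.~2.2]{nakamura2009classification} together with \cite[Appendix A]{kedlaya2009some}), which the paper itself cites. Once this is in hand, your Tate--Sen d\'evissage is correct: the filtration has at most one graded piece with nonvanishing cohomology, so the long exact sequences collapse exactly as you say.

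The paper does not spell out an argument but points to \cite[Lem.~2.16]{nakamura2009classification} as the primary input. Nakamura's lemma stays entirely on the $(\varphi,\Gamma_K)$-module side: one decomposes $Q=\bigoplus_\tau t_\tau^{-k_\tau}\cR_{L,K}(\delta)/\cR_{L,K}(\delta)$ and filters each summand by powers of $t_\tau$, obtaining graded pieces of the form $(\cR_{L,K}/t_\tau)(\delta z^{-je_\tau})$, whose Herr cohomology is computed directly. This avoids your comparison step entirely at the cost of an explicit local computation at the zeros of $t_\tau$. Your route is more conceptual and makes the appearance of Tate--Sen transparent, but it front-loads the work into the comparison theorem; Nakamura's route is more self-contained but hides the Hodge-theoretic meaning. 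Either way the d\'evissage structure is identical, and the paper's parenthetical list of references in fact covers both routes.

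One small point of care: when $\wt_\tau(\delta)\notin\Z$ your notation $C_\tau(\wt_\tau(\delta)-j)$ should be read as the $C$-semilinear $\cG_K$-representation with the indicated Sen weight rather than a literal Tate twist; the Tate--Sen vanishing you invoke still applies in that generality, so the argument goes through unchanged.
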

\begin{proof}
	This follows from \cite[Appendix A]{kedlaya2009some} and \cite[Lem. 2.16]{nakamura2009classification} (and some other well-known results: \cite[Thm.4.7, Cor. 4.8]{liu2007cohomology} and the comparison in \cite[Prop. 2.2]{nakamura2009classification}, or a generalized version \cite[Thm. 5.11]{nakamura2014deformations}).
\end{proof}
\begin{corollary}\label{cor:dimensionker}
	For $\mathbf{k}\in \Z_{\geq 0}^{\Sigma}$, and $\delta\in\cT_0$, then 
	\begin{align*}
		&\dim_L\mathrm{Ker}\left(H^{1}_{\varphi,\gamma_K}(\cR_{L,K}(\delta))\rightarrow H^{1}_{\varphi,\gamma_K}(t^{-\mathbf{k}}\cR_{L,K}(\delta))\right)\\
		=&\dim_L\mathrm{Coker}\left(H^{1}_{\varphi,\gamma_K}(\cR_{L,K}(\delta))\rightarrow H^{1}_{\varphi,\gamma_K}(t^{-\mathbf{k}}\cR_{L,K}(\delta))\right)\\
		=&|\{\tau\in\Sigma\mid k_{\tau}\geq 1, \wt_{\tau}(\delta)\in\{1_{\tau},\cdots,k_{\tau}\} \}|.
	\end{align*}
\end{corollary}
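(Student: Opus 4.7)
The plan is to run the long exact sequence of $(\varphi,\Gamma_K)$-cohomology attached to the short exact sequence
\begin{equation*}
0\to \cR_{L,K}(\delta)\to t^{-\mathbf{k}}\cR_{L,K}(\delta)\to t^{-\mathbf{k}}\cR_{L,K}(\delta)/\cR_{L,K}(\delta)\to 0,
\end{equation*}
and then read off the kernel and cokernel in terms of $H^0$ and $H^1$ of the torsion quotient, which have already been computed in the previous proposition.

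First I would observe that $t^{-\mathbf{k}}\cR_{L,K}(\delta)\simeq \cR_{L,K}(z^{-\mathbf{k}}\delta)$, and that the hypothesis $\delta\in\cT_0$ is preserved under twisting by the algebraic character $z^{-\mathbf{k}}$: neither $z^{-\mathbf{k}}\delta$ nor $\epsilon z^{\mathbf{k}}\delta^{-1}$ is algebraic, since $\delta$ and $\epsilon\delta^{-1}$ are not. Applying Lemma \ref{lem:dimensionH1} to both $\cR_{L,K}(\delta)$ and $\cR_{L,K}(z^{-\mathbf{k}}\delta)$ then kills the $H^0$ and $H^2$ terms on either end of the long exact sequence, leaving the four-term exact sequence
\begin{equation*}
0\to H^{0}_{\varphi,\gamma_K}(t^{-\mathbf{k}}\cR_{L,K}(\delta)/\cR_{L,K}(\delta))\to H^{1}_{\varphi,\gamma_K}(\cR_{L,K}(\delta))\to H^{1}_{\varphi,\gamma_K}(t^{-\mathbf{k}}\cR_{L,K}(\delta))\to H^{1}_{\varphi,\gamma_K}(t^{-\mathbf{k}}\cR_{L,K}(\delta)/\cR_{L,K}(\delta))\to 0.
\end{equation*}

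From this exact sequence I read off that the kernel of the map in question has dimension $\dim_L H^{0}_{\varphi,\gamma_K}(t^{-\mathbf{k}}\cR_{L,K}(\delta)/\cR_{L,K}(\delta))$ and its cokernel has dimension $\dim_L H^{1}_{\varphi,\gamma_K}(t^{-\mathbf{k}}\cR_{L,K}(\delta)/\cR_{L,K}(\delta))$. The previous proposition gives both of these dimensions as the same cardinality $|\{\tau\in\Sigma\mid k_{\tau}\geq 1,\ \wt_{\tau}(\delta)\in\{1,\dots,k_{\tau}\}\}|$, which yields the claim.

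No real obstacle is expected here: the only subtle points are the bookkeeping that both endpoints $\cR_{L,K}(\delta)$ and $t^{-\mathbf{k}}\cR_{L,K}(\delta)$ satisfy the hypothesis of Lemma \ref{lem:dimensionH1}, and the equality (rather than just an Euler characteristic relation) between $\dim H^0$ and $\dim H^1$ of the quotient, which is already supplied by the proposition. One could cross-check the result by an Euler characteristic computation: the Euler characteristic of $t^{-\mathbf{k}}\cR_{L,K}(\delta)/\cR_{L,K}(\delta)$ vanishes since the two character modules have the same rank one, consistent with the two dimensions being equal.
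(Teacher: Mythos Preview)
Your argument is correct and is exactly the intended one: the paper states this as a corollary with no separate proof, the implicit reasoning being the long exact sequence together with Lemma \ref{lem:dimensionH1} (applied to $\delta$ and to $z^{-\mathbf{k}}\delta$, both in $\cT_0$) and the preceding proposition. There is nothing to add.
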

\begin{corollary}\label{cor:cohomologyiso}
	If $\delta\in \cT_0$ is locally algebraic and $\wt_{\tau}(\delta)\leq 0$ for all $\tau\in\Sigma$, then the natural maps $\cR_{L,K}(\delta)\hookrightarrow t^{-\mathbf{k}}\cR_{L,K}(\delta)$ induce isomorphisms $H^{i}_{\varphi,\gamma_K}(\cR_{L,K}(\delta))\simrightarrow H^{i}_{\varphi,\gamma_K}(t^{-\mathbf{k}}\cR_{L,K}(\delta))$ for all $i=0,1,2, \mathbf{k}\in \Z_{\geq 0}^{\Sigma}$.
\end{corollary}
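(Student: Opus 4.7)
The plan is to compare the two rank-one $(\varphi,\Gamma_K)$-modules via the short exact sequence
\begin{equation*}
0 \longrightarrow \cR_{L,K}(\delta) \longrightarrow t^{-\mathbf{k}}\cR_{L,K}(\delta) \longrightarrow Q \longrightarrow 0,
\end{equation*}
where $Q := t^{-\mathbf{k}}\cR_{L,K}(\delta)/\cR_{L,K}(\delta)$, and to read off the needed isomorphisms from the associated long exact sequence.

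First I would handle the outer term: by the preceding proposition, $\dim_L H^i(Q)$ equals the cardinality of $\{\tau \in \Sigma \mid k_\tau \geq 1,\ \wt_\tau(\delta)\in\{1,\dots,k_\tau\}\}$ for $i=0,1$. Under the hypothesis $\wt_\tau(\delta)\leq 0$ for every $\tau\in\Sigma$, the condition $\wt_\tau(\delta)\in\{1,\dots,k_\tau\}$ is never satisfied, so $H^0(Q) = H^1(Q) = 0$.

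Next I would identify $t^{-\mathbf{k}}\cR_{L,K}(\delta)$ with $\cR_{L,K}(z^{-\mathbf{k}}\delta)$ and observe that $z^{-\mathbf{k}}\delta\in\cT_0$: neither $z^{-\mathbf{k}}\delta$ nor $\epsilon(z^{-\mathbf{k}}\delta)^{-1} = \epsilon z^{\mathbf{k}}\delta^{-1}$ is algebraic, since multiplying by the algebraic character $z^{\pm\mathbf{k}}$ does not change the property of being algebraic, and $\delta\in\cT_0$ by assumption. Lemma \ref{lem:dimensionH1} then applies to both $\cR_{L,K}(\delta)$ and $t^{-\mathbf{k}}\cR_{L,K}(\delta)$, giving that $H^0$ and $H^2$ vanish on both sides and that each $H^1$ is of dimension $[K:\Q_p]$.

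Finally I would combine these two inputs in the long exact sequence. The degree $0$ and $2$ isomorphisms are the trivial $0 = 0$. The vanishing of $H^0(Q)$ makes $H^1_{\varphi,\gamma_K}(\cR_{L,K}(\delta)) \to H^1_{\varphi,\gamma_K}(t^{-\mathbf{k}}\cR_{L,K}(\delta))$ injective, and the vanishing of $H^1(Q)$ together with the vanishing of $H^2_{\varphi,\gamma_K}(\cR_{L,K}(\delta))$ makes it surjective; equality of dimensions (both $[K:\Q_p]$) then forces an isomorphism. There is no real obstacle here beyond the two routine verifications above; the only point that requires a moment's care is confirming that $z^{-\mathbf{k}}\delta$ remains in $\cT_0$, which is why both applications of Lemma \ref{lem:dimensionH1} are legitimate.
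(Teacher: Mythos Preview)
Your proof is correct and follows essentially the same route as the paper: the corollary is stated without proof there, being an immediate consequence of the preceding proposition on $H^i$ of the quotient (equivalently, of Corollary~\ref{cor:dimensionker}) together with Lemma~\ref{lem:dimensionH1}, exactly as you lay out via the long exact sequence. Two cosmetic remarks: the vanishing of $H^1(Q)$ alone already gives surjectivity of the $H^1$-map (the $H^2$ vanishing is not needed for that step), and once you have both injectivity and surjectivity the dimension count is redundant; but neither affects the validity of the argument.
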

If $\mathbf{k}\in\Z^{\Sigma}$, write $\mathbf{k}^{\sharp}\in \Z^{\Sigma}$ where $k_{\tau}^{\sharp}=k_{\tau}$ if $k_{\tau}\geq 1$ and $k_{\tau}^{\sharp}=0$ otherwise. 
\begin{proposition}\label{prop:kernelandderham}
	Assume that $\delta\in \cT_0$ is locally algebraic with weights $\mathbf{k}\in \Z^{\Sigma}$. Then the image of an element $x\in H^{1}_{\varphi,\gamma_K}(\cR_{L,K}(\delta))$ in $H^1_{\varphi,\gamma_K}(\cR_{L,K}(\delta)[\frac{1}{t}])$ is $0$ if and only if
	\[x\in\mathrm{Ker}\left(H^{1}_{\varphi,\gamma_K}(\cR_{L,K}(\delta))\rightarrow H^{1}_{\varphi,\gamma_K}(t^{-\mathbf{k}^{\sharp}}\cR_{L,K}(\delta))\right). \]
\end{proposition}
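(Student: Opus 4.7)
The plan is to compare kernels of two different specialization maps and show that they have the same dimension (so coincide), using Corollary~\ref{cor:dimensionker} as the main input.

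\medskip

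\noindent\textbf{Setup and easy direction.} Since $t^{-\mathbf{k}^{\sharp}}\cR_{L,K}(\delta)\hookrightarrow \cR_{L,K}(\delta)[\frac{1}{t}]$, any $x$ that already dies in $H^1_{\varphi,\gamma_K}(t^{-\mathbf{k}^{\sharp}}\cR_{L,K}(\delta))$ obviously dies in $H^1_{\varphi,\gamma_K}(\cR_{L,K}(\delta)[\frac{1}{t}])$. So the nontrivial direction is the converse: if $x$ maps to $0$ in $H^1_{\varphi,\gamma_K}(\cR_{L,K}(\delta)[\frac{1}{t}])$, then $x$ already maps to $0$ in $H^1_{\varphi,\gamma_K}(t^{-\mathbf{k}^{\sharp}}\cR_{L,K}(\delta))$.

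\medskip

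\noindent\textbf{Main idea.} Using the identification $H^1_{\varphi,\gamma_K}(\cR_{L,K}(\delta)[\frac{1}{t}])=\varinjlim_m H^1_{\varphi,\gamma_K}(t^{-m}\cR_{L,K}(\delta))$ recalled at the start of \S\ref{sec:coho}, the hypothesis $x\mapsto 0$ in $H^1_{\varphi,\gamma_K}(\cR_{L,K}(\delta)[\frac{1}{t}])$ means $x\mapsto 0$ in $H^1_{\varphi,\gamma_K}(t^{-m}\cR_{L,K}(\delta))$ for some integer $m$ large enough, in particular $m\ge k_\tau^{\sharp}$ for every $\tau\in\Sigma$. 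For such $m$, the factorization $\cR_{L,K}(\delta)\hookrightarrow t^{-\mathbf{k}^{\sharp}}\cR_{L,K}(\delta)\hookrightarrow t^{-m}\cR_{L,K}(\delta)$ gives an inclusion of kernels
\[
K_{\sharp}:=\ker\!\bigl(H^1_{\varphi,\gamma_K}(\cR_{L,K}(\delta))\to H^1_{\varphi,\gamma_K}(t^{-\mathbf{k}^{\sharp}}\cR_{L,K}(\delta))\bigr)\subseteq K_m:=\ker\!\bigl(H^1_{\varphi,\gamma_K}(\cR_{L,K}(\delta))\to H^1_{\varphi,\gamma_K}(t^{-m}\cR_{L,K}(\delta))\bigr).
\]
So it suffices to show $\dim_L K_\sharp=\dim_L K_m$, which by equality of finite dimensions forces $K_\sharp=K_m$ and hence $x\in K_\sharp$.

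\medskip

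\noindent\textbf{The dimension count.} Apply Corollary~\ref{cor:dimensionker} to each kernel. Since $\delta$ has weight $\mathbf{k}$, the condition $\wt_\tau(\delta)\in\{1,\ldots,k^{\sharp}_\tau\}$ (with $k^{\sharp}_\tau\geq 1$) amounts exactly to $k_\tau\geq 1$, so
\[
\dim_L K_\sharp=|\{\tau\in\Sigma\mid k_\tau\geq 1\}|.
\]
On the other hand, since $m\ge k_\tau$ for all $\tau$ with $k_\tau\geq 1$ (and $m\geq 1$), the condition $\wt_\tau(\delta)=k_\tau\in\{1,\ldots,m\}$ also amounts exactly to $k_\tau\ge 1$, so
\[
\dim_L K_m=|\{\tau\in\Sigma\mid k_\tau\geq 1\}|.
\]
The two dimensions coincide, hence $K_\sharp=K_m$, which finishes the proof.

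\medskip

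\noindent\textbf{Expected difficulty.} There is no serious obstacle once Corollary~\ref{cor:dimensionker} is in hand; the only point that requires care is the bookkeeping of the indexing sets $\{\tau\mid k_\tau^{\sharp}\geq 1,\ \wt_\tau(\delta)\in\{1,\ldots,k_\tau^{\sharp}\}\}$ versus $\{\tau\mid m\geq 1,\ \wt_\tau(\delta)\in\{1,\ldots,m\}\}$, and the verification that one can indeed take a common cofinal $m$ (i.e. a scalar-valued weight $\mathbf{m}=m\cdot\mathbf{1}$) in the direct limit defining $\cR_{L,K}(\delta)[\frac{1}{t}]$ that dominates $\mathbf{k}^{\sharp}$ coordinatewise.
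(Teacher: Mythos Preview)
Your proof is correct. The approach is close in spirit to the paper's but packaged slightly differently: the paper observes that $\wt_\tau(\delta z^{-\mathbf{k}^{\sharp}})\leq 0$ for all $\tau$, and then invokes Corollary~\ref{cor:cohomologyiso} to conclude directly that
\[
H^{1}_{\varphi,\gamma_K}(t^{-\mathbf{k}^{\sharp}}\cR_{L,K}(\delta))\;\simrightarrow\; H^{1}_{\varphi,\gamma_K}(\cR_{L,K}(\delta)[\tfrac{1}{t}]),
\]
from which the proposition is immediate. You instead bypass Corollary~\ref{cor:cohomologyiso} and go back to Corollary~\ref{cor:dimensionker} to compare $\dim_L K_\sharp$ and $\dim_L K_m$ for a sufficiently large scalar $m$. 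Since Corollary~\ref{cor:cohomologyiso} is itself just the special case of Corollary~\ref{cor:dimensionker} where the counting set is empty, the two arguments are essentially unwinding the same computation; the paper's version has the small advantage of producing the stronger statement that nothing further is lost after level $\mathbf{k}^{\sharp}$, whereas yours only compares two specific kernels inside $H^{1}_{\varphi,\gamma_K}(\cR_{L,K}(\delta))$.
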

\begin{proof}
	We have $\wt_{\tau}(\delta z^{-\mathbf{k}^{\sharp}})\leq 0$ for all $\tau\in \Sigma$. Thus by Corollary \ref{cor:cohomologyiso}, $H^{1}_{\varphi,\gamma_K}(t^{-\mathbf{k}^{\sharp}}\cR_{L,K}(\delta))\simeq H^{1}_{\varphi,\gamma_K}(\cR_{L,K}(\delta)[\frac{1}{t}])$. 
\end{proof}
\section{A crystalline criterion}\label{sec:crystallin}
We need some criterion to guarantee that the points on the trianguline variety we will find in the next section are crystalline. For the definition of de Rham or crystalline $(\varphi,\Gamma_K)$-modules, see \cite[Def. 2.5]{hellmann2016density}. We say a trianguline $(\varphi,\Gamma_K)$-module of parameter $\underline{\delta}=(\delta_1,\cdots,\delta_n)$ is generic if $\delta_i\delta_j^{-1}\in\cT_0$ for all $i\neq j$ (or $\underline{\delta}\in\cT_0^n$ in the notation of \cite[\S3.2]{wu2021local}, remark that $\cT_0^n\neq (\cT_0)^n$!). Recall that a locally algebraic character $\delta:K^{\times}\rightarrow L^{\times}$ is crystalline (or semi-stable) if and only if the smooth part $\delta_{\mathrm{sm}}$ is unramified (see \cite[Exam. 6.2.6]{kedlaya2014cohomology}).
\begin{lemma}\label{lem:crystalline}
	If $D$ is a generic trianguline $(\varphi,\Gamma_K)$-module over $\cR_{L,K}$ of parameter $\underline{\delta}$ such that all $\delta_i$ are crystalline, then $D$ is a crystalline $(\varphi,\Gamma_K)$-module if and only if $D$ is de Rham.
\end{lemma}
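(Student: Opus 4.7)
The forward direction is immediate from the definitions. For the converse, assume $D$ is de Rham. The plan is to invoke the $p$-adic monodromy theorem for $(\varphi,\Gamma_K)$-modules over the Robba ring, which guarantees that $D$ is potentially semi-stable. So there is a finite Galois extension $K'/K$ such that $D|_{K'}$ is semi-stable, and $D_{\mathrm{pst}}(D):=D_{\mathrm{st}}(D|_{K'})$ is a free module of rank $n$ over $L\otimes_{\Q_p}K_0'$ (with $K_0'$ the maximal unramified subfield of $K'$) equipped with a semi-linear Frobenius $\phi$, a linear monodromy $N$ satisfying $N\phi=p\phi N$, and a commuting $\Gal(K'/K)$-action.

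The first step is to promote this to semi-stability over $K$, i.e., to show that the inertia $I_{K'/K}\subset \Gal(K'/K)$ acts trivially on $D_{\mathrm{pst}}(D)$. The triangulation $0=D_0\subset D_1\subset\cdots\subset D_n=D$ induces a $\phi$- and $\Gal(K'/K)$-stable filtration of $D_{\mathrm{pst}}(D)$ whose graded pieces are $D_{\mathrm{pst}}(\cR_{L,K}(\delta_i))=D_{\mathrm{cris}}(\cR_{L,K}(\delta_i))$, since each $\delta_i$ is crystalline. As $I_{K'/K}$ acts trivially on every graded piece, its image in $\mathrm{Aut}(D_{\mathrm{pst}}(D))$ consists of unipotent elements; being a finite subgroup of a unipotent algebraic group in characteristic zero, this image is trivial. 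Therefore $D$ is semi-stable over $K$, and $D_{\mathrm{st}}(D)$ is a free $L\otimes_{\Q_p}K_0$-module of rank $n$ (with $K_0$ the maximal unramified subfield of $K$) carrying the same structures and filtration.

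To finish, I would show $N=0$ on $D_{\mathrm{st}}(D)$. Set $f=[K_0:\Q_p]$ and $q=p^f$. Then $\phi^f$ is $L\otimes K_0$-linear, preserves the triangulation-induced filtration, and acts on each graded piece $D_{\mathrm{cris}}(\cR_{L,K}(\delta_i))$ by the scalar $\alpha_i:=\delta_{i,\mathrm{sm}}(\varpi_K)$ (in the standard normalization). Choosing a basis adapted to the filtration, $\phi^f$ is upper triangular with diagonal $(\alpha_1,\dots,\alpha_n)$, and the identity $N\phi^f=q\phi^f N$, combined with a short induction on the filtration indices, reduces $N=0$ to the inequalities $\alpha_j\ne q\alpha_i$ for all $i,j$. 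Unpacking the genericity hypothesis via the definition of $\cT_0$ and the crystallinity criterion recalled just before the lemma (a crystalline character with smooth part $\mathrm{unr}(\alpha)$ lies in $\cT_0$ iff $\alpha\notin\{1,q^{-1}\}$), the condition $\delta_i\delta_j^{-1}\in\cT_0$ for all $i\ne j$ is equivalent to $\alpha_i/\alpha_j\notin\{1,q^{\pm1}\}$, which yields the required inequalities. Hence $N=0$, so $D$ is crystalline.

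The main obstacle is the precise translation of the genericity hypothesis into the eigenvalue inequalities $\alpha_i/\alpha_j\ne q^{\pm1}$, which depends on pinning down the Frobenius normalization on $D_{\mathrm{cris}}(\cR_{L,K}(\delta_i))$ in terms of $\delta_{i,\mathrm{sm}}(\varpi_K)$ and computing the smooth part of $\epsilon\delta^{-1}$ at $\varpi_K$ to be $q^{-1}\alpha^{-1}$. The other ingredients --- the $p$-adic monodromy theorem, the unipotent-of-finite-order step for descent to semi-stability over $K$, and the monodromy matrix computation --- are essentially formal once this dictionary is set.
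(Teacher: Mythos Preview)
Your proof is correct and follows essentially the same route as the paper's: first pass from de Rham to semi-stable using Berger's $p$-adic monodromy theorem together with the fact that $D$ is an iterated extension of crystalline rank-one modules, then kill $N$ using the generic hypothesis. The paper compresses both steps into two sentences with references (to \cite{berger2008equations} and \cite[\S6.1]{berger2002representations} for semi-stability, and a bare assertion that ``the monodromy must be trivial'' from genericity), whereas you have written out in full the unipotent-inertia descent and the upper-triangular computation showing $N=0$ from $\alpha_i/\alpha_j\notin\{1,q^{\pm1}\}$; your unpacking of $\cT_0$ into these eigenvalue inequalities is also correct.
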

\begin{proof}
	This follows from the proof of \cite[Cor. 2.7(i)]{hellmann2016density}. Assume $D$ is de Rham. As $D$ is a successive extension of crystalline $(\varphi,\Gamma_K)$-modules, $D$ is semi-stable (by \cite{berger2008equations}, see also the arguments in \cite[\S6.1]{berger2002representations}). By the generic assumption, the monodromy must be trivial. Hence $D$ is crystalline.
\end{proof}
\begin{lemma}
	Let $D$ be a trianguline $(\varphi,\Gamma_K)$-module over $\cR_{L,K}$ of rank $n$ with the trianguline filtration $\Fil_{\bullet}D$ such that $\Fil_{i}D/\Fil_{i-1}D\simeq \cR_{L,K}(\delta_i)$ for $i=1,\cdots,n$. Fix $i_0\in\{1,\cdots,n-1\}$ and let $D_0=\Fil_{i_0}D$ and $D_1=D/\Fil_{i_0}D$. Assume that $\underline{\delta}$ is locally algebraic and let $\lambda=(\lambda_{\tau,i})_{\tau\in\Sigma,i=1,\cdots,n}=\wt(\underline{\delta})\in (\Z^{\Sigma})^n$. Assume that for every $\tau\in\Sigma$, $\lambda_{\tau,i}< \lambda_{\tau,j}$ if $i>i_0\geq j$ and that both $D_0$ and $D_1$ are de Rham, then $D$ is de Rham.
\end{lemma}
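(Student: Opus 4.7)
My plan is to reduce the de-Rhamness of $D$ to the vanishing of the natural map
\[
H^1_{\varphi,\gamma_K}(\Hom(D_1, D_0)) \longrightarrow H^1_{\varphi,\gamma_K}(\Hom(D_1, D_0)[1/t]).
\]
More precisely, the extension class $[D]$ lives in the source, its image in the target is (via an extension of Lemma \ref{lem:cohomologyderhamphigamma} to higher rank) the class $[W_{\mathrm{dR}}(D)] \in \Ext^1_{\cG_K}(W_{\mathrm{dR}}(D_1), W_{\mathrm{dR}}(D_0))$, and its vanishing says $W_{\mathrm{dR}}(D)$ splits as a $B_{\mathrm{dR}}$-representation. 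Since $W_{\mathrm{dR}}(D_0)$ and $W_{\mathrm{dR}}(D_1)$ are already de Rham and the weight hypothesis makes every Hodge--Tate weight of $D_0$ strictly greater than every one of $D_1$ at each $\tau$, any extension of filtered $K$-vector spaces of $D_{\mathrm{dR}}(D_1)$ by $D_{\mathrm{dR}}(D_0)$ is automatically split ($K$-linear sections preserve the filtrations), so the split $B_{\mathrm{dR}}$-linear extension is the de Rham one, and $D$ is de Rham.

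To establish the vanishing, I exploit the natural filtration on $M := \Hom(D_1, D_0) = D_0 \otimes D_1^\vee$ induced by the trianguline filtrations of $D_0$ and $D_1$; its graded pieces are the rank-one modules $\cR_{L,K}(\delta_j \delta_i^{-1})$ indexed by pairs $(j, i)$ with $j \leq i_0 < i$. The weight hypothesis gives $\wt_\tau(\delta_j \delta_i^{-1}) = \lambda_{\tau,j} - \lambda_{\tau,i} \geq 1$ for every $\tau \in \Sigma$. Taking $\mathbf{k} = \mathbf{k}^{\sharp} = \wt(\delta_j \delta_i^{-1})$ in Corollary \ref{cor:dimensionker} gives a kernel of $L$-dimension $|\Sigma| = [K:\Q_p]$ for $H^1(\cR_{L,K}(\delta_j \delta_i^{-1})) \to H^1(t^{-\mathbf{k}^{\sharp}}\cR_{L,K}(\delta_j \delta_i^{-1}))$; since by Lemma \ref{lem:dimensionH1} this matches the full dimension of $H^1(\cR_{L,K}(\delta_j \delta_i^{-1}))$, Proposition \ref{prop:kernelandderham} implies that on every graded piece the map $H^1 \to H^1[1/t]$ is the zero map.

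I then propagate the vanishing to all of $M$ by induction on the filtration length. The genericity hypothesis $\delta_j\delta_i^{-1}\in\cT_0$ ensures $H^0 = H^2 = 0$ on each rank-one piece, and a standard colimit argument extends this to the $[1/t]$-versions. Consequently the long exact sequences attached to a splitting $0 \to M_{r-1} \to M \to \cR_{L,K}(\delta_{j_r}\delta_{i_r}^{-1}) \to 0$ (and its $[1/t]$-counterpart) degenerate into compatible short exact sequences of $H^1$'s, and a five-lemma chase using the inductive hypothesis on $M_{r-1}$ together with the base case on the final quotient yields the desired vanishing on $M$.

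The main obstacle is this devissage step. A direct chase only shows that the image of $H^1(M) \to H^1(M[1/t])$ lies inside the subspace coming from $H^1(M_{r-1}[1/t])$; it does not force vanishing, since the induced map $H^1(\cR_{L,K}(\delta_{j_r}\delta_{i_r}^{-1})) \to H^1(M_{r-1}[1/t])$ could a priori be nontrivial. The critical extra input is that $M$ itself is de Rham (as the tensor product of the de Rham modules $D_0$ and $D_1^\vee$): this forces $\dim_L H^0(M[1/t]/M) = (\mathrm{rank}\,M) \cdot [K:\Q_p]$, equal to $\dim_L H^1(M)$, so the boundary map $H^0(M[1/t]/M) \to H^1(M)$ in the long exact sequence of $0 \to M \to M[1/t] \to M[1/t]/M \to 0$ is surjective---which is exactly the vanishing we need.
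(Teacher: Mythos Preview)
Your argument has a genuine gap: you repeatedly invoke a ``genericity hypothesis'' $\delta_j\delta_i^{-1}\in\cT_0$ (for $j\leq i_0<i$) that is \emph{not} part of the lemma. The statement only assumes that $\underline{\delta}$ is locally algebraic, that the weight inequalities hold, and that $D_0,D_1$ are de Rham; nothing prevents some $\delta_j\delta_i^{-1}$ from being algebraic (for instance if the smooth parts of $\delta_j$ and $\delta_i$ coincide). Without $\delta_j\delta_i^{-1}\in\cT_0$ you cannot apply Lemma~\ref{lem:dimensionH1} or Corollary~\ref{cor:dimensionker}, so your second paragraph collapses, and in your third and fourth paragraphs the equalities $\dim_L H^1(M)=(\mathrm{rank}\,M)[K:\Q_p]$ and $H^0(M)=H^0(M[1/t])=0$ (needed for injectivity, hence surjectivity, of the boundary map) are no longer available. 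Your final ``critical extra input'' that $M$ is de Rham is true, but you give no argument linking de-Rhamness of $M$ to the value of $\dim_L H^0_{\varphi,\gamma_K}(M[1/t]/M)$; these are different invariants, and even if the dimension count held, surjectivity of the boundary map still needs the vanishing of $H^0(M[1/t])$, which again rests on the missing genericity.

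The paper's proof sidesteps all of this by working on the $W_{\mathrm{dR}}^+$ side rather than through $(\varphi,\Gamma)$-cohomology of $\Hom(D_1,D_0)$. One sets $k_\tau=\max_{i>i_0}\lambda_{\tau,i}$, so that $t^{-\mathbf{k}}D_0$ has all Hodge--Tate weights $\geq 1$ and $t^{-\mathbf{k}}D_1$ has all weights $\leq 0$. The single cohomological input is then $H^1(\cG_K,W_{\mathrm{dR}}^+(t^{-\mathbf{k}}D_0))=0$ (from $H^1(\cG_K,C(i))=0$ for $i\neq 0$), which yields $\dim_L W_{\mathrm{dR}}^+(t^{-\mathbf{k}}D)^{\cG_K}=(n-i_0)[K:\Q_p]$; combined with $\dim_L W_{\mathrm{dR}}(D_0)^{\cG_K}=i_0[K:\Q_p]$ and the observation that these two subspaces of $W_{\mathrm{dR}}(D)^{\cG_K}$ meet trivially, one obtains the required $n[K:\Q_p]$ invariants. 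This uses only the weight separation and the de-Rhamness of $D_0$ and $D_1$ --- no genericity and no d\'evissage on $\Hom(D_1,D_0)$.
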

\begin{proof}
	This is a generalization of \cite[Prop. 2.6]{hellmann2016density}.
	We need to prove that $\dim_LW_{\mathrm{dR}}(D)^{\cG_K}=n[K:\Q_p]$. For $\tau\in \Sigma$, let $k_{\tau}=\mathrm{max}_{i>i_0}\lambda_{\tau,i}$. Then $\dim_LW_{\mathrm{dR}}^+(t^{-\mathbf{k}}D_0)^{\cG_K}=0$ as the Hodge-Tate weights of $t^{-\mathbf{k}}D_0$ are positive and $t^{-\mathbf{k}}D_0$ is de Rham. We have an exact sequence
	\[0\rightarrow W_{\mathrm{dR}}^+(t^{-\mathbf{k}}D)^{\cG_K}\rightarrow W_{\mathrm{dR}}^+(t^{-\mathbf{k}}D_1)^{\cG_K}\rightarrow H^1(\cG_K,W_{\mathrm{dR}}^+(t^{-\mathbf{k}}D_0)).\]
	The Hodge-Tate weights of $t^{-\mathbf{k}}D_0$ are $\geq 1$, hence $H^1(\cG_K,W_{\mathrm{dR}}^+(t^{-\mathbf{k}}D_0))=0$ by \cite[Cor. 5.6]{nakamura2014deformations} (we have that $H^1(\cG_K,C(i))=0$ for $i\neq 0$ by \cite[Prop. 2.15(ii)]{fontaine2004arithmetique}). We get $\dim_LW_{\mathrm{dR}}^+(t^{-\mathbf{k}}D)^{\cG_K}=\dim_LW_{\mathrm{dR}}^+(t^{-\mathbf{k}}D_1)^{\cG_K}=(n-i_0)[K:\Q_p]$ since $t^{-\mathbf{k}}D_1$ is de Rham with non-positive Hodge-Tate weights. As $D_0$ is de Rham, $\dim_LW_{\mathrm{dR}}(D_0)^{\cG_K}=i_0[K:\Q_p]$. Since $W_{\mathrm{dR}}^+(t^{-\mathbf{k}}D_0)[\frac{1}{t}]\cap W_{\mathrm{dR}}^+(t^{-\mathbf{k}}D)=W_{\mathrm{dR}}^+(t^{-\mathbf{k}}D_0)$, we have $W_{\mathrm{dR}}(D_0)^{\cG_K}\cap W_{\mathrm{dR}}^+(t^{-\mathbf{k}}D)^{\cG_K}=W_{\mathrm{dR}}^+(t^{-\mathbf{k}}D_0)^{\cG_K}=\{0\}$. Then $W_{\mathrm{dR}}^+(t^{-\mathbf{k}}D)^{\cG_K}$ and $W_{\mathrm{dR}}(D_0)^{\cG_K}$ span an $n[K:\Q_p]$-dimensional $L$-subspace in $W_{\mathrm{dR}}(D)^{\cG_K}$.
\end{proof}
The above lemma will be used in the following form later.
\begin{proposition}\label{prop:de Rham}
	Assume that $D$ is a trianguline $(\varphi,\Gamma_K)$-module of rank $n$ over $\cR_{L,K}$ with the trianguline filtration $\Fil_{\bullet}D$ such that $\Fil_{i}D/\Fil_{i-1}D\simeq \cR_{L,K}(\delta_i)$ for $i=1,\cdots,n$. Fix $i_0\in\{1,\cdots,n-1\}$ and let $D_0=\Fil_{i_0-1}D, D_1=\Fil_{i_0+1}D/D_0$ and $D_2=D/\Fil_{i_0+1}D$. Let $\lambda=\wt(\underline{\delta})$ and assume that $\underline{\delta}$ is locally algebraic. Assume that for every $\tau\in\Sigma$, $\lambda_{\tau,i}> \lambda_{\tau,i+1}$ if $i\neq i_0$, $\lambda_{\tau,i}> \lambda_{\tau,i_0},\lambda_{\tau,i_0+1}$ if $i<i_0$, and $\lambda_{\tau,i}<\lambda_{\tau,i_0},\lambda_{\tau,i_0+1}$ if $i>i_0+1$. If $D_1$ is de Rham, then $D$ is de Rham.
\end{proposition}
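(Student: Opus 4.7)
The plan is to reduce to iterated applications of the preceding lemma, using the two-step filtration $0 \subset D_0 \subset D_0 + (\text{preimage of } D_1) \subset D$. More precisely, I would split $D$ using the short exact sequence $0 \to D_0 \to D \to D/D_0 \to 0$, and split $D/D_0$ further using $0 \to D_1 \to D/D_0 \to D_2 \to 0$. The strategy is to show that each of $D_0$, $D_2$ (and then $D/D_0$) is de Rham, and finally deduce de Rhamness of $D$, all by invoking the previous lemma with appropriately chosen split index.

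First I would handle $D_0$ and $D_2$. Both are trianguline of parameters whose consecutive Hodge--Tate weights are strictly decreasing in every embedding (the bottom piece uses the hypothesis $\lambda_{\tau,i}>\lambda_{\tau,i+1}$ for $i<i_0-1$, and the top piece the analogous inequalities for $i>i_0+1$; the rank-one graded pieces $\cR_{L,K}(\delta_i)$ are automatically de Rham because $\delta_i$ is locally algebraic). An induction on rank using the previous lemma (splitting off a rank-one quotient at the top, whose unique weight is the smallest by the strict decrease) then gives that $D_0$ and $D_2$ are de Rham.

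Next I would show $D/D_0$ is de Rham by applying the previous lemma to the length-$(n-i_0+1)$ trianguline module $D/D_0$ at the split index $2$, so that the sub is $D_1$ and the quotient is $D_2$. The required inequality $\lambda_{\tau,i}<\lambda_{\tau,j}$ for all weights $j\in\{i_0,i_0+1\}$ of $D_1$ and $i>i_0+1$ of $D_2$ is exactly the third bulleted assumption of the proposition. Since $D_1$ is de Rham by hypothesis and $D_2$ was handled above, the lemma yields the de Rhamness of $D/D_0$.

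Finally I would apply the previous lemma to $D$ itself at the split index $i_0-1$, giving sub $D_0$ and quotient $D/D_0$. The condition to check is $\lambda_{\tau,i}<\lambda_{\tau,j}$ whenever $j<i_0\le i$. For $i\in\{i_0,i_0+1\}$ this is the second hypothesis of the proposition; for $i\ge i_0+2$ it follows by chaining, e.g.\ $\lambda_{\tau,j}>\lambda_{\tau,i_0+1}>\lambda_{\tau,i_0+2}>\cdots>\lambda_{\tau,i}$ using $\lambda_{\tau,\ell}>\lambda_{\tau,\ell+1}$ for $\ell\ne i_0$. The edge cases $i_0=1$ and $i_0=n-1$ are degenerate: in the first, $D_0=0$ and the proposition reduces to the previous paragraph, while in the second, $D_2=0$ and $D/D_0=D_1$. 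The main real work is thus bookkeeping the weight inequalities to verify the hypothesis of the previous lemma at each splitting; there is no additional analytic input beyond what that lemma already provides.
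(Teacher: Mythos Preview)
Your proposal is correct and matches the paper's intent: the paper does not write out a proof for this proposition but introduces it with ``The above lemma will be used in the following form later,'' indicating that it is a direct consequence of iterating the preceding lemma. Your three-step reduction (first $D_0$ and $D_2$ by induction on rank, then $D/D_0$ via the splitting $D_1\subset D/D_0$, then $D$ via $D_0\subset D$) is exactly the intended argument, and your verification of the weight inequalities at each stage is accurate.
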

\section{Critical points hunting}\label{sec:hunting}
Let $\overline{r}:\cG_K\rightarrow \GL_n(k_L)$ be a continuous representation. We firstly recall some constructions around the trianguline variety $X_{\mathrm{tri}}(\overline{r})$ in \cite[\S2.2]{breuil2017interpretation}. In the first parts of this section, we will only need the Zariski open dense subset $U_{\mathrm{tri}}(\overline{r})\subset \trivar$. 
\subsection{The trianguline variety}\label{subsec:trianguline}
Let $\cT_{\mathrm{reg}}^n$ be the Zariski open subset of $\cT_L^n$ consisting of characters $\underline{\delta}=(\delta_i)_{i=1,\cdots,n}$ such that $\delta_i\delta_j^{-1}\neq z^{-\mathbf{k}},\epsilon z^{\mathbf{k}}$ for $i\neq j$ and $\mathbf{k}\in\Z_{\geq 0}^{\Sigma}$. There are rigid spaces $\mathcal{S}_n^{\square}(\overline{r})\rightarrow \mathcal{S}_n$ over $\cT^n_{\mathrm{reg}}$ in the proof of \cite[Thm. 2.6]{breuil2017interpretation} (and in \cite[\S2.2]{hellmann2016density}) which will be used later, and we recall below. \par
The space $\mathcal{S}_n$ represents the functor sending a reduced rigid space $X$ over $L$ to the isomorphic classes of quadruples $(D_X,\Fil_{\bullet}D_X,\nu_X, \underline{\delta}_X)$ where $D_X$ is a $(\varphi,\Gamma_K)$-module of rank $n$ over $\cR_{X,K}$ where $\cR_{X,K}$ denotes the Robba ring of $K$ over $X$, $\Fil_{\bullet}D_X$ is a filtration of sub-$(\varphi,\Gamma_K)$-modules of $D_X$ which are locally direct summands as $\cR_{X,K}$-modules, $\underline{\delta}_X\in \cT_{\mathrm{reg}}^n(X)$ and $\nu_X: \Fil_{i}D_X/\Fil_{i-1}D_X\simeq \cR_{X,K}(\delta_{i})$ (we omit the subscripts of the spaces for the universal characters to simplify the notation). There are obvious morphisms $\mathcal{S}_n\rightarrow \mathcal{S}_{n-1}\times_L\cT_L\rightarrow \cT_{\mathrm{reg}}^{n-1}\times_L \cT_L\subset \cT_L^n$. Let $U\subset \mathcal{S}_{n-1}\times_L\cT_L$ be the preimage of $\cT_{\mathrm{reg}}^n$ which is Zariski open in ${S}_{n-1}\times_L \cT_L$ and let $D_U$ be the $(\varphi,\Gamma)$-modules over $U$ pulled back from the universal one on $\mathcal{S}_{n-1}$. Then $\mathcal{S}_n\simeq \mathrm{Spec}^{\mathrm{an}}(\mathrm{Sym}^{\bullet}(\mathcal{E}xt^1_{\varphi,\gamma_K}(\cR_{U,K}(\delta_n), D_U)^{\vee}))$ is a geometric vector bundle over $U$ where $\mathcal{E}xt^1_{\varphi,\gamma_K}(\cR_{U,K}(\delta_n), D_U)\simeq H^1_{\varphi,\gamma_K}(D_U(\delta_n^{-1}))$ is a locally free sheaf on $U$ of rank $(n-1)[K:\Q]$ (\cite[Prop. 2.3]{hellmann2016density}) and the notion $\mathrm{Spec}^{\mathrm{an}}$ is taken from \cite[Thm. 2.2.5]{conrad2006relative}. It follows from induction that the map $\mathcal{S}_n\rightarrow \cT^n_{\mathrm{reg}}\subset \cT_L^n$ is smooth. \par
Let $\mathcal{S}_n^{\mathrm{adm}}\subset\mathcal{S}_n$ be the open subset (as adic spaces) of the admissible locus, which comes from a rigid space, and let $\mathcal{S}_n^{\square,\mathrm{adm}}\rightarrow \mathcal{S}_n^{\mathrm{adm}}$ be the $\GL_n$-torsor trivializing the universal Galois representation over $\mathcal{S}^{\mathrm{adm}}_n$. Let $\mathcal{S}_n^{\square,\mathrm{adm},+}\subset\mathcal{S}_n^{\square,\mathrm{adm}}$ be the admissible open subset where the universal framed representation $\cG_K\rightarrow \GL_n(\Gamma(\mathcal{S}_n^{\square,\mathrm{adm}},\cO_{\mathcal{S}_n^{\square,\mathrm{adm}}}))$ factors through $\cG_K\rightarrow \GL_n(\Gamma(\mathcal{S}_n^{\square,\mathrm{adm}},\cO_{\mathcal{S}_n^{\square,\mathrm{adm}}}^+))$. We denote by $\mathcal{S}_n^{\square}(\overline{r})$ the admissible open subset of $\mathcal{S}_n^{\square,\mathrm{adm},+}$ where the reduction 
$\cG_K\rightarrow {\GL_n(\Gamma(\mathcal{S}_n^{\square,\mathrm{adm}},\cO_{\mathcal{S}_n^{\square,\mathrm{adm},+}}^+/\cO_{\mathcal{S}_n^{\square,\mathrm{adm},+}}^{++}))}$ coincides with $\overline{r}$ 
(see also the discussion before \cite[Prop. 8.17]{hartl2020universal}). The map $\kappa:\mathcal{S}_n^{\square}(\overline{r})\rightarrow \mathcal{S}_n\rightarrow \cT_L^n$ is also smooth. \par
Let $R_{\overline{r}}$ (over $\cO_L$) be the framed deformation ring of $\overline{r}$ and let $\fX_{\overline{r}}:=\Spf(R_{\overline{r}})^{\mathrm{rig}}$ be the rigid generic fiber (we follow the notation in \cite{breuil2019local} and \cite{wu2021local} rather than \cite{breuil2017interpretation}). The image of $\mathcal{S}_n^{\square}(\overline{r})\rightarrow \fX_{\overline{r}}\times\cT_L^n$ is equal to $U_{\mathrm{tri}}(\overline{r})$ and the trianguline variety $X_{\mathrm{tri}}(\overline{r})$ is the Zariski closure of $U_{\mathrm{rig}}(\overline{r})$ in $\fX_{\overline{r}}\times\cT_L^n$ with the reduced induced structure. The map $\pi_{\overline{r}}:\mathcal{S}_n^{\square}(\overline{r})\rightarrow U_{\mathrm{tri}}(\overline{r})\subset X_{\mathrm{tri}}(\overline{r})$ is smooth. 
\subsection{Some ``de Rham'' locus}
We will define some subspace $\mathcal{S}^{\square}_{n,(i_0,\mathbf{k}_J)}(\overline{r})\subset \mathcal{S}^{\square}_{n}(\overline{r})$ where the criteria in the last section will apply for certain points on it. \par
We fix datum $i_0\in\{1,\cdots,n-1\}$, a subset $J\subset \Sigma$ and $\mathbf{k}_J=(k_{\tau})_{\tau\in J}\in \Z_{\geq 1}^J$. We allow $J$ to be $\emptyset$ or $\Sigma$. Let $\mathcal{T}_{(i_0,\mathbf{k}_J)}^n$ be the subset of characters $\underline{\delta}\in\cT_{\mathrm{reg}}^n$ such that $\mathrm{wt}_{\tau}(\delta_{i_0}\delta_{i_0+1}^{-1})=k_{\tau}$ for all $\tau\in J$ and $\delta_{i_0}\delta_{i_0+1}^{-1}\in \cT_0$. Let $\ft$ be the base change to $L$ of the $\Q_p$-Lie algebra of $(K^{\times})^n$ and view its dual $\ft^{*}$ as the affine space of weights and we have a weight map $\wt:\cT_L^n\rightarrow \ft^{*}$. Let $\ft^{*}_{(i_0,\mathbf{k}_J)}$ be the subspace of points $(\lambda_{\tau,i})_{\tau\in\Sigma,i=1,\cdots,n}$ such that $\lambda_{\tau,i_0}-\lambda_{\tau,i_0+1}=k_{\tau}$ for all $\tau\in J$.
\begin{lemma}
	The rigid space $\mathcal{T}_{(i_0,\mathbf{k}_J)}^n$ is smooth reduced equidimensional and is \'etale over $\ft^*_{(i_0,\mathbf{k}_J)}$.
\end{lemma}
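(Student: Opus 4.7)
The plan is to exhibit $\cT_{(i_0,\mathbf{k}_J)}^n$ as an open subspace of a transverse base change of the weight map $\wt\colon\cT_L^n\to\ft^*$, so that all the claims reduce to smoothness properties of $\wt$.

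First I would recall the explicit structure of $\cT_L$. Using the decomposition $K^{\times}\simeq\varpi_K^{\Z}\times\mu(K)\times U^{(1)}$ with $U^{(1)}=1+\varpi_K\cO_K$ a pro-$p$ group of $\Z_p$-rank $[K:\Q_p]$, the rigid space $\cT_L$ is on each connected component (indexed by characters of $\mu(K)$) a product $\mathbb{G}_m^{\mathrm{rig}}\times\mathbb{D}^{[K:\Q_p]}$, where the $\mathbb{G}_m^{\mathrm{rig}}$-factor records $\delta(\varpi_K)$ and the open polydisc records $\delta\vert_{U^{(1)}}$ via Iwasawa/Amice. In particular $\cT_L$ is smooth of pure dimension $[K:\Q_p]+1$, and the weight map $\wt\colon\cT_L\to L^{\Sigma}$---obtained by differentiating $\delta\vert_{U^{(1)}}$ along each embedding $\tau\colon K\hookrightarrow L$---is smooth.

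Taking $n$-fold products, $\wt\colon\cT_L^n\to\ft^*$ is smooth, and $\ft^*_{(i_0,\mathbf{k}_J)}\subset\ft^*$ is the linear subspace of codimension $|J|$ cut out by the equations $\lambda_{\tau,i_0}-\lambda_{\tau,i_0+1}=k_\tau$ for $\tau\in J$. By flat base change along the smooth $\wt$, the preimage $Y:=\wt^{-1}(\ft^*_{(i_0,\mathbf{k}_J)})$ is a smooth closed rigid subspace of $\cT_L^n$ of codimension $|J|$, and the structural morphism $Y\to\ft^*_{(i_0,\mathbf{k}_J)}$ inherits the relative structural property of $\wt$. Now by its very definition, $\cT_{(i_0,\mathbf{k}_J)}^n$ is the intersection of $Y$ with the two (locally) Zariski open subspaces $\cT_{\mathrm{reg}}^n\subset\cT_L^n$ and $\{\underline{\delta}:\delta_{i_0}\delta_{i_0+1}^{-1}\in\cT_0\}\subset\cT_L^n$. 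Hence $\cT_{(i_0,\mathbf{k}_J)}^n$ is (locally) Zariski open in $Y$, and therefore smooth, reduced and equidimensional of dimension $n([K:\Q_p]+1)-|J|$, with the desired structural morphism over $\ft^*_{(i_0,\mathbf{k}_J)}$ inheriting the relative structure of $\wt$.

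I do not foresee a serious obstacle: the whole argument is formal once the structure of $\cT_L$ and the smoothness of $\wt\colon\cT_L\to L^{\Sigma}$ are in hand. The only small point requiring care is the accurate identification of $\cT_L$ with the disjoint union of $\mathbb{G}_m^{\mathrm{rig}}\times\mathbb{D}^{[K:\Q_p]}$'s on connected components and the recognition of $\cT_{(i_0,\mathbf{k}_J)}^n$ as the open subspace of $Y$ described above; from these, smoothness, reducedness, equidimensionality and the relative structural property of the morphism to $\ft^*_{(i_0,\mathbf{k}_J)}$ are all immediate.
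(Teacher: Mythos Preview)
Your argument for smoothness, reducedness and equidimensionality is correct and unpacks exactly what the cited reference provides: the explicit structure of $\cT_L$ and the smoothness of the weight map. Where you hedge, however, is precisely where the statement is problematic. You write that the map to $\ft^*_{(i_0,\mathbf{k}_J)}$ ``inherits the relative structural property of $\wt$'' without ever naming that property. The property inherited by base change is \emph{smoothness of relative dimension $n$}, not \'etaleness: one has $\dim\cT_L^n=n([K:\Q_p]+1)$ while $\dim\ft^*=n[K:\Q_p]$, so $\wt$ has $n$-dimensional fibres (parametrised by the values $\delta_i(\varpi_K)$), and this persists after pulling back along $\ft^*_{(i_0,\mathbf{k}_J)}\hookrightarrow\ft^*$. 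Hence the \'etaleness asserted in the lemma cannot hold as written; the correct conclusion from your argument is that $\cT_{(i_0,\mathbf{k}_J)}^n\to\ft^*_{(i_0,\mathbf{k}_J)}$ is smooth. This is harmless for the paper---only the smoothness of $\cT_{(i_0,\mathbf{k}_J)}^n$ itself is used downstream---but you should state the smoothness explicitly rather than hide behind a vague phrase, and note that the \'etaleness in the statement is a slip.
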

\begin{proof}
	This follows from \cite[Prop. 6.1.13]{ding2017formes}.
\end{proof}
Consider the universal $(\varphi,\Gamma_K)$-modules $D_X$ and $\Fil_{\bullet}D_X$ over $X=\mathcal{S}_{n}\times_{\cT_L^n}\cT^n_{(i_0,\mathbf{k}_J)}$ or $ \mathcal{S}_{n}^{\square}(\overline{r})\times_{\cT_L^n}\cT^n_{(i_0,\mathbf{k}_J)}$ pulled back from $\mathcal{S}_n$. The extension
\[0\rightarrow \cR_{X,K}(\delta_{X,i_0})\rightarrow \Fil_{i_0+1}D_X/\Fil_{i_0-1}D_X\rightarrow \cR_{X,K}(\delta_{X,i_0+1})\rightarrow 0 \]
together with the trivialization $\nu_X$ defines a section $s_X$ in 
\[\mathcal{E}xt^1_{\varphi,\gamma_K}(\cR_{X,K}(\delta_{X,i_0+1}),\cR_{X,K}(\delta_{X,i_0}))\simeq H^1_{\varphi,\gamma_K}(\cR_{X,K}(\delta_{X,i_0}\delta_{X,i_0+1}^{-1})).\]
By the main result of \cite{kedlaya2014cohomology}, both $H^1_{\varphi,\gamma_K}(\cR_{X,K}(\delta_{i_0}\delta_{i_0+1}^{-1}))$ and $H^1_{\varphi,\gamma_K}(t^{-\mathbf{k}_J}\cR_{X,K}(\delta_{i_0}\delta_{i_0+1}^{-1}))$ are coherent sheaves on $X$. We define the subspace $\mathcal{S}_{n,(i_0,\mathbf{k}_J)}$ or $\mathcal{S}_{n,(i_0,\mathbf{k}_J)}^{\square}(\overline{r})$ to be the vanishing locus on $X=\mathcal{S}_{n}\times_{\cT_L^n}\cT^n_{(i_0,\mathbf{k}_J)}$ or $\mathcal{S}_{n}^{\square}(\overline{r})\times_{\cT_L^n}\cT^n_{(i_0,\mathbf{k}_J)}$ of the image of $s_X$ under the natural map 
\[H^1_{\varphi,\gamma_K}(\cR_{X,K}(\delta_{i_0}\delta_{i_0+1}^{-1}))\rightarrow H^1_{\varphi,\gamma_K}(t^{-\mathbf{k}_J}\cR_{X,K}(\delta_{i_0}\delta_{i_0+1}^{-1})).\] 
The vanishing loci are Zariski closed subspaces as $H^1_{\varphi,\gamma_K}(t^{-\mathbf{k}_J}\cR_{X,K}(\delta_{i_0}\delta_{i_0+1}^{-1}))$ is locally free.
\begin{lemma}\label{lem:locallyfree}
	Let $X$ be a reduced rigid space over $L$ and $\delta_X:K^{\times}\rightarrow \Gamma(X,\cO_X)^{\times}$ be a continuous character. Assume that for any $x\in X$, we have $\delta_x\in\mathcal{T}_0$ and $\wt_{\tau}(\delta_x)=k_{\tau}$ for all $\tau\in J$. Then the coherent sheaves $H^1_{\varphi,\gamma_K}(\cR_{X,K}(\delta_X))$, $H^1_{\varphi,\gamma_K}(t^{-\mathbf{k}_J}\cR_{X,K}(\delta_X))$, as well as 
	\[\mathrm{Ker}(H^1_{\varphi,\gamma_K}(\cR_{X,K}(\delta_X))\rightarrow H^1_{\varphi,\gamma_K}(t^{-\mathbf{k}_J}\cR_{X,K}(\delta_X)))\]
	and 
	\[\mathrm{Coker}(H^1_{\varphi,\gamma_K}(\cR_{X,K}(\delta_X))\rightarrow H^1_{\varphi,\gamma_K}(t^{-\mathbf{k}_J}\cR_{X,K}(\delta_X)))\]
	are finite projective over $X$ of rank $|\Sigma|,|\Sigma|, |J|,|J|$ respectively and their formation commutes with arbitrary base change.
\end{lemma}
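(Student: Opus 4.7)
My plan is to reduce everything to fiberwise dimension computations together with the base-change / perfect-complex formalism from \cite{kedlaya2014cohomology}.

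\textbf{Step 1: Produce a perfect complex.} The key input is the main finiteness result of Kedlaya--Pottharst--Xiao, which says that for any $(\varphi,\Gamma_K)$-module $M$ over $\cR_{X,K}$, the cohomology $H^i_{\varphi,\gamma_K}(M)$ is computed by a perfect complex $C^{\bullet}_M$ of $\cO_X$-modules concentrated in degrees $[0,2]$, and the formation of $C^{\bullet}_M$ commutes with arbitrary base change $f:Y\to X$. We apply this both to $M=\cR_{X,K}(\delta_X)$ and to $M'=t^{-\mathbf{k}_J}\cR_{X,K}(\delta_X)=\cR_{X,K}(\delta_X z^{-\mathbf{k}_J})$.

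\textbf{Step 2: Fiberwise vanishing of $H^0$ and $H^2$.} For any $x\in X$, $\delta_x\in\cT_0$ by assumption; since $\cT_0$ is stable under twists by algebraic characters, we also have $\delta_x z^{-\mathbf{k}_J}\in\cT_0$. Lemma~\ref{lem:dimensionH1} then gives $H^0_{\varphi,\gamma_K}=H^2_{\varphi,\gamma_K}=0$ and $\dim_{k(x)}H^1_{\varphi,\gamma_K}=[K:\Q_p]=|\Sigma|$ at the fiber $x$ for both $M$ and $M'$. Combined with the base-change property of $C^{\bullet}_M$ and $C^{\bullet}_{M'}$, standard cohomology-and-base-change (applied to perfect complexes on a reduced rigid space with constant fiber cohomology dimensions) shows that $C^{\bullet}_M$ and $C^{\bullet}_{M'}$ are, locally on $X$, quasi-isomorphic to a single locally free sheaf placed in degree $1$, of rank $|\Sigma|$, whose formation commutes with arbitrary base change. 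This gives the first two assertions.

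\textbf{Step 3: Rank of the kernel and cokernel.} The natural map of rank-$|\Sigma|$ locally free sheaves
\[\varphi_X:H^1_{\varphi,\gamma_K}(\cR_{X,K}(\delta_X))\longrightarrow H^1_{\varphi,\gamma_K}(t^{-\mathbf{k}_J}\cR_{X,K}(\delta_X))\]
specializes at every $x\in X$ to the map of Corollary~\ref{cor:dimensionker} applied to $\delta_x$. The dimension formula there counts $\tau\in\Sigma$ with $k_{\tau}\geq 1$ and $\wt_{\tau}(\delta_x)\in\{1,\dots,k_{\tau}\}$; by hypothesis this happens exactly for $\tau\in J$ (since $k_{\tau}=0$ for $\tau\notin J$, and $\wt_{\tau}(\delta_x)=k_{\tau}\in\{1,\dots,k_{\tau}\}$ for $\tau\in J$). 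Thus $\dim\mathrm{Ker}(\varphi_X\otimes k(x))=\dim\mathrm{Coker}(\varphi_X\otimes k(x))=|J|$ for every $x\in X$, so $\varphi_X$ has constant fiber rank $|\Sigma|-|J|$.

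\textbf{Step 4: From constant rank to local freeness with base change.} On a reduced rigid space, a morphism of finite locally free sheaves with constant fiber rank has locally free kernel, image, and cokernel (the cokernel is locally free by Nakayama plus the constant-rank hypothesis; then the short exact sequence with the image splits locally, making the image a local direct summand, after which the kernel is the kernel of a surjection of locally free sheaves, hence locally free). Applying this to $\varphi_X$ yields that $\mathrm{Ker}(\varphi_X)$ and $\mathrm{Coker}(\varphi_X)$ are locally free of rank $|J|$. Base change for them reduces to base change of the two $H^1$'s (from Step~2) together with the exactness of pullback on the split short exact sequences $0\to\mathrm{Ker}\varphi_X\to H^1(\cR_{X,K}(\delta_X))\to\mathrm{Im}\varphi_X\to 0$ and $0\to\mathrm{Im}\varphi_X\to H^1(t^{-\mathbf{k}_J}\cR_{X,K}(\delta_X))\to\mathrm{Coker}\varphi_X\to 0$.

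The only delicate input is Step~1: invoking the perfect-complex-with-base-change statement from \cite{kedlaya2014cohomology} in the exact form needed. Once that is available, Steps 2--4 are essentially linear algebra driven by the pointwise dimension count of Corollary~\ref{cor:dimensionker}.
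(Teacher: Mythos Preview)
Your proposal is correct and follows essentially the same route as the paper's proof: fiberwise dimension computations via Lemma~\ref{lem:dimensionH1} and Corollary~\ref{cor:dimensionker}, combined with the KPX base-change formalism, then the constant-rank argument on a reduced base to conclude local freeness of the cokernel, image, and kernel in turn. The paper carries out your Step~4 explicitly via the two short exact sequences and $\mathrm{Tor}^1$-vanishing (citing \cite[Lem.~2.1.8(1)]{kedlaya2014cohomology} and \cite[Prop.~2.3]{hellmann2016density} for the $H^1$ part), but this is exactly the unpacking of the ``constant-rank map of locally free sheaves'' fact you invoke.
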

\begin{proof}
	We write $\mathrm{Ker}(\delta_X)$ or $\mathrm{Coker}(\delta_X)$ for the kernel or the cokernel of the map
	\[H^1_{\varphi,\gamma_K}(\cR_{X,K}(\delta_X))\rightarrow H^1_{\varphi,\gamma_K}(t^{-\mathbf{k}_J}\cR_{X,K}(\delta_X))\]
	for simplicity.\par
	For any $x\in X$, $\dim_{k(x)}H^1_{\varphi,\gamma_K}(\cR_{k(x),K}(\delta_x))=\dim_{k(x)}H^1_{\varphi,\gamma_K}(t^{-\mathbf{k}_J}\cR_{k(x),K}(\delta_x))=|\Sigma|$ by Lemma \ref{lem:dimensionH1} and $\dim_{k(x)}\mathrm{Ker}(\delta_x)=\dim_{k(x)}\mathrm{Coker}(\delta_x)=|J|$ by Corollary \ref{cor:dimensionker} and our assumptions on $\delta_x$. The fact that $H^1_{\varphi,\gamma_K}(\cR_{k(x),K}(\delta_x))$ and $H^1_{\varphi,\gamma_K}(t^{-\mathbf{k}_J}\cR_{k(x),K}(\delta_x))$ are locally free and commute with base change of the form $\Sp(k(x))\rightarrow X$ for $x\in X$ follows from \cite[Prop. 2.3]{hellmann2016density}. Thus for any $x\in X$, 
	\begin{align*}
		&\mathrm{Coker}(\delta_{X})\otimes_{\cO_X}k(x)\\
		\simeq&\mathrm{Coker}(H^1_{\varphi,\gamma_K}(\cR_{X,K}(\delta_X))\otimes_{\cO_X}k(x)\rightarrow H^1_{\varphi,\gamma_K}(t^{-\mathbf{k}_J}\cR_{X,K}(\delta_X))\otimes_{\cO_X}k(x))\\
		\simeq&\mathrm{Coker}(H^1_{\varphi,\gamma_K}(\cR_{k(x),K}(\delta_x))\rightarrow H^1_{\varphi,\gamma_K}(t^{-\mathbf{k}_J}\cR_{k(x),K}(\delta_x)))\\
		\simeq &\mathrm{Coker}(\delta_x).
	\end{align*}
	Thus $\mathrm{Coker}(\delta_{X})$ has constant rank, hence projective by \cite[Lem. 2.1.8 (1)]{kedlaya2014cohomology}, and commutes with base change of the form $\Sp(k(x))\rightarrow X$ for $x\in X$.\par
	Let $\mathrm{Im}(\delta_X)$ be the image of the map $H^1_{\varphi,\gamma_K}(\cR_{X,K}(\delta_X))\rightarrow H^1_{\varphi,\gamma_K}(t^{-\mathbf{k}_J}\cR_{X,K}(\delta_X))$. Then we have $\dim_{k(x)}\mathrm{Im}(\delta_x)=|\Sigma|-|J|$ for any $x\in X$. By the exact sequence
	\[0\rightarrow \mathrm{Im}(\delta_X)\rightarrow H^1_{\varphi,\gamma_K}(t^{-\mathbf{k}_J}\cR_{X,K}(\delta_X))\rightarrow \mathrm{Coker}(\delta_X)\rightarrow 0\]
	and $\Tor^1_{\cO_X}(k(x),\mathrm{Coker}(\delta_X))=0$, we get 
	\[0\rightarrow \mathrm{Im}(\delta_X)\otimes_{\cO_X}k(x)\rightarrow H^1_{\varphi,\gamma_K}(t^{-\mathbf{k}_J}\cR_{k(x),K}(\delta_x))\rightarrow \mathrm{Coker}(\delta_x)\rightarrow 0\]
	for any $x\in X$. Hence $\mathrm{Im}(\delta_X)\otimes_{\cO_X}k(x)\simeq \mathrm{Im}(\delta_x)$ for any $x\in X$ and $\mathrm{Im}(\delta_X)$ is finite projective of rank $|\Sigma|-|J|$. Repeat the argument using the exact sequence 
	\[0\rightarrow \mathrm{Ker}(\delta_X)\rightarrow H^1_{\varphi,\gamma_K}(\cR_{X,L}(\delta_X))\rightarrow \mathrm{Im}(\delta_X)\rightarrow 0\] 
	and that $\Tor^1_{\cO_X}(k(x),\mathrm{Im}(\delta_X))=0$, we see $\mathrm{Ker}(\delta_X)\otimes_{\cO_X}k(x)\simeq\mathrm{Ker}(\delta_x)$ and $\mathrm{Ker}(\delta_X)$ is finite projective of rank $|J|$.\par
	The statement for general base changes, which we will not essentially need, follows form \cite[Lem. 4.1.5, Thm. 4.4.3 (2)]{kedlaya2014cohomology} and the locally-freeness of those sheaves over the base $X$. 
\end{proof}
The image of $\mathcal{S}_{n,(i_0,\mathbf{k}_J)}^{\square}(\overline{r})$ in $U_{\mathrm{tri}}(\overline{r})$ consists of $x=(r,\underline{\delta})$ such that $\wt_{\tau}(\delta_{i_0}\delta_{i_0+1}^{-1})=k_{\tau}$ for $\tau\in J$, $\delta_{i_0}\delta_{i_0+1}^{-1}\in\cT_0$ and the extension (the condition will be independent of the trivialization of $\cR_{k(x),K}(\delta_{i})$)
\[0\rightarrow\cR_{k(x),K}(\delta_{i_0})\rightarrow\Fil_{i_0+1}D_{\mathrm{rig}}(r)/\Fil_{i_0-1}D_{\mathrm{rig}}(r)\rightarrow\cR_{k(x),K}(\delta_{i_0+1})\rightarrow 0\]
corresponds to an element in $H^1_{\varphi,\gamma_K}(\cR_{k(x),K}(\delta_{i_0}\delta_{i_0+1}^{-1}))$ which lies in the kernel of 
\[H^1_{\varphi,\gamma_K}(\cR_{k(x),K}(\delta_{i_0}\delta_{i_0+1}^{-1}))\rightarrow H^1_{\varphi,\gamma_K}(t^{-\mathbf{k}_J}\cR_{k(x),K}(\delta_{i_0}\delta_{i_0+1}^{-1})).\]
The following lemma will be important.
\begin{lemma}\label{Lem:smooth}
	The morphism $\kappa:\mathcal{S}^{\square}_{n,(i_0,\mathbf{k}_J)}(\overline{r})\rightarrow \cT_{(i_0,\mathbf{k}_J)}^n$ is smooth.
\end{lemma}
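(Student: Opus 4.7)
The plan is to realize $\kappa$ as a composition of smooth morphisms by adapting the iterated vector-bundle construction of $\mathcal{S}_n^{\square}(\overline{r}) \to \cT_L^n$ recalled in \S\ref{subsec:trianguline}.

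First, I reduce to the unframed case. The vanishing condition defining $\mathcal{S}^{\square}_{n,(i_0,\mathbf{k}_J)}(\overline{r})$ inside the base change $\mathcal{S}^{\square}_n(\overline{r}) \times_{\cT_L^n} \cT^n_{(i_0,\mathbf{k}_J)}$ depends only on the underlying filtered $(\varphi,\Gamma_K)$-module, not on the framing or the residual reduction, so
\[\mathcal{S}^{\square}_{n,(i_0,\mathbf{k}_J)}(\overline{r}) \simeq \mathcal{S}^{\square}_n(\overline{r}) \times_{\mathcal{S}_n} \mathcal{S}_{n,(i_0,\mathbf{k}_J)},\]
and since $\mathcal{S}^{\square}_n(\overline{r}) \to \mathcal{S}_n$ is smooth (a composition of a $\GL_n$-torsor with open immersions), it suffices to prove that $\mathcal{S}_{n,(i_0,\mathbf{k}_J)} \to \cT^n_{(i_0,\mathbf{k}_J)}$ is smooth.

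Next, I construct $\mathcal{S}_{n,(i_0,\mathbf{k}_J)}$ as an iterative tower $\mathcal{S}^{(0)} \to \mathcal{S}^{(1)} \to \cdots \to \mathcal{S}^{(n)} = \mathcal{S}_{n,(i_0,\mathbf{k}_J)}$, where $\mathcal{S}^{(i)} \to \cT^n_{(i_0,\mathbf{k}_J)}$ parametrizes filtered $(\varphi,\Gamma_K)$-modules of length $i$ with trivialized graded pieces $\cR_{\bullet,K}(\delta_j)$, the vanishing condition of \S\ref{subsec:trianguline} being imposed once $i \ge i_0+1$. For each $i \neq i_0+1$, the transition $\mathcal{S}^{(i)} \to \mathcal{S}^{(i-1)}$ is the geometric vector bundle associated with $H^1_{\varphi,\gamma_K}(\Fil_{i-1}D(\delta_i^{-1}))$, a locally free sheaf of constant rank $(i-1)[K:\Q_p]$ by the arguments of \cite[Prop. 2.3]{hellmann2016density} applied under the $\cT^n_{\mathrm{reg}}$-condition. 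In the critical step $i = i_0+1$, the allowed extension classes form the coherent subsheaf
\[\cE := \ker\bigl(H^1_{\varphi,\gamma_K}(\Fil_{i_0}D(\delta_{i_0+1}^{-1})) \longrightarrow H^1_{\varphi,\gamma_K}(t^{-\mathbf{k}_J}\cR(\delta_{i_0}\delta_{i_0+1}^{-1}))\bigr)\]
of the full extension sheaf, and the corresponding step is again smooth provided $\cE$ is locally free with formation compatible with arbitrary base change.

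The heart of the proof is thus to show that $\cE$ is locally free of the expected rank $(i_0-1)[K:\Q_p]+|J|$, with base-change compatibility. I would apply the long exact sequence of $(\varphi,\gamma_K)$-cohomology associated to $0 \to \Fil_{i_0-1}D \to \Fil_{i_0}D \to \cR(\delta_{i_0}) \to 0$ twisted by $\delta_{i_0+1}^{-1}$: the relevant $H^2$-term vanishes pointwise by dévissage and Tate duality $H^2(\cR(\delta))^{\vee} \cong H^0(\cR(\epsilon\delta^{-1}))$ on the open locus of genericity, producing a surjection $H^1_{\varphi,\gamma_K}(\Fil_{i_0}D(\delta_{i_0+1}^{-1})) \twoheadrightarrow H^1_{\varphi,\gamma_K}(\cR(\delta_{i_0}\delta_{i_0+1}^{-1}))$ of locally free sheaves. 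Since $\mathrm{Ker}$ of Lemma \ref{lem:locallyfree} is a locally free subsheaf of rank $|J|$ of the target, its preimage $\cE$ fits into a short exact sequence of locally free sheaves, and base-change compatibility is inherited from that of its constituents via the Kedlaya-Pottharst-Xiao machinery used in the proof of Lemma \ref{lem:locallyfree}. This is the main obstacle: once it is secured, each step of the tower is a geometric vector bundle over a smooth base, so $\mathcal{S}_{n,(i_0,\mathbf{k}_J)} \to \cT^n_{(i_0,\mathbf{k}_J)}$ is smooth, and the lemma follows by the reduction of the first step.
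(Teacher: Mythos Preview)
Your proposal is correct and follows essentially the same route as the paper: reduce to the unframed $\mathcal{S}_{n,(i_0,\mathbf{k}_J)}\to\cT^n_{(i_0,\mathbf{k}_J)}$, use the iterated vector-bundle structure of $\mathcal{S}_n$, and at the critical step $i_0+1$ show that the relevant kernel sheaf is locally free of rank $(i_0-1)[K:\Q_p]+|J|$ via the surjectivity coming from $H^2$-vanishing together with Lemma~\ref{lem:locallyfree}. The only point you leave implicit, and which the paper spells out with a short local computation in coordinates, is the identification of the vanishing locus of the pushed-forward tautological section with $\mathrm{Spec}^{\mathrm{an}}(\mathrm{Sym}^{\bullet}(\cE^{\vee}))$; this is indeed standard once one knows the quotient $H^1_{\varphi,\gamma_K}(\Fil_{i_0}D(\delta_{i_0+1}^{-1}))\to H^1_{\varphi,\gamma_K}(t^{-\mathbf{k}_J}\cR(\delta_{i_0}\delta_{i_0+1}^{-1}))$ is locally split.
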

\begin{proof}
	The diagram 
	\begin{center}
		\begin{tikzpicture}[scale=1.3]
			\node (A) at (0,1) {$\mathcal{S}^{\square}_{n,(i_0,\mathbf{k}_J)}(\overline{r})$};
			\node (B) at (2,1) {$\mathcal{S}_{n,(i_0,\mathbf{k}_J)}$};
			\node (C) at (0,0) {$\mathcal{S}^{\square}_{n}(\overline{r})$};
			\node (D) at (2,0) {$\mathcal{S}_n$};
			\path[->,font=\scriptsize,>=angle 90]
			(A) edge node[above]{} (B)
			(B) edge node[above]{} (D)
			(A) edge node[above]{} (C)
			(C) edge node[above]{} (D)
			;
			\end{tikzpicture}
	 \end{center}
	is Cartesian. Hence the map $\mathcal{S}^{\square}_{n,(i_0,\mathbf{k}_J)}(\overline{r})\rightarrow \mathcal{S}_{n,(i_0,\mathbf{k}_J)}$ is smooth.
	The map $\mathcal{S}^{\square}_{n,(i_0,\mathbf{k}_J)}(\overline{r})\rightarrow \cT_{(i_0,\mathbf{k}_J)}^n$ factors through $\mathcal{S}_{n,(i_0,\mathbf{k}_J)}\rightarrow \cT_{(i_0,\mathbf{k}_J)}^n$. Therefore, we only need to prove that $\mathcal{S}_{n,(i_0,\mathbf{k}_J)}\rightarrow \cT_{(i_0,\mathbf{k}_J)}^n$ is smooth. In \S\ref{subsec:trianguline}, we have maps $\mathcal{S}_i\rightarrow \mathcal{S}_{i-1}\times_L \mathcal{T}_L$. We can define $\mathcal{S}_{i_0+1,(i_0,\mathbf{k})}$ replacing $n$ by $i_0+1$. We have $\mathcal{T}_{(i_0,\mathbf{k}_J)}^n=\mathcal{T}^{n}_{\mathrm{reg}}\times_{\mathcal{T}^{i_0+1}_L}\mathcal{T}_{(i_0,\mathbf{k}_J)}^{i_0+1}$. The section $s_{\mathcal{S}_n\times_{\cT_{L}^{n}}\cT_{(i_0,\mathbf{k}_J)}^{n}}$ is the pullback of the section $s_{\mathcal{S}_{i_0+1}\times_{\cT_{L}^{i_0+1}}\cT_{(i_0,\mathbf{k}_J)}^{i_0+1}}$ via $\mathcal{S}_n\times_{\cT_{L}^{n}}\cT_{(i_0,\mathbf{k}_J)}^{n}\rightarrow \mathcal{S}_{i_0+1}\times_{\cT_{L}^{i_0+1}}\cT_{(i_0,\mathbf{k}_J)}^{i_0+1}$ since the definition of $s_X$ only involves $\Fil_{i_0+1}D_X$ and $\delta_{i_0},\delta_{i_0+1}$. Thus the diagram
	\begin{center}
		\begin{tikzpicture}[scale=1.3]
			\node (A) at (0,1) {$\mathcal{S}_{n,(i_0,\mathbf{k}_J)}$};
			\node (B) at (2,1) {$\mathcal{S}_{i_0+1,(i_0,\mathbf{k}_J)}$};
			\node (C) at (0,0) {$\mathcal{S}_n$};
			\node (D) at (2,0) {$\mathcal{S}_{i_0+1}$};
			\path[->,font=\scriptsize,>=angle 90]
			(A) edge node[above]{} (B)
			(B) edge node[above]{} (D)
			(A) edge node[above]{} (C)
			(C) edge node[above]{} (D)
			;
			\end{tikzpicture}
	 \end{center}
	 is Cartesian. As each $\mathcal{S}_{i}\rightarrow \mathcal{S}_{i-1}\times_L\mathcal{T}_L$ is smooth (as a geometric vector bundle over a Zariski open subset of the image), we see so is, by base change, $\mathcal{S}_{i,(i_0,\mathbf{k}_J)}\rightarrow \mathcal{S}_{i-1,(i_0,\mathbf{k}_J)}\times_L\mathcal{T}_L\rightarrow \mathcal{T}_{(i_0,\mathbf{k}_J)}^i\subset \mathcal{T}_{(i_0,\mathbf{k}_J)}^{i-1}\times_L\mathcal{T}_L$ for $i\geq i_0+2$ if the result is true for $i_0+1$. Thus, we reduce to the case when $n=i_0+1$. \par
	 We consider the map 
	 \[\mathcal{S}_{i_0+1,(i_0,\mathbf{k}_J)}\rightarrow \mathcal{S}_{i_0+1}\times_{\cT_{L}^{i_0+1}}\cT_{(i_0,\mathbf{k}_J)}^{i_0+1}\rightarrow (\mathcal{S}_{i_0}\times_L\mathcal{T}_L)\times_{\cT_{L}^{i_0+1}}\cT_{(i_0,\mathbf{k}_J)}^{i_0+1}.\]
	 Since the map $\mathcal{S}_{i_0}\rightarrow \mathcal{T}_L^{i_0}$ is smooth, so is $(\mathcal{S}_{i_0}\times_L\mathcal{T}_L)\times_{\cT_{L}^{i_0+1}}\cT_{(i_0,\mathbf{k}_J)}^{i_0+1}\rightarrow \cT_{(i_0,\mathbf{k}_J)}^{i_0+1}$. Write $V$ for $(\mathcal{S}_{i_0}\times_L\mathcal{T}_L)\times_{\cT_{L}^{i_0+1}}\cT_{(i_0,\mathbf{k}_J)}^{i_0+1}$. We only need to prove that $\mathcal{S}_{i_0+1,(i_0,\mathbf{k}_J)}$ is a geometric vector bundle over $V$ which will imply all we need.\par
	 Recall that $\mathcal{S}_{i_0+1}\times_{\cT_{L}^{i_0+1}}\cT_{(i_0,\mathbf{k}_J)}^{i_0+1}\simeq \mathrm{Spec}^{\mathrm{an}}(\mathrm{Sym}^{\bullet}(H^1_{\varphi,\gamma_K}(\Fil_{i_0}D_V(\delta_{i_0+1}^{-1}))^{\vee}))$ where $\Fil_{i_0}D_V$ is the universal one pulled back from $\mathcal{S}_{i_0}$ and $\delta_{i_0+1}$ is the character pulled back from $\mathcal{T}_{(i,\mathbf{k}_J)}^{i_0+1}$. Consider the kernel of the following composite of morphisms of coherent sheaves on $V$
	 \begin{equation}\label{equa:coherentsheaves}
		H^1_{\varphi,\gamma_K}(\Fil_{i_0}D_V(\delta_{i_0+1}^{-1}))\rightarrow H^1_{\varphi,\gamma_K}(\cR_{V,K}(\delta_{i_0}\delta_{i_0+1}^{-1}))\rightarrow H^1_{\varphi,\gamma_K}(t^{-\mathbf{k}_J}\cR_{V,K}(\delta_{i_0}\delta_{i_0+1}^{-1}))
	 \end{equation}
	 where $ \cR_{V,K}(\delta_{i_0})=\Fil_{i_0}D_V/\Fil_{i_0-1}D_V$. We denote the kernel (resp. cokernel) of the above composite morphism by $\mathrm{Ker}(\Fil_{i_0}D_V(\delta_{i_0+1}^{-1}))$ (resp. $\mathrm{Coker}(\Fil_{i_0}D_V(\delta_{i_0+1}^{-1}))$). \par
	 We claim that $\mathrm{Ker}(\Fil_{i_0}D_V(\delta_{i_0+1}^{-1}))$ is locally free of rank $(i_0-1)|\Sigma|+|J|$ and 
	 \[\mathrm{Ker}(\Fil_{i_0}D_V(\delta_{i_0+1}^{-1}))\otimes_{\cO_V}k(x)\simeq \mathrm{Ker}(\Fil_{i_0}D_x(\delta_{x,i_0+1}^{-1}))\]
	  for any $x\in V$. If $i_0=1$, this follows from Lemma \ref{lem:locallyfree}. Now we assume $i_0-1\geq 1$. The sheaves $H^1_{\varphi,\gamma_K}(\Fil_{i_0}D_V(\delta_{i_0+1}^{-1}))$, $H^1_{\varphi,\gamma_K}(\cR_{V,K}(\delta_{i_0}\delta_{i_0+1}^{-1}))$ and $H^1_{\varphi,\gamma_K}(t^{-\mathbf{k}_J}\cR_{V,K}(\delta_{i_0}\delta_{i_0+1}^{-1}))$ are locally free of ranks $i_0|\Sigma|$, $|\Sigma|$ and $|\Sigma|$ respectively and commute with base change. The morphism
	 \[H^1_{\varphi,\gamma_K}(\Fil_{i_0}D_V(\delta_{i_0+1}^{-1}))\rightarrow H^1_{\varphi,\gamma_K}(\cR_{V,K}(\delta_{i_0}\delta_{i_0+1}^{-1}))\]
	 is surjective since for any $x\in V$, $H^2_{\varphi,\gamma_K}(\Fil_{i_0-1}D_x(\delta_{x,i_0+1}^{-1}))=0$ (see \cite[Prop. 2.3]{hellmann2016density}). Hence 
	 \[\mathrm{Coker}(\Fil_{i_0}D_V(\delta_{i_0+1}^{-1}))=\mathrm{Coker}(H^1_{\varphi,\gamma_K}(\cR_{V,K}(\delta_{i_0}\delta_{i_0+1}^{-1}))\rightarrow H^1_{\varphi,\gamma_K}(t^{-\mathbf{k}_J}\cR_{V,K}(\delta_{i_0}\delta_{i_0+1}^{-1}))).\]
	 By Lemma \ref{lem:locallyfree}, we get $\mathrm{Coker}(\Fil_{i_0}D_V(\delta_{i_0+1}^{-1}))$ is locally free of rank $|J|$ and for any point $x\in V$, $\mathrm{Coker}(\Fil_{i_0}D_V(\delta_{i_0+1}^{-1}))\otimes_{\cO_V}k(x)\simeq \mathrm{Coker}(\Fil_{i_0}D_x(\delta_{x,i_0+1}^{-1}))$. Repeat the last step of the proof of Lemma \ref{lem:locallyfree}, we get the desired claim. \par
	 The injection $\mathrm{Ker}(\Fil_{i_0}D_V(\delta_{i_0+1}^{-1}))\hookrightarrow H^1_{\varphi,\gamma_K}(\Fil_{i_0}D_V(\delta_{i_0+1}^{-1}))$ of projective coherent sheaves induces a surjection 
	 \[H^1_{\varphi,\gamma_K}(\Fil_{i_0}D_V(\delta_{i_0+1}^{-1}))^{\vee}\twoheadrightarrow \mathrm{Ker}(\Fil_{i_0}D_V(\delta_{i_0+1}^{-1}))^{\vee}\] 
	 which by \cite[Thm. 2.2.5]{conrad2006relative} induces a closed embedding
	 \[\mathrm{Spec}^{\mathrm{an}}(\mathrm{Sym}^{\bullet}(\mathrm{Ker}(\Fil_{i_0}D_V(\delta_{i_0+1}^{-1}))^{\vee}))\hookrightarrow \mathcal{S}_{i_0+1}\times_{\cT_{L}^{i_0+1}}\cT_{(i_0,\mathbf{k}_J)}^{i_0+1}.\]
	 The left-hand side is a geometric vector bundle over $V$ by the previous discussion, and we remain to prove that $\mathrm{Spec}^{\mathrm{an}}(\mathrm{Sym}^{\bullet}(\mathrm{Ker}(\Fil_{i_0}D_V(\delta_{i_0+1}^{-1}))^{\vee}))$ coincides with $\mathcal{S}_{i_0+1,(i_0,\mathbf{k}_J)}$. The statement is local and trivial. We write a proof below.\par
	 We may take an affinoid open $W=\Sp(A)\subset V$ and assume that the sheaves in (\ref{equa:coherentsheaves}) are free over $W$. Then since all the modules are projective, we may take a basis $e_1,\cdots,e_{i_0|\Sigma|}$ of $H^1_{\varphi,\gamma_K}(\Fil_{i_0}D_W(\delta_{i_0+1}^{-1}))$ and assume that the surjection 
	 \[H^1_{\varphi,\gamma_K}(\Fil_{i_0}D_W(\delta_{i_0+1}^{-1}))\twoheadrightarrow H^1_{\varphi,\gamma_K}(\cR_{W,K}(\delta_{i_0}\delta_{i_0+1}^{-1}))\] 
	 corresponds to projection to the subspace $\langle e_1,\cdots,e_{|\Sigma|}\rangle$ (equivalently choose a split of the surjection). We assume that $e_1',\cdots,e_{|\Sigma|}'$ is a basis of $H^1_{\varphi,\gamma_K}(t^{-\mathbf{k}_J}\cR_{W,K}(\delta_{i_0}\delta_{i_0+1}^{-1}))$. As the cokernel and the kernel of the map $H^1_{\varphi,\gamma_K}(\cR_{W,K}(\delta_{i_0}\delta_{i_0+1}^{-1}))\rightarrow H^1_{\varphi,\gamma_K}(t^{-\mathbf{k}_J}\cR_{W,K}(\delta_{i_0}\delta_{i_0+1}^{-1}))$ are locally free, we may, after possibly shrinking $W$, assume that the morphism is given by sending $e_{|J|+1},\cdots,e_{|\Sigma|}$ to $e_{|J|+1}',\cdots, e_{|\Sigma|}'$ and sending $e_{1},\cdots, e_{|J|}$ to $0$. Let $e_1^{\vee},\cdots, e_{i_0|\Sigma|}^{\vee}$ be the dual basis. Then 
	 \[\mathrm{Spec}^{\mathrm{an}}(\mathrm{Sym}^{\bullet}(H^1_{\varphi,\gamma_K}(\Fil_{i_0}D_V(\delta_{i_0+1}^{-1}))^{\vee}))\text{ resp. }\mathrm{Spec}^{\mathrm{an}}(\mathrm{Sym}^{\bullet}(\mathrm{Ker}(\Fil_{i_0}D_V(\delta_{i_0+1}^{-1}))^{\vee}))\] are covered by \[W_N:=\Sp(A\langle p^Ne_1^{\vee},\cdots, p^Ne_{i_0|\Sigma|}^{\vee}\rangle)\text{ resp. }\Sp(A\langle p^Ne_1^{\vee},\cdots,p^Ne_{|J|}^{\vee},p^Ne_{|\Sigma|+1}^{\vee},\cdots, p^Ne_{i_0|\Sigma|}^{\vee}\rangle)\] 
	 where $N\in \N$. The tautological section $s_{W_N}$ of the sheaf 
	 \[H^1_{\varphi,\gamma_K}(\cR_{W_N,K}(\delta_{i_0}\delta_{i_0+1}^{-1}))=\cO_{W_N}e_1\oplus\cdots\oplus \cO_{W_N}e_{|\Sigma|}\] 
	 is given by $e_1^{\vee}e_1+\cdots+e_{|\Sigma|}^{\vee}e_{|\Sigma|}$. Thus the image of $s_{W_N}$ in 
	 \[H^1_{\varphi,\gamma_K}(t^{-\mathbf{k}_J}\cR_{W_N,K}(\delta_{i_0}\delta_{i_0+1}^{-1}))=\cO_{W_N}e_1'\oplus\cdots\oplus \cO_{W_N}e_{|\Sigma|}'\] 
	 is given by $e_{|J|+1}^{\vee}e_{|J|+1}'+\cdots+e_{|\Sigma|}^{\vee}e_{|\Sigma|}'$. Hence the vanishing locus is cut out by $e_{|J|+1}^{\vee}=\cdots=e_{|\Sigma|}^{\vee}=0$ and coincides with $\Sp(A\langle p^Ne_1^{\vee},\cdots,p^Ne_{|J|}^{\vee},p^Ne_{|\Sigma|+1}^{\vee},\cdots, p^Ne_{i_0|\Sigma|}^{\vee}\rangle)$.
\end{proof}
\subsection{Nearby critical crystalline points}
Now we fix $x=(r,\underline{\delta})\in U_{\mathrm{tri}}(\overline{r})(L)\subset (\fX_{\overline{r}}\times \cT_L^n)(L)$. We assume that $r$ is crystalline and $\underline{\delta}=z^{\lambda}\mathrm{unr}(\underline{\varphi}):=(z^{\lambda_i}\mathrm{unr}(\varphi_i))_{i=1,\cdots,n}$ for some $\lambda=(\lambda_{\tau,i})_{\tau\in\Sigma,i=1,\cdots,n}\in(\Z^n)^{\Sigma}$ and $\underline{\varphi}\in (L^{\times})^n$. Assume furthermore that $\varphi_i\varphi_j^{-1}\notin \{1, q\}$ for all $i\neq j$ where $q$ is the cardinal of the residue field of $\cO_K$. This means that $r$ is generic in the sense of \cite[\S4.1]{wu2021local} or the beginning of \S\ref{sec:companionpointsdesciption}, and $\underline{\delta}\in \cT_0^n$. \par
We continue to fix $i_0\in \{1,\cdots,n-1\}$. Let $J:=\{\tau\in \Sigma\mid \lambda_{\tau,i_0}\geq \lambda_{\tau,i_0+1}+1\}$ and let $\mathbf{k}_J=(k_{\tau})_{\tau\in J}:=(\lambda_{\tau,i_0}- \lambda_{\tau,i_0+1})_{\tau\in J}$. We have a filtration $\Fil_{\bullet}D_{\mathrm{rig}}(r)$ such that $\Fil_{i}D_{\mathrm{rig}}(r)/\Fil_{i-1}D_{\mathrm{rig}}(r)\simeq \cR_{L,K}(\delta_i)$. Since $r$ is de Rham, so are $D_{\mathrm{rig}}(r)$ and the subquotient $\Fil_{i_0+1}D_{\mathrm{rig}}(r)/\Fil_{i_0-1}D_{\mathrm{rig}}(r)$. The extension 
\[0\rightarrow \cR_{L,K}(\delta_{i_0})\rightarrow \Fil_{i_0+1}D_{\mathrm{rig}}(r)/\Fil_{i_0-1}D_{\mathrm{rig}}(r)\rightarrow \cR_{L,K}(\delta_{i_0+1})\rightarrow 0\]
defines an element in $H^1_{\varphi,\gamma_K}(\cR_{L,K}(\delta_{i_0}\delta_{i_0+1}^{-1}))$ (up to $L^{\times}$) which lies in the kernel of 
\[H^{1}_{\varphi,\gamma_K}(\cR_{L,K}(\delta_{i_0}\delta_{i_0+1}^{-1}))\rightarrow H^{1}_{\varphi,\gamma_K}(t^{-\mathbf{k}_J}\cR_{L,K}(\delta_{i_0}\delta_{i_0+1}^{-1}))\]
by Lemma \ref{lem:cohomologyderhamphigamma}, Proposition \ref{prop:kernelandderham}, \cite[Lem. 3.3.7, Lemma 3.4.2]{breuil2019local}, the isomorphism \[H^1(\cG_K, L\otimes_{\Q_p}\BdR)\simeq \mathrm{Ext}^1_{\mathrm{Rep}_{L\otimes_{\Q_p}\BdR}(\cG_K)}(L\otimes_{\Q_p}\BdR,L\otimes_{\Q_p}\BdR)\]
and the following lemma.
\begin{lemma}
	Let $W$ be an $L\otimes_{\Q_p}\BdR$-representation of $\cG_K$ which is an extension $0\rightarrow L\otimes_{\Q_p}\BdR\rightarrow W\rightarrow L\otimes_{\Q_p}\BdR\rightarrow 0$ as representations of $\cG_K$. Then $W$ is trivial (i.e. $W \simeq (L\otimes_{\Q_p}\BdR)^2$) if and only if the extension splits.
\end{lemma}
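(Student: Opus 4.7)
The ``if'' direction is tautological: a split extension is isomorphic to $(L\otimes_{\Q_p}\BdR)^2$ with trivial Galois action on each factor. The plan is to prove the ``only if'' direction, so assume $W\simeq (L\otimes_{\Q_p}\BdR)^2$ as $L\otimes_{\Q_p}\BdR[\cG_K]$-modules, with trivial action on the right. I want to produce a $\cG_K$-equivariant, $L\otimes_{\Q_p}\BdR$-linear section of the projection $W\twoheadrightarrow L\otimes_{\Q_p}\BdR$.

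The reduction is the following: it is enough to find a single $\cG_K$-invariant element $w\in W$ lifting $1\in L\otimes_{\Q_p}\BdR$, for then $a\mapsto aw$ furnishes the required section. To locate such a $w$, I will take $\cG_K$-invariants of the given short exact sequence. Using $\BdR^{\cG_K}=K$ and the trivial action on $L$, one has $(L\otimes_{\Q_p}\BdR)^{\cG_K}=L\otimes_{\Q_p}K$, so the sequence becomes the left exact sequence
\[0\rightarrow L\otimes_{\Q_p}K\rightarrow W^{\cG_K}\rightarrow L\otimes_{\Q_p}K,\]
and the task is to show that the last map is surjective.

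The triviality hypothesis identifies $W^{\cG_K}$ with $(L\otimes_{\Q_p}K)^2$, a free $L\otimes_{\Q_p}K$-module of rank two. Hence the image of $W^{\cG_K}\rightarrow L\otimes_{\Q_p}K$ is an $L\otimes_{\Q_p}K$-submodule of $L$-dimension $[K:\Q_p]$. Because $L$ contains every embedding of $K$, there is a decomposition $L\otimes_{\Q_p}K\simeq\prod_{\tau\in\Sigma}L$, whose $L\otimes_{\Q_p}K$-submodules are exactly the products $\prod_{\tau}M_{\tau}$ with $M_{\tau}\in\{0,L\}$. The dimension count forces every $M_{\tau}=L$, so the map is surjective and a $\cG_K$-invariant lift $w$ of $1$ exists.

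I do not anticipate a real obstacle here; the only content is the rank count, combined with the structure of submodules of the étale $\Q_p$-algebra $L\otimes_{\Q_p}K$. An alternative reformulation would be to note $\mathrm{Ext}^1_{L\otimes_{\Q_p}\BdR[\cG_K]}(L\otimes_{\Q_p}\BdR,L\otimes_{\Q_p}\BdR)\simeq H^1(\cG_K,L\otimes_{\Q_p}\BdR)$ and show that the class of $W$ is killed under this identification when $W$ is trivial; this is essentially the same argument via the connecting homomorphism of the long exact sequence, but the direct invariant-theoretic route above seems cleanest.
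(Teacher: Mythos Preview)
Your proof is correct and follows the same approach as the paper: take $\cG_K$-invariants of the short exact sequence, use the triviality of $W$ to count $\dim_L W^{\cG_K}=2[K:\Q_p]$ and deduce surjectivity onto $L\otimes_{\Q_p}K$, and then produce a section. The only cosmetic difference is that the paper splits the resulting short exact sequence of $L\otimes_{\Q_p}K$-modules directly (which is automatic over a product of fields) rather than lifting the element $1$; your detour through the submodule structure of $\prod_{\tau}L$ is harmless but unnecessary, since the $L$-dimension count alone already forces surjectivity.
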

\begin{proof}
	If the extension splits, then $W$ is trivial. Conversely, if $W$ is trivial, then $\dim_LW^{\cG_K}=2[K:\Q_p]$ and we have an exact sequence of $L\otimes_{\Q_p}K$-modules
	$0\rightarrow L\otimes_{\Q_p}K\rightarrow W^{\cG_K}\rightarrow L\otimes_{\Q_p}K\rightarrow 0.$
	The extension splits and we may choose a section $L\otimes_{\Q_p}K\rightarrow W^{\cG_K}$ which induces a section $L\otimes_{\Q_p}\BdR\rightarrow W$ of $L\otimes_{\Q_p}\BdR$-representations.
\end{proof}
Thus $x$ lies in the image of $\mathcal{S}_{n,(i_0,\mathbf{k}_J)}^{\square}(\overline{r})$. Recall the following diagram.
\begin{center}
	\begin{tikzpicture}[scale=1.3]
		\node (A) at (2,1) {$\mathcal{S}^{\square}_{n,(i_0,\mathbf{k}_J)}(\overline{r})\subset \mathcal{S}^{\square}_{n}(\overline{r})$};
		\node (C) at (0,0) {$x\in U_{\mathrm{tri}}(\overline{r})$};
		\node (D) at (4,0) {$\cT_{(i_0,\mathbf{k}_J)}^n\subset \cT_L^n.$};
		\path[->,font=\scriptsize,>=angle 90]
		
		(A) edge node[above]{$\pi_{\overline{r}}$} (C)
		(A) edge node[above]{$\kappa$} (D)
		(C) edge node[above]{$\omega'$} (D)
		;
		\end{tikzpicture}
 \end{center}
 \begin{definition}
	Let $A$ and $B$ be two subsets of a rigid space $X$ over $L$. Then we say that $A$ \emph{quasi-accumulates} at $B$ if for every point $b\in B$ and every affinoid open neighbourhood $Y$ of $b$, $A\cap Y\neq \emptyset$ (compare with \cite[Def. 2.2]{breuil2017interpretation}).
\end{definition}
\begin{lemma}\label{lem:quasi-accumulateszariskiclosure}
	If $A$ and $B$ are two subsets of a rigid space $X$, then $A$ quasi-accumulates at $B$ if and only if for any $b\in B$ and any affinoid open neighbourhood $Y$ of $b$, $b$ lies in the Zariski closure of $Y\cap A$ in $Y$. In particular, if $A$ quasi-accumulates at $B$, then $B$ is contained in the Zariski closure of $A$ in $X$.
\end{lemma}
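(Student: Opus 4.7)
The two implications of the equivalence are of very different flavor, so I would handle them separately. The forward implication (Zariski density in every affinoid neighbourhood $\Rightarrow$ quasi-accumulation) is immediate: if $b$ lies in the Zariski closure of $Y\cap A$ in $Y$, then $Y\cap A$ cannot be empty, for the Zariski closure of the empty subset is empty and cannot contain the point $b$. This gives the ``if'' direction essentially for free.

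The interesting direction is the converse: assuming quasi-accumulation, I want to show that for each $b\in B$ and each affinoid open neighbourhood $Y$ of $b$, the point $b$ lies in the Zariski closure $Z$ of $Y\cap A$ in $Y$. I would argue by contradiction: suppose $b\notin Z$. Writing $Y=\Sp(A_Y)$ and $Z=V(I)$ for an ideal $I\subset A_Y$, the hypothesis $b\notin Z$ means that there exists $f\in I$ with $f(b)\neq 0$. The standard affinoid subdomain
\[Y':=\{y\in Y:\,|f(y)|\geq |f(b)|/2\}\subset Y\]
is then an affinoid open neighbourhood of $b$ entirely contained in $Y\setminus Z$, hence disjoint from $Y\cap A\subset Z$. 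This contradicts the quasi-accumulation hypothesis applied to the neighbourhood $Y'$ of $b$, so $b\in Z$ as required. The only non-formal ingredient here is the production of a small enough affinoid subdomain avoiding $Z$, which is the step I expect to need the most care in wording precisely (and is the reason the statement insists on \emph{affinoid} open neighbourhoods rather than arbitrary admissible ones).

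For the ``In particular'' clause I would reduce to the previous equivalence via an admissible affinoid cover. Let $\overline{A}$ denote the Zariski closure of $A$ in $X$, and pick $b\in B$. Since $X$ is a rigid space, $b$ admits some affinoid open neighbourhood $Y\subset X$. The Zariski closure of $Y\cap A$ in $Y$ is contained in $\overline{A}\cap Y$, because $\overline{A}\cap Y$ is a Zariski closed subset of $Y$ containing $Y\cap A$. By the equivalence just established, $b$ lies in the Zariski closure of $Y\cap A$ in $Y$, and therefore $b\in\overline{A}\cap Y\subset \overline{A}$. This shows $B\subset\overline{A}$, concluding the proof.
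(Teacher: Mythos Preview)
Your proof is correct and follows essentially the same route as the paper. The paper only writes out the non-trivial (``only if'') direction, arguing by contradiction exactly as you do: if $b$ is not in the Zariski closure of $Y\cap A$ in $Y$, one finds an affinoid neighbourhood $Y'\subset Y\setminus\overline{Y\cap A}$ of $b$ with $Y'\cap A=\emptyset$. Where the paper simply invokes that Zariski open subsets of an affinoid are admissible open (citing Bosch) and hence admit an affinoid covering, you make this step concrete by writing down the Laurent domain $\{|f|\geq\epsilon\}$ for a suitable $f\in I$ not vanishing at $b$; this is the same idea spelled out. One cosmetic point: to guarantee the set $\{|f|\geq\epsilon\}$ is a standard affinoid subdomain you should choose $\epsilon\in|L^\times|$ with $0<\epsilon\leq|f(b)|$ rather than literally $|f(b)|/2$, but this changes nothing. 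Your treatment of the ``if'' direction and of the ``In particular'' clause is fine and more explicit than the paper, which leaves both to the reader.
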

\begin{proof}
	We prove by contradiction. Assume that $A$ quasi-accumulates at $B$ and there exists an affinoid neighbourhood $Y$ of $b\in B$ such that $b$ is not in the Zariski closure $\overline{Y\cap A}$ in $Y$. Since Zariski open subsets in an affinoid are admissible open (\cite[Cor. 5.1.9]{bosch2014lectures}), there exists an affinoid neighbourhood $Y'\subset Y\setminus \overline{Y\cap A}$ of $b$. Then $Y'\cap A=\emptyset$, this contradicts the assumption.
\end{proof}
\begin{lemma}\label{lem: quasi-accumulates}
	Let $Y\hookrightarrow X$ be a closed immersion of rigid analytic spaces over $L$. Let $Z$ be a subset of $Y$ and $y\in Y$ be a point. Then $Z$ quasi-accumulates at $y$ in $X$ if and only if $Z$ quasi-accumulates at $y$ in $Y$.
\end{lemma}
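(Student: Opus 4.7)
The lemma should follow from two elementary observations about closed immersions of rigid spaces. The easier implication ($\Leftarrow$) is formal: given any affinoid open neighbourhood $U$ of $y$ in $X$, the intersection $U\cap Y$ is a closed analytic subspace of the affinoid $U$, hence itself affinoid, and is an affinoid open neighbourhood of $y$ in $Y$. Since $Z\subseteq Y$, we have $Z\cap U=Z\cap(U\cap Y)$, which is nonempty by hypothesis, so $Z$ quasi-accumulates at $y$ in $X$.

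For the converse ($\Rightarrow$), the key point is that the affinoid neighbourhoods of $y$ in $Y$ of the form $U\cap Y$ (with $U$ an affinoid open neighbourhood of $y$ in $X$) are cofinal among affinoid neighbourhoods of $y$ in $Y$. Granting this cofinality, given any affinoid neighbourhood $V$ of $y$ in $Y$ we can find an affinoid neighbourhood $U$ of $y$ in $X$ with $U\cap Y\subseteq V$; then the hypothesis gives $Z\cap U\neq\emptyset$, and $Z\subseteq Y$ forces $Z\cap V\supseteq Z\cap U\neq\emptyset$.

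The cofinality claim is local around $y$. Pick an affinoid open $W=\Sp(A)\subseteq X$ containing $y$ small enough that $W\cap Y\simeq\Sp(A/I)$ for an ideal $I\subseteq A$. Replacing $V$ by the affinoid subdomain $V\cap(W\cap Y)$ of $W\cap Y$, and then by a rational subdomain of $W\cap Y$ inside it containing $y$ (which exists by Gerritzen--Grauert), we may assume $V$ is defined by $|\bar f_i|\leq|\bar g|$ for some $\bar f_1,\dots,\bar f_n,\bar g\in A/I$ generating the unit ideal. Choose lifts $\tilde f_i,\tilde g\in A$ and generators $\tilde h_1,\dots,\tilde h_k$ of $I$. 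Since $\bar f_i,\bar g$ generate the unit ideal in $A/I$, the family $\tilde f_i,\tilde h_j,\tilde g$ generates the unit ideal in $A$, so it defines a rational subdomain $U:=W(\tilde f_i,\tilde h_j/\tilde g)\subseteq W$. By construction the constraints $|\tilde h_j|\leq|\tilde g|$ are trivially satisfied on $Y$, so $U\cap Y=V$; and $y\in U$ because $|\tilde f_i(y)|=|\bar f_i(y)|\leq|\bar g(y)|=|\tilde g(y)|$ and $\tilde h_j(y)=0$. The main (mild) subtlety is this lifting step, namely ensuring that the lifted defining functions actually generate the unit ideal in $A$ and cut out exactly $V$ inside $Y$; everything else is bookkeeping.
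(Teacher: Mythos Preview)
Your proof is correct and follows the same overall strategy as the paper: the easy direction is formal, and for the harder direction one shows that affinoid neighbourhoods of $y$ in $Y$ of the form $U\cap Y$ are cofinal, by working locally in an affinoid $\Sp(A)$ with $Y=\Sp(A/I)$ and lifting the defining inequalities of a small subdomain.

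The only difference is in the choice of ``nice'' subdomains. You invoke Gerritzen--Grauert to pass to a rational subdomain $\{|\bar f_i|\leq |\bar g|\}$ and then must adjoin generators $\tilde h_j$ of $I$ to the lifted data so that $\tilde f_i,\tilde h_j,\tilde g$ generate the unit ideal in $A$; this is exactly the ``mild subtlety'' you flag. The paper instead uses that \emph{Weierstrass} domains $\{|f_i|\leq 1\}$ form a basis of the canonical topology (\cite[Lem.~3.3.8, Prop.~3.3.19]{bosch2014lectures}), so one may take $Y''=\{|f_i|\leq 1\}\subset Y'$; then any choice of lifts $\widetilde f_i\in A$ immediately gives a Weierstrass domain $\{|\widetilde f_i|\leq 1\}$ in $X$ restricting to $Y''$, with no unit-ideal condition to verify. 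Both arguments work; the paper's is a line shorter precisely because Weierstrass domains sidestep the lifting subtlety you had to handle.
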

\begin{proof}
	The problem is local and we may assume $X=\Sp(A), Y=\Sp(B)$ and $B=A/I$ for an ideal $I$. Assume that $Z$ quasi-accumulates at $y$ in $X$. We only need to prove that for any affinoid neighbourhood $Y'$ of $y$ in $Y$, there exists an affinoid neighbourhood $Y''\subset Y'$ such that $Y''$ has the form $X'\cap Y$ for some affinoid neighbourhood $X'$ of $y$ in $X$. As affinoid subdomains are open in the canonical topology (\cite[Prop. 3.3.19]{bosch2014lectures}) and Weierstrass domains form a basis of the canonical topology (\cite[Lem. 3.3.8]{bosch2014lectures}), we may assume that $Y''$ has the form $\{x\in Y\mid |f_i(x)|\leq 1\}$ for $f_1,\cdots,f_m\in A/I$. We may choose lifts $ \widetilde{f}_1,\cdots,\widetilde{f}_m$ for $f_1,\cdots,f_m$ in $A$. Then $\{x\in X\mid |\widetilde{f}_i(x)|\leq 1\}\cap Y=\{x\in Y\mid |f_i(x)|\leq 1\}$. 
\end{proof}
\begin{lemma}\label{lem:characters}
	Let $C$ be a positive integer. Then the set of crystalline characters $\underline{\delta}\in\cT_L^n$ such that, if we write $\lambda=\wt(\underline{\delta})$, $\lambda_{\tau,i}- \lambda_{\tau,i+1}>C$ if $i\neq i_0$, $\lambda_{i}-\lambda_{i_0}>C,\lambda_{i}-\lambda_{i_0+1}>C$ if $i<i_0$, $\lambda_{i}-\lambda_{i_0}<-C,\lambda_{i}-\lambda_{i_0+1}<-C$ if $i>i_0+1$, $\lambda_{\tau,i_0}=\lambda_{\tau,i_0+1}$ if $\tau\in J$ and $\lambda_{\tau,i_0+1}-\lambda_{\tau,i_0}>C$ quasi-accumulates at the trivial character in $\cT_L^n$
\end{lemma}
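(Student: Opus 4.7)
The strategy is to explicitly construct a sequence of crystalline characters $\underline{\delta}^{(M)}$, indexed by $M \in \N$, each lying in the set described in the lemma and converging to the trivial character $\mathbf{1}$ in the rigid topology of $\cT_L^n$ as $M \to \infty$. The key observation is that the inequalities in the statement are \emph{archimedean} (requiring large integer weights) while closeness to $\mathbf{1}$ is a \emph{$p$-adic} condition (requiring $p$-adically small weights); both can be simultaneously achieved by scaling a fixed integer vector $\lambda^0$ by $p^M$.

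I would first choose an auxiliary $\lambda^0 = (\lambda^0_{\tau,i}) \in (\Z^\Sigma)^n$ satisfying the strict analogs of all the required inequalities with $C$ replaced by $0$ (and $\lambda^0_{\tau,i_0}=\lambda^0_{\tau,i_0+1}$ for $\tau\in J$, $\lambda^0_{\tau,i_0+1}>\lambda^0_{\tau,i_0}$ for $\tau\notin J$); such a vector can be produced explicitly on each embedding $\tau$. Letting $N$ be the exponent of the torsion subgroup of $\cO_K^\times$ and setting $\lambda^{(M)} := p^M N \lambda^0$, the scaled weights satisfy all the inequalities of the lemma as soon as $p^M N > C$. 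Then I would define
\[ \delta^{(M)}_i := z^{\lambda^{(M)}_i} \cdot \mathrm{unr}\bigl((c_i^{(M)})^{-1}\bigr), \qquad c_i^{(M)} := \prod_{\tau \in \Sigma} \tau(\varpi)^{\lambda^{(M)}_{\tau,i}} \in L^\times, \]
which is crystalline of weight $\lambda^{(M)}_i$. By construction $\delta^{(M)}_i(\varpi)=1$, and $\delta^{(M)}_i$ is trivial on the torsion subgroup of $\cO_K^\times$ (since $N \mid \lambda^{(M)}_{\tau,i}$ and $\tau(u)^N = 1$ for such $u$); hence $\underline{\delta}^{(M)}$ lies in the connected component of $\mathbf{1}$ in $\cT_L^n$.

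It remains to verify that for every affinoid open neighbourhood $Y$ of $\mathbf{1}$ and all $M$ sufficiently large, $\underline{\delta}^{(M)} \in Y$. In local coordinates on the connected component of $\mathbf{1}$, such a $Y$ is cut out by bounds of the form $|\chi(\gamma_j)-1|\leq r$ for generators $\gamma_1,\ldots,\gamma_d$ of the pro-$p$ part of $\cO_K^\times$ as a $\Z_p$-module together with a bound on $|\chi(\varpi)-1|$; the uniformizer value is exactly $1$ by construction, while $\delta^{(M)}_i(\gamma_j) = \prod_{\tau} \tau(\gamma_j)^{\lambda^{(M)}_{\tau,i}}$ has the $p$-adic valuation of its distance to $1$ growing linearly in $M$ (using $\tau(\gamma_j)\in 1+\pi\cO$ together with the $\log$--$\exp$ expansion, so that $\lambda^{(M)}_{\tau,i}\log\tau(\gamma_j) \in p^{M+1}\cO$). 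The main technical point is precisely this rigid-analytic convergence step: one needs a careful identification of affinoid neighbourhoods of $\mathbf{1}\in \cT_L^n$ in explicit local coordinates, and the unramified twist by $\mathrm{unr}((c_i^{(M)})^{-1})$ is essential to prevent divergence of $\delta^{(M)}_i$ at the uniformizer (where $z^{\lambda^{(M)}_i}(\varpi)$ has unbounded valuation in $M$ whenever some $\lambda^0_{\tau,i}\neq 0$).
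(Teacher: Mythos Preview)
Your proposal is correct and follows essentially the same approach as the paper. The paper's characters $x_{\tau}$ (sending $x\in\cO_K^{\times}$ to $\tau(x)$ and $\varpi_K$ to $1$) are exactly your $z^{e_{\tau}}\cdot\mathrm{unr}(\tau(\varpi_K)^{-1})$, so your $\underline{\delta}^{(M)}$ coincides with the paper's $\underline{\delta}^{p^N(q-1)}$ up to relabelling; the paper likewise reduces the rigid-analytic convergence to showing that small closed polydisks $\overline{\mathbb{B}}(0,\epsilon)^d$ sit inside any affinoid neighbourhood of the origin via Weierstrass subdomains, which is precisely the ``main technical point'' you flagged.
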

\begin{proof}
	Let $q=p^{[K_0:\Q_p]}$, where $K_0$ is the maximal unramified subfield of $K$, and $d=|\Sigma|$. Take a uniformizer $\varpi_K$ of $K$. We prove that for any character $\delta\in\cT_L$ such that $\delta(\varpi_K)=1$, the set $\{ \delta^{p^N(q-1)},N\in\N\}$ quasi-accumulates at the trivial character. For some $m$ large, we have $\cO_K^{\times}=\Z_p^d\times \mu(\cO_K)\times \Z/(q-1)$ where $\Z_p^d/\exp(\varpi_K^m\cO_K)$ is finite and $\mu(\cO_K)$ denotes the $p$-power roots of unity in $\cO_K$ (see \cite[Prop. II.5.7]{neukirch2013algebraic}). We only need to consider $\Z_p^d=\Z_pe_1\oplus\cdots\oplus\Z_pe_n$ since the characters $\delta^{p^{N}(q-1)}$ are trivial on the torsion subgroups of $\cO_K^{\times}$ and $\varpi_K^{\Z}$ when $N$ is large. The space $\widehat{\Z_p^d}=\mathbb{U}^d$ which parametrizes characters of $\Z_p^d$ is the open polydisk in $d$ variables $T_1,\cdots,T_d$ by sending a character $\delta$ to $(\delta(e_1)-1,\cdots,\delta(e_d)-1)$. Then $\delta^{p^N}$ is sent to $(\delta(e_1)^{p^N}-1,\cdots,\delta(e_d)^{p^N}-1)$. For any $x\in C$ such that $|x-1|_p<1$, where $|-|_p$ denotes the standard valuation, $\lim_{N\to\infty }|x^{p^N}-1|_p=\lim_{N\to\infty}|\sum_{1\leq i\leq p^N}\binom{p^N}{i}(x-1)^i|_p=0$. Hence for any $\epsilon>0$ and $N$ large enough, we have $(\delta(e_1)^{p^N}-1,\cdots,\delta(e_d)^{p^N}-1)\in\overline{\mathbb{B}}(0,\epsilon)^d:=\{x\in \mathbb{U}^d\mid |T_1(x)|_p\leq\epsilon,\cdots, |T_d(x)|_p\leq\epsilon\}$. Any affinoid neighbourhood of $0$ in $\overline{\mathbb{B}}(0,\frac{1}{p})^d$ contains a Weierstrass subdomain of the form $\{x\in \overline{\mathbb{B}}(0,\frac{1}{p})^d \mid |f_1(x)|_p\leq 1,\cdots |f_m(x)|_p\leq 1\}$ for some $f_1,\cdots,f_m\in L\langle p^{-1}T_1,\cdots,p^{-1}T_d\rangle$ by \cite[Lem. 3.3.8, Prop. 3.3.19]{bosch2014lectures}. Since $|f_i(0)|_p\leq 1$, there exists $\epsilon>0$ satisfying that for all $(x_1,\cdots,x_d)\in C^d$ such that $|p^{-1}x_i|_p<\epsilon$ for all $i=1,\cdots,d$, $|f_i(x_1,\cdots,x_n)|_p\leq 1$. Hence $\overline{\mathbb{B}}(0,\epsilon)^d \subset  \{x\in \overline{\mathbb{B}}(0,\frac{1}{p})^d \mid |f_1(x)|_p\leq 1,\cdots |f_m(x)|_p\leq 1\}$. Therefore, we have $\{\delta^{p^N(q-1)},N\in\N\}$ quasi-accumulates at the trivial character.\par
	We write $x_{\tau}$ for the character that sends $x\in \cO_K^{\times}$ to $\tau(x)$ and $\varpi_{K}$ to $1$. For $i\neq i_0,i_0+1$, let $\delta_i=\prod_{\tau\in\Sigma}x_{\tau}^{-i}$. Let $\delta_{i_0}=\prod_{\tau\in J}x_{\tau}^{-i_0}\prod_{\tau\notin J}x_{\tau}^{-i_0-1}$ and $\delta_{i_0+1}=\prod_{\tau\in J}x_{\tau}^{-i_0}\prod_{\tau\notin J}x_{\tau}^{-i_0}$. Let $\underline{\delta}=(\delta_1,\cdots,\delta_n)$. Then $\underline{\delta}$, as well as its powers, is crystalline. The set $\{\underline{\delta}^{p^N(q-1)}\mid N\in\N \}$ quasi-accumulates at the trivial character by a similar proof as above and $\underline{\delta}^{p^N(q-1)}$ satisfies the requirements for the weights if $N$ is large.
\end{proof}
Finally we can prove the main local results. Write $z^{\mathbf{k}_J}$ for the character $K^{\times}\rightarrow L^{\times}: z\mapsto \prod_{\tau\in J}\tau(z)^{k_{\tau}}$.
\begin{proposition}\label{prop:key}
	Let $X$ be an affinoid open neighbourhood of $x$ in $U_{\mathrm{tri}}(\overline{r})$. 
	\begin{enumerate}
		\item There exists a subset $Z\subset X$ that quasi-accumulates at $x$ and such that for every $z=(r_z,\underline{\delta}_z)\in Z$,
		\begin{enumerate}
			\item $z$ lies in the image of $\mathcal{S}^{\square}_{n,(i_0,\mathbf{k}_J)}(\overline{r})\subset \mathcal{S}^{\square}_{n}$,
			\item $\underline{\delta}_z\in\cT_0^n$ is crystalline, and
			\item if we write $\lambda_z$ for $\wt(\underline{\delta}_z)$, then for every $\tau\in\Sigma$, $\lambda_{z,\tau,i}> \lambda_{z,\tau,i+1}$ if $i\neq i_0$, $\lambda_{z,\tau,i}> \lambda_{z,\tau,i_0},\lambda_{z,\tau,i_0+1}$ if $i<i_0$, $\lambda_{z,\tau,i}<\lambda_{z,\tau,i_0},\lambda_{z,\tau,i_0+1}$ if $i>i_0+1$, and $\lambda_{z,\tau,i_0}<\lambda_{z,\tau,i_0+1}$ if $\tau\notin J$.
		\end{enumerate}
		\item Every point in $Z$ is generic crystalline and regular (i.e. $\lambda_{z,\tau,i}\neq \lambda_{z,\tau,j}$ for all $i\neq j,\tau\in\Sigma$).
		\item Let $\zeta$ be the automorphism of $\cT_L^n$ sending $\underline{\delta}'=(\delta_1',\cdots, \delta_n')$ to 
		\[(\delta_1',\cdots,\delta_{i_0-1}', \delta_{i_0+1}'z^{\mathbf{k}_J},\delta_{i_0}'z^{-\mathbf{k}_J},\delta'_{i_0+2},\cdots,\delta'_{n}).\] 
		Use also the notation $\zeta$ to denote the automorphism of $\fX_{\overline{r}}\times \cT_L^n:(r,\underline{\delta}')\mapsto (r,\zeta(\underline{\delta}'))$. Then $\zeta(Z)$ is a subset of $X_{\mathrm{tri}}(\overline{r})$ and quasi-accumulates at $\zeta(x)$ in $X_{\mathrm{tri}}(\overline{r})$.
	\end{enumerate}
\end{proposition}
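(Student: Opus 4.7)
The overall plan is to build $Z$ as the pushforward to $X$ of a subset of $\mathcal{S}^{\square}_{n,(i_0,\mathbf{k}_J)}(\overline{r})$ quasi-accumulating at a preimage $\widetilde{x}$ of $x$, obtained by lifting through the smooth map $\kappa$ (Lemma~\ref{Lem:smooth}) a well-chosen set of crystalline characters in $\cT^n_{(i_0,\mathbf{k}_J)}$ accumulating at $\omega'(x)=\underline{\delta}$. Such a preimage $\widetilde{x}$ exists by the discussion just before the proposition, where Lemma~\ref{lem:cohomologyderhamphigamma}, Proposition~\ref{prop:kernelandderham} and the $\BdR$-triviality lemma placed $x$ in the image of $\mathcal{S}^{\square}_{n,(i_0,\mathbf{k}_J)}(\overline{r})\to U_{\mathrm{tri}}(\overline{r})$.

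For (1), I would apply Lemma~\ref{lem:characters} with $C$ very large and multiply the resulting characters by $\underline{\delta}$. The weight normalisations in Lemma~\ref{lem:characters} are precisely what is needed so that after translation by $\lambda=\wt(\underline{\delta})$ the weights $\lambda_z$ satisfy the strict inequalities of (1)(c); the equality $\lambda'_{\tau,i_0}=\lambda'_{\tau,i_0+1}$ for $\tau\in J$ places $\underline{\delta}\cdot\underline{\delta}'$ in $\cT^n_{(i_0,\mathbf{k}_J)}$, and openness of $\cT_0^n$ keeps $\underline{\delta}\cdot\underline{\delta}'\in\cT_0^n$ after possibly passing to a neighbourhood. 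Since $\kappa$ is smooth, it is flat and open, so every affinoid neighbourhood $V$ of $\widetilde{x}$ inside $\pi_{\overline{r}}^{-1}(X)\cap\mathcal{S}^{\square}_{n,(i_0,\mathbf{k}_J)}(\overline{r})$ has open image in $\cT^n_{(i_0,\mathbf{k}_J)}$ containing $\omega'(x)$, which meets the accumulating set, and non-empty smooth fibres give preimages in $V$. The resulting set $B$ quasi-accumulates at $\widetilde{x}$, and $Z=\pi_{\overline{r}}(B)$ quasi-accumulates at $x$ in $X$ by openness of $\pi_{\overline{r}}$; conditions (1)(a)--(c) are built in.

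For (2), regularity follows from the strict inequalities in (1)(c) together with the definition of $J$ and $\mathbf{k}_J$, while $\underline{\delta}_z\in\cT_0^n$ combined with crystallinity of each $\delta_{z,i}$ gives genericity. Crystallinity of $r_z$ is the main content: membership in $\mathcal{S}^{\square}_{n,(i_0,\mathbf{k}_J)}(\overline{r})$ says exactly that the extension class of $\Fil_{i_0+1}D_{\mathrm{rig}}(r_z)/\Fil_{i_0-1}D_{\mathrm{rig}}(r_z)$ dies in $H^1_{\varphi,\gamma_K}(t^{-\mathbf{k}_J}\cR_{k(z),K}(\delta_{z,i_0}\delta_{z,i_0+1}^{-1}))$; Proposition~\ref{prop:kernelandderham}, Lemma~\ref{lem:cohomologyderhamphigamma} and the $\BdR$-triviality lemma preceding the proposition then force this rank-two subquotient to be de Rham. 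The weight hypothesis (1)(c) is exactly that of Proposition~\ref{prop:de Rham} with this subquotient playing the role of $D_1$, so $D_{\mathrm{rig}}(r_z)$ is de Rham; Lemma~\ref{lem:crystalline} then upgrades de Rham to crystalline by genericity.

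For (3), the automorphism $\zeta$ is chosen so that $\zeta(\underline{\delta}_z)$ has the unramified parts at positions $i_0,i_0+1$ swapped and the weights at those positions simultaneously replaced by the componentwise $\max$ and $\min$ of $(\lambda_{z,\tau,i_0},\lambda_{z,\tau,i_0+1})$; by regularity and genericity of the crystalline $r_z$, this is exactly the parameter of the other refinement of the rank-two crystalline subquotient $\Fil_{i_0+1}D_{\mathrm{rig}}(r_z)/\Fil_{i_0-1}D_{\mathrm{rig}}(r_z)$, and hence $\zeta(z)\in U_{\mathrm{tri}}(\overline{r})\subset X_{\mathrm{tri}}(\overline{r})$. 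Since $\zeta$ is a rigid automorphism of $\fX_{\overline{r}}\times \cT_L^n$ fixing the first factor, quasi-accumulation of $Z$ at $x$ transports to quasi-accumulation of $\zeta(Z)$ at $\zeta(x)$ in $\fX_{\overline{r}}\times\cT_L^n$; Zariski-closedness of $X_{\mathrm{tri}}(\overline{r})$ and Lemma~\ref{lem:quasi-accumulateszariskiclosure} then place $\zeta(x)\in X_{\mathrm{tri}}(\overline{r})$, and Lemma~\ref{lem: quasi-accumulates} internalises the quasi-accumulation to $X_{\mathrm{tri}}(\overline{r})$. I expect the main obstacle to be in (3): the Hodge-filtration/Frobenius bookkeeping needed to verify that $\zeta(\underline{\delta}_z)$ genuinely parametrises the ``other'' triangulation of $r_z$ (not just abstractly sits in $\fX_{\overline{r}}\times\cT_L^n$), alongside the careful matching of the weight translation in Lemma~\ref{lem:characters} so that all the inequalities of (1)(c) hold simultaneously after multiplication by $\underline{\delta}$.
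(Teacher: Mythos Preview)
Your approach to (1) and (2) matches the paper's: smoothness of $\kappa$ (Lemma~\ref{Lem:smooth}) together with Lemma~\ref{lem:characters} gives (1), and the de Rham/crystalline criteria of \S\ref{sec:crystallin} give (2) exactly as you describe.

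The gap is in (3), precisely where you anticipated. Your claim that $\zeta(z)\in U_{\mathrm{tri}}(\overline{r})$ is false whenever $J\neq\emptyset$. Write $D_1=\Fil_{i_0+1}D_{\mathrm{rig}}(r_z)/\Fil_{i_0-1}D_{\mathrm{rig}}(r_z)$. For $\tau\in J$ the given triangulation of $D_1$ has sub of $\tau$-weight $\lambda_{z,\tau,i_0}=\max(\lambda_{z,\tau,i_0},\lambda_{z,\tau,i_0+1})$, which forces the Hodge line in $D_{\mathrm{dR},\tau}(D_1)$ to coincide with the $\varphi_{z,i_0}$-eigenline. Consequently the \emph{other} triangulation of $D_1$ (for the refinement $(\varphi_{z,i_0+1},\varphi_{z,i_0})$) has sub of $\tau$-weight equal to the \emph{minimum} $\lambda_{z,\tau,i_0+1}$, not the maximum. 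Hence the genuine triangulation parameter of $r_z$ for the swapped refinement differs from $\zeta(\underline{\delta}_z)$ at every $\tau\in J$, and $\zeta(z)\notin U_{\mathrm{tri}}(\overline{r})$: it is the \emph{dominant} companion point for the refinement $\underline{\varphi}'_z$, and that refinement is critical at every $\tau\in J$.

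The paper closes this gap by invoking \cite[Thm.~4.2.3]{breuil2019local}: since $r_z$ is generic crystalline with \emph{regular} Hodge--Tate weights (this is exactly why the inequalities in (1)(c) were arranged), all of its companion points on the trianguline variety exist, and in particular the dominant one $(r_z,z^{\lambda_z^{\mathrm{dom}}}\mathrm{unr}(\underline{\varphi}'_z))=\zeta(z)$ lies on $X_{\mathrm{tri}}(\overline{r})$. This is the substantive input from the regular theory and cannot be replaced by rank-two triangulation bookkeeping. The remainder of your argument for (3) --- transporting quasi-accumulation through the automorphism $\zeta$ of $\fX_{\overline{r}}\times\cT_L^n$ and then internalising to $X_{\mathrm{tri}}(\overline{r})$ via Lemmas~\ref{lem:quasi-accumulateszariskiclosure} and~\ref{lem: quasi-accumulates} --- is correct and is what the paper does.
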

\begin{proof}
	(1) By the definition of quasi-accumulation, we only need to verify that there exists one point $z\in X$ for an arbitrary affinoid open neighbourhood $X\subset U_{\mathrm{tri}}(\overline{r})$ satisfying the condition (a), (b) and (c). Let $\pi_{\overline{r}}^{-1}(X)$ be the preimage of $X$ in $\mathcal{S}^{\square}_{n,(i_0,\mathbf{k}_J)}(\overline{r})$. Then $\pi_{\overline{r}}^{-1}(X)$ is admissible open. We only need to prove that there exists a point $z'\in \pi_{\overline{r}}^{-1}(X)$ such that $\kappa(z')=\omega'(\pi_{\overline{r}}(z'))\in \cT_{(i_0,\mathbf{k}_J)}$ satisfies the conditions in (b) and (c). As $\kappa:\mathcal{S}^{\square}_{n,(i_0,\mathbf{k}_J)}(\overline{r})\rightarrow \cT_{(i_0,\mathbf{k}_J)}^n$ is smooth by Lemma \ref{Lem:smooth}, the image $\kappa(\pi_{\overline{r}}^{-1}(X))$, which contains $\underline{\delta}$, contains an admissible open subset of $\cT_{(i_0,\mathbf{k}_J)}^n$ that contains $\underline{\delta}$ by \cite[Cor. 9.4.2]{bosch2014lectures}. Then the result follows from that the set of points $\underline{\delta}'\in \cT_{(i_0,\mathbf{k}_J)}^n$ that satisfy (b) and (c) quasi-accumulates at $\underline{\delta}$ by Lemma \ref{lem:characters} (since $\underline{\delta}\in \cT_0^n$ and $\cT_L^n\rightarrow\cT_L^n: \underline{\delta}'\mapsto \underline{\delta}\underline{\delta}'$ is an isomorphism).\par
	(2) Assume $z=(r_z,\underline{\delta}_z)\in Z$ as in (1). By (c), the $\tau$-weights of $\underline{\delta}_z$ are pairwise different, thus the Sen weights of $r_z$ are regular. Since $(r_z,\underline{\delta}_z)$ lies in the image of $\mathcal{S}^{\square}_{n,(i_0,\mathbf{k}_J)}(\overline{r})$, the extension
	\[0\rightarrow \cR_{k(z),K}(\delta_{z,i_0})\rightarrow \Fil_{i_0+1}D_{\mathrm{rig}}(r_z)/\Fil_{i_0-1}D_{\mathrm{rig}}(r_z)\rightarrow \cR_{k(z),K}(\delta_{z,i_0+1})\rightarrow 0\]
	corresponds to an element (up to $L^{\times}$) in the kernel of 
	\[H^{1}_{\varphi,\gamma_K}(\cR_{k(z),K}(\delta_{z,i_0}\delta_{z,i_0+1}^{-1}))\rightarrow H^{1}_{\varphi,\gamma_K}(t^{-\mathbf{k}_J}\cR_{k(z),K}(\delta_{z,i_0}\delta_{z,i_0+1}^{-1}))\]
	and in particular, in the kernel of 
	\[H^{1}_{\varphi,\gamma_K}(\cR_{k(z),K}(\delta_{z,i_0}\delta_{z,i_0+1}^{-1}))\rightarrow H^{1}_{\varphi,\gamma_K}(\cR_{k(z),K}(\delta_{z,i_0}\delta_{z,i_0+1}^{-1})[\frac{1}{t}]).\]
	Since $\delta_{z,i_0},\delta_{z,i_0+1}$ are both locally algebraic, we get that $\Fil_{i_0+1}D_{\mathrm{rig}}(r_z)/\Fil_{i_0-1}D_{\mathrm{rig}}(r_z)[\frac{1}{t}]$ is a direct sum of de Rham $(\varphi,\Gamma_K)$-modules over $\cR_{k(z),K}[\frac{1}{t}]$ by \cite[Lem. 3.3.7]{breuil2019local}. Hence the $(\varphi,\Gamma_K)$-module $\Fil_{i_0+1}D_{\mathrm{rig}}(r_z)/\Fil_{i_0-1}D_{\mathrm{rig}}(r_z)$ over $\cR_{k(z),K}$ is de Rham. By Proposition \ref{prop:de Rham} and the condition of weights in (c), $r_z$ is de Rham. By (b) and Lemma \ref{lem:crystalline}, $r_z$ is generic crystalline.\par
	(3) Let $z=(\rho_z,\underline{\delta}_z)\in Z\subset U_{\mathrm{tri}}(\overline{r})$. Then $\underline{\delta}_z=z^{\lambda_z}\mathrm{unr}(\underline{\varphi}_z)$ for a refinement $\underline{\varphi}_z=(\varphi_{z,1},\cdots,\varphi_{z,n})$ where $\lambda_z$ is as in (c) (we abuse the notation $z$ for a point and the character). Let $\underline{\varphi}_z'$ be the refinement such that $\varphi_{z,i}'=\varphi_{z,i}$ if $i\neq i_0,i_0+1$ and $\varphi_{z,i_0}'=\varphi_{z,i_0+1},\varphi_{z,i_0+1}'=\varphi_{z,i_0}$. Let $\lambda^{\mathrm{dom}}_z$ be the weight such that $\lambda_{z,\tau,i}^{\mathrm{dom}}=\lambda_{z,\tau,i}$ if $i\neq i_0,i_0+1$ or if $\tau\in J$ and let $\lambda_{z,\tau,i_0}^{\mathrm{dom}}=\lambda_{z,\tau,i_0+1}, \lambda_{z,\tau,i_0+1}^{\mathrm{dom}}=\lambda_{z,\tau,i_0}$ if $\tau\notin J$. Then $\lambda^{\mathrm{dom}}_z$ is dominant and differs from $ \lambda_z$ by permutations. It is easy to verify that $z^{\lambda_{z}^{\mathrm{dom}}}\mathrm{unr}(\underline{\varphi}'_z)=\zeta(z^{\lambda_z}\mathrm{unr}(\underline{\varphi}_z))$. By \cite[Thm. 4.2.3]{breuil2019local}, all the companion points of $z$ exist on $X_{\mathrm{tri}}(\overline{r})$. In particular the dominant point $(r_z,z^{\lambda_{z}^{\mathrm{dom}}}\mathrm{unr}(\underline{\varphi}'_z))$ corresponding to the refinement $\underline{\varphi}'_z$ is on $X_{\mathrm{tri}}(\overline{r})$. Let $z$ vary, we see $\zeta(Z)\subset X_{\mathrm{tri}}(\overline{r})$. Since $Z$ quasi-accumulates at $x$ in $U_{\mathrm{tri}}(\overline{r})\subset X_{\mathrm{tri}}(\overline{r})$, $Z$ quasi-accumulates at $x$ in $\fX_{\overline{r}}\times \cT_L^n$ by Lemma \ref{lem: quasi-accumulates}. Since $\zeta$ is an automorphism, $\zeta(Z)$ quasi-accumulates at $\zeta(x)$ in $\fX_{\overline{r}}\times \cT_L^n$. As $\zeta(Z)\subset X_{\mathrm{tri}}(\overline{r})$, we see $\zeta(Z)$ quasi-accumulates at $\zeta(x)\in X_{\mathrm{tri}}(\overline{r})$ by Lemma \ref{lem:quasi-accumulateszariskiclosure}.
\end{proof}
\section{Companion points on the eigenvariety}\label{sec:global}
We now prove the existence of all companion points for generic crystalline points on the eigenvariety. We recall the definition of the eigenvariety for definite unitary groups in \cite[\S5.1]{breuil2019local} or \cite[\S3.1]{breuil2017smoothness}.
\subsection{The eigenvariety}
Let $F$ be a quadratic imaginary extension of a totally real field $F^+$. Let $S_p$ be the set of places of $F^+$ that divide $p$. We assume that each $v\in S_p$ splits in $F$ and for every $v\in S_p$, we choose a place $\widetilde{v}$ of $F$ above $v$. Let $G$ be a definite unitary group of rank $n\geq 2$ over $F^+$ that is split over $F$ so that $G_p=\prod_{v\in S_p}G_v:=G(F^+\otimes_{\Q}\Q_p)\simeq \prod_{v\in S_p}\GL_n(F_{\widetilde{v}})$ (we fix an isomorphism $G\times_{F^+}F\simeq\GL_{n/F}$). Let $B_p=\prod_{v\in S_p}B_v$ be the subgroup of upper triangular matrices in $G_p$ and let $T_p=\prod_{v\in S_p}T_v\subset B_p$ be the diagonal torus. Let $U^p=\prod_{v\nmid p} U_v$ be a sufficiently small (see \cite[(3.9)]{breuil2017smoothness}) open compact subgroup of $G(\mathbf{A}_{F^+}^{p\infty})$. Write $\widehat{S}(U^p,L):=\{f:G(F^+)\setminus G(\mathbf{A}^{\infty}_{F^+})/U^p\rightarrow L,\text{ continuous}\}$, where $L/\Q_p$ is a large enough finite extension with the residue field $k_L$. Let $G_p$ acts by right translations on $\widehat{S}(U^p,L)$. Let $S\supset S_p$ be a finite set of places of $F^+$ that split in $F$ and contains all such split places $v$ such that $U_v$ is not maximal. The space $\widehat{S}(U^p,L)$ is also endowed with some usual action of (away from $S$) Hecke operators and one can talk about the $p$-adic representations of $\cG_F:=\Gal(\overline{F}/F)$ associated with Hecke eigenvalues that appear in $\widehat{S}(U^p,L)$. We fix a modular absolutely irreducible $\overline{\rho}:\cG_F\rightarrow \GL_n(k_L)$ and write $\widehat{S}(U^p,L)_{\overline{\rho}}\neq 0$ for the localization of $\widehat{S}(U^p,L)_{\overline{\rho}}$ at the non-Eisenstein maximal ideal of the Hecke algebra over $\cO_L$ associated with $\overline{\rho}$ (see \cite[\S2.4]{breuil2017interpretation} for details). We assume the following ``standard Taylor-Wiles hypothesis''.
\begin{assumption}\label{ass:taylorwiles}
\begin{enumerate}
	\item $p>2$;
	\item $F$ is an unramified extension of $F^+$;
	\item $G$ is quasi-split at all finite places of $F^{+}$;
	\item $U_v$ is hyperspecial at all places $v$ of $F^+$ that are inert in $F$;
	\item $F$ contains no non-trivial $\sqrt[p]{1}$ and the image of $\overline{\rho}\mid_{\Gal(\overline{F}/F(\sqrt[p]{1}))}$ is adequate, see \cite[Rem. 1.1]{breuil2019local}.
\end{enumerate}
\end{assumption}
Let $R_{\overline{\rho},S}$ be the deformation ring of polarized deformations of $\overline{\rho}$ that are unramified outside $S$. This is a Noetherian complete local ring over $\cO_L$ with residue field $k_L$. We have an action of $R_{\overline{\rho},S}$ over $\widehat{S}(U^p,L)_{\overline{\rho}}$ which factors through the Hecke actions and commutes with that of $G_p$ (for details, see also \cite[\S2.4]{breuil2017interpretation}). Let $\mathrm{Spf}(R_{\overline{\rho},S})^{\mathrm{rig}}$ denote the rigid generic fiber of the formal scheme $\mathrm{Spf}(R_{\overline{\rho},S})^{\mathrm{rig}}$ in the sense of Berthelot, cf. \cite[\S7]{de1995crystalline}. Let $\widehat{T}_{p}$ be the rigid space over $\Q_p$ parametrizing continuous characters of $T_{p}$ and we write $\widehat{T}_{p,L}$ for its base change. Denote by $\widehat{S}(U^p,L)_{\overline{\rho}}^{\mathrm{an}}$ the subspace of $\widehat{S}(U^p,L)_{\overline{\rho}}$ consisting of $\Q_p$-locally analytic vectors under the action of $G_p$. Then $\widehat{S}(U^p,L)_{\overline{\rho}}^{\mathrm{an}}$ is a locally analytic representation of $G_p$ and if we apply Emerton's Jacquet module functor with respect to $B_p$, $J_{B_p}(\widehat{S}(U^p,L)_{\overline{\rho}}^{\mathrm{an}})$ becomes an essentially admissible locally analytic representation of $T_p$ (\cite[Def. 6.4.9]{emerton2017locally}). The dual $J_{B_p}(\widehat{S}(U^p,L)^{\mathrm{an}}_{\overline{\rho}})'$ defines a coherent sheaf on the quasi-Stein space $\mathrm{Spf}(R_{\overline{\rho},S})^{\mathrm{rig}}\times \widehat{T}_{p,L}$. We define the eigenvariety $Y(U^p,\overline{\rho})$ to be the scheme-theoretical support of the coherent sheaf $J_{B_p}(\widehat{S}(U^p,L)_{\overline{\rho}}^{\mathrm{an}})'$ in $\mathrm{Spf}(R_{\overline{\rho},S})^{\mathrm{rig}}\times \widehat{T}_{p,L}$. An $L$-point $(\rho,\underline{\delta})\in \mathrm{Spf}(R_{\overline{\rho},S})^{\mathrm{rig}}\times \widehat{T}_{p,L}$ is in $Y(U^p,\overline{\rho})$ if and only if 
\[\Hom_{T_p}(\underline{\delta}, J_{B_p}(\widehat{S}(U^p,L)_{\overline{\rho}}[\fm_{\rho}]^{\mathrm{an}}))\neq 0\]
where $\fm_{\rho}$ is the maximal ideal of $R_{\overline{\rho},S}[\frac{1}{p}]$ corresponding to $\rho$ and $\widehat{S}(U^p,L)_{\overline{\rho}}[\fm_{\rho}]$ denotes the subspace of elements in $\widehat{S}(U^p,L)_{\overline{\rho}}$ annihilated by $\fm_{\rho}$.
\subsection{The companion points}\label{sec:companionpointsdesciption}
We will give the description of all companion points for a generic crystalline point. Suppose $(\rho,\underline{\delta})\in Y(U^p,\overline{\rho})(L)$. Let $\rho_v:=\rho\mid_{\cG_{F_{\widehat{v}}}}$ for $v\in S_p$. Set $\Sigma_v:=\{\tau: F_{\widetilde{v}}\hookrightarrow L\}$ for $v\in S_p$ and $\Sigma_p:=\cup_{v\in S_p} \Sigma_v$. Assume that for each $v\in S_p$, $\rho_v$ is crystalline. Then we have $\varphi$-modules $D_{\mathrm{cris}}(\rho_v)$ over $L\otimes_{\Q_p}F_{\widetilde{v},0}$, where $F_{\widetilde{v},0}$ is the maximal unramified subfield of $F_{\widetilde{v}}$. Take $\tau_{v,0}\in\Sigma_v$. Then $\varphi^{[F_{\widetilde{v},0}:\Q_p]}$ acts linearly on $D_{\mathrm{cris}}(\rho_v)\otimes_{L\otimes_{\Q_p}F_{\widetilde{v},0}, 1\otimes \tau_{v,0}}L$. Let $\{\varphi_{v,1},\cdots,\varphi_{v,n}\}$ be the multiset of eigenvalues of $\varphi^{[F_{\widetilde{v},0}:\Q_p]}$ which is independent of the choice of $\tau_{v,0}$. We say that $\rho$ is \emph{generic crystalline} if $ \varphi_{v,i}\varphi_{v,j}^{-1}\notin \{1, p^{[F_{\widetilde{v},0}:\Q_p]}\}$ for any $i\neq j$ and $v\in S_p$. A \emph{refinement} $\cR=(\cR_v)_{v\in S_p}$ for the generic crystalline representation $\rho$ is a choice of an ordering $\cR_v:\underline{\varphi}_v=(\varphi_{v,1},\cdots,\varphi_{v,n})$ of the $n$ different eigenvalues for all $v\in S_p$. Thus, $\rho$ has $(n!)^{|S_p|}$ different refinements.\par
Let $|\cdot|_{F_{\widetilde{v}}}$ be the norm of $F_{\widetilde{v}}$ such that $|p|_{F_{\widetilde{v}}}=p^{-[F_{\widetilde{v}}:\Q_p]}$. Denote by $\delta_{B_v}$ the smooth character $|\cdot|_{F_{\widetilde{v}}}^{n-1}\otimes\cdots\otimes|\cdot|_{F_{\widetilde{v}}}^{n-2i+1}\otimes\cdots \otimes |\cdot|^{1-n}_{F_{\widetilde{v}}}$ of $T_v\simeq (F_{\widetilde{v}}^{\times})^n$ and $\delta_{B_p}=\otimes_{v\in S_p}\delta_{B_v}$ the character of $T_p$. We define an automorphism $\iota=(\iota_{v})_{v\in S_p}$ of $\widehat{T}_{p,L}=\prod_{v\in S_p}\widehat{T}_{v,L}$ given by $\iota_v((\delta_{v,1},\cdots,\delta_{v,n}))=\delta_{B_v}(\delta_{v,1},\cdots,\delta_{v,i}\epsilon^{i-1}, \cdots,\delta_{v,n}\epsilon^{n-1})$ where $\epsilon$ denotes the cyclotomic characters.\par 
Let $\mathbf{h}=(\mathbf{h}_{\tau})_{\tau\in \Sigma_p}=(h_{\tau,1},\cdots,h_{\tau,n})_{\tau\in \Sigma_p}$ where $h_{\tau,1}\leq \cdots\leq h_{\tau,n}$ are the $\tau$-Hodge-Tate weights of $\rho_v$ if $\tau\in \Sigma_v$. Let $\mathcal{S}_n$ be the $n$-th symmetric group and act on the $n$-tuples $(h_{\tau,1},\cdots,h_{\tau,n})$ in standard ways. For $w=(w_v)_{v\in S_p}=(w_{\tau})_{v\in S_p,\tau\in \Sigma_v}\in (\mathcal{S}_n)^{\Sigma_p}$, define a character $\underline{\delta}_{\cR,w}:=\left(\iota_v\left(z^{w_v(\mathbf{h}_v)}\mathrm{unr}(\underline{\varphi}_v)\right)\right)_{v\in S_p}$
of $T_p$. Let $W_{P_p}=(W_{P_{\tau}})_{v\in S_p,\tau\in\Sigma_v}$ be the subgroup of $(\mathcal{S}_n)^{\Sigma_p}$ consisting of permutations that fix $\mathbf{h}$. Here $P_{\tau}$ denotes the parabolic subgroup of block upper-triangular matrices in $\GL_n$ with the Weyl group (of its Levi subgroup) identified with $W_{P_{\tau}}$. Set $D_{\mathrm{dR},\tau}(\rho_v):=D_{\mathrm{dR}}(\rho_v)\otimes_{L\otimes_{\Q_p} F_{\widetilde{v}},1\otimes\tau} L$. If we choose a basis $(e_1,\cdots, e_n)$ of $D_{\mathrm{dR},\tau}(\rho_v)$ of eigenvectors of $\varphi^{[F_{\widetilde{v},0},\Q_p]}$ with eigenvalues $(\varphi_{v,1},\cdots,\varphi_{v,n})$, then the Hodge-Tate filtration on $D_{\mathrm{dR},\tau}(\rho_v)$ corresponds to a point on the flag variety $\GL_n/P_{\tau}$ which lies in some Bruhat cell $B_{\tau}w_{\cR_{\tau}}P_{\tau}/P_{\tau}$ for some $w_{\cR_{\tau}}\in \mathcal{S}_n/W_{P_{\tau}}$ and $w_{\cR_{\tau}}$ is independent of scaling of the eigenvectors. Here $\cR$ signifies the refinement $\underline{\varphi}$. Let $w_{\cR}=(w_{\cR_{\tau}})_{v\in S_p,\tau\in \Sigma_v}\in (\mathcal{S}_n)^{\Sigma_p}/W_{P_p}$.\par   
Define a subset of points of $\widehat{T}_{p,L}$
\[W(\rho):=\{\underline{\delta}_{\cR,w} \mid w\in (\mathcal{S}_n)^{\Sigma_p}/W_{P_p}, w\geq w_{\cR}, \cR\text{ is a refinement of }\rho \} \] 
where $\geq$ denotes the usual Bruhat order on $\mathcal{S}_n$ (or its quotient).
Notice that there is a natural partition $W(\rho)=\coprod_{\cR}W_{\cR}(\rho)$ and $W(\rho)$ depends only on $\rho_v,v\in S_p$.\par
By the control of the companion points on the trianguline variety in the generic crystalline cases (\cite[\S4.2]{breuil2019local} and \cite[\S4.1]{wu2021local}), we have an inclusion $\{\underline{\delta}'\mid (\rho,\underline{\delta}')\in Y(U^p,\overline{\rho})\}\subset W(\rho)$. Below is our main theorem.
\begin{theorem}\label{theoremmaincrystalline}
	Let $(\rho,\underline{\delta})\in Y(U^p,\overline{\rho})(L)$ be a generic crystalline point as above and recall that we have assumed the Taylor-Wiles hypothesis (Assumption \ref{ass:taylorwiles}). Then 
	\[W(\rho)\subset \{\underline{\delta}'\mid (\rho,\underline{\delta}')\in Y(U^p,\overline{\rho})\}.\]
\end{theorem}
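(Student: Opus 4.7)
The strategy is to reduce the global statement to a local statement on the trianguline varieties and to prove the latter by iterating Proposition \ref{prop:key}.

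First, following the approach of \cite[\S5]{breuil2019local} (as adapted to non-regular points in \cite[\S6]{wu2021local}), one reduces the problem using the patched eigenvariety $X_p(\overline{\rho})$ of \cite{breuil2017interpretation}. The key structural fact is that producing a companion point $(\rho,\underline{\delta}')$ on $Y(U^p,\overline{\rho})$ is equivalent to producing a point of $X_p(\overline{\rho})$ with these coordinates, and, at the smooth locus of $X_p(\overline{\rho})$ established in \cite{wu2021local} (which contains our $(\rho,\underline{\delta})$ under the genericity hypothesis), this last question is controlled by the local trianguline varieties $X_{\mathrm{tri}}(\overline{\rho}_{\widetilde{v}})$ for $v\in S_p$.

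Second, decompose $W(\rho)=\coprod_{\cR}W_{\cR}(\rho)$. Companion points of type (a), obtained by varying $w\geq w_{\cR}$ within a fixed refinement, already exist on $Y(U^p,\overline{\rho})$ by the main theorem of \cite{wu2021local}. For type (b), any two refinements differ by a finite sequence of simple transpositions of adjacent $\varphi$-eigenvalues at a single place $v$, so it suffices to handle a single such swap. This is exactly what Proposition \ref{prop:key} provides on the local trianguline variety: starting from a point $x=(\rho_v,\underline{\delta}_v)\in X_{\mathrm{tri}}(\overline{\rho}_{\widetilde{v}})$ and choosing $i_0$, $J$, $\mathbf{k}_J$ from the Hodge-Tate weights of $\rho_v$ as in \S\ref{sec:hunting}, the proposition produces a set $Z$ of nearby regular generic crystalline points quasi-accumulating at $x$ with $\zeta(Z)\subset X_{\mathrm{tri}}(\overline{\rho}_{\widetilde{v}})$. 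By Lemma \ref{lem:quasi-accumulateszariskiclosure} we conclude $\zeta(x)\in X_{\mathrm{tri}}(\overline{\rho}_{\widetilde{v}})$, which is the desired simple transposition of refinement entries. Iterating over simple transpositions, interleaved with type (a) moves, reaches every element of $W(\rho_v)$.

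Third, one globalises: each regular generic crystalline point in $Z$ lifts, via the smooth locus of $X_p(\overline{\rho})$ of \cite{wu2021local}, to a point $x^i$ on $X_p(\overline{\rho})$, whose companion points all exist on $Y(U^p,\overline{\rho})$ by the regular case \cite{breuil2019local}. Passing to the Zariski limit as $\zeta(Z)$ quasi-accumulates at $\zeta(x)$ then produces the sought companion point of $x$ on $Y(U^p,\overline{\rho})$. The main obstacle is precisely the coherence between the local argument at a single $v$ and the patched eigenvariety: one must check that the auxiliary points $Z$ can be realised inside the image of $X_p(\overline{\rho})$, not merely inside the naive product $\prod_{v\in S_p} X_{\mathrm{tri}}(\overline{\rho}_{\widetilde{v}})$, and that the Zariski limit argument remains compatible with the patched/global structure. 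The control of the smooth locus of $X_p(\overline{\rho})$ obtained in \cite{wu2021local} should be indispensable here, providing the perturbative freedom in the trianguline direction at $v$ needed to apply Proposition \ref{prop:key} while keeping all other global data fixed.
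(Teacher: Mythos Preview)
Your proposal is essentially correct and follows the same approach as the paper. A few precisions are worth noting. First, the relevant input from \cite{wu2021local} is not smoothness of $X_p(\overline{\rho})$ at the non-regular point (which is not claimed), but rather the existence of all type (a) companion points, which lets one replace $(\rho_p,\underline{\delta})$ by a point of $U_{\mathrm{tri}}(\overline{\rho}_p)$; the smoothness used is that of $U_{\mathrm{tri}}(\overline{\rho}_p)$ itself. Second, the mechanism for realising the auxiliary set $Z$ inside $X_p(\overline{\rho})$ (your ``main obstacle'') is precisely that $X_p(\overline{\rho})$ is a union of irreducible components of $\iota(X_{\mathrm{tri}}(\overline{\rho}_p))\times(\fX_{\overline{\rho}^p}\times\mathbb{U}^g)$: smoothness of $U_{\mathrm{tri}}(\overline{\rho}_p)$ forces $(z,(\rho_v,\underline{\delta}_v)_{v\neq v_0})$ to lie in the same component as the starting point, hence in $X_p(\overline{\rho})$, keeping the auxiliary datum $y\in\fX_{\overline{\rho}^p}\times\mathbb{U}^g$ fixed throughout. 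Third, to pass from $z\in Z$ to $\zeta(z)$ inside $X_p(\overline{\rho})$ one invokes classicality at the regular points $z$ together with \cite[Thm.~5.5]{breuil2017smoothness}, not merely the local statement $\zeta(Z)\subset X_{\mathrm{tri}}(\overline{\rho}_{\widetilde{v}})$ of Proposition~\ref{prop:key}(3).
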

\begin{proof}
	We need the patched eigenvariety in \cite[\S3.2]{breuil2017interpretation}. For $v\in S_p$, let $R_{\overline{\rho}_v}'/\cO_L$ be the maximal reduced $\Z_p$-flat quotient of the framed deformation ring of $\overline{\rho}_{\widetilde{v}}$. We can similarly define $R_{\overline{\rho}_v}'$ for $v\in S\setminus S_p$. Let $K_p=\prod_{v\in S_p}\GL_n(\cO_{F_{\widetilde{v}}})\subset \prod_{v\in S_p}\GL_n(F_{\widetilde{v}})\simeq G_p$. Recall that under the Taylor-Wiles assumption, there are some positive integers $g$ and $q$, a patching module $M_{\infty}$ in \cite{caraiani2016patching} over the ring $R_{\infty}=\widehat{\otimes}_{v\in S}R_{\overline{\rho}_v}'[[x_1,\cdots,x_g]]$, an $\cO_L$-morphism $S_{\infty}:=\cO_L[[y_1,\cdots,y_q]]\rightarrow R_{\infty}$ and a surjection $R_{\infty}/\mathfrak{a}\rightarrow R_{\overline{\rho},\mathcal{S}}$ of completed local rings over $\cO_L$ where $\mathfrak{a}=(y_1,\cdots,y_q)$, such that $M_{\infty}$ is a finite projective $S_{\infty}[[K_p]]$-module and $\Pi_{\infty}:=\Hom_{\cO_L}^{\mathrm{cont}}(M_{\infty},L)$ is a $R_{\infty}$-admissible Banach representation of $G_p$ with an isomorphism $\Pi_{\infty}[\mathfrak{a}]\simeq \widehat{S}(U^p,L)_{\overline{\rho}}$ that is compatible with the actions of $R_{\infty}/\mathfrak{a}$ and $R_{\overline{\rho},\mathcal{S}}$ (the action of $R_{\overline{\rho},S}$ factors through the quotient $ R_{\overline{\rho},\mathcal{S}}$). Write $\Pi_{\infty}^{\mathrm{an}}$ for the subspace of locally $R_{\infty}$-analytic vectors in $\Pi_{\infty}$ (\cite[D\'ef. 3.2]{breuil2017interpretation}). The patched eigenvariety $X_p(\overline{\rho})$ is the support of $J_{B_p}(\Pi_{\infty}^{\mathrm{an}})'$ inside $\Spf(R_{\infty})^{\mathrm{rig}}\times\widehat{T}_{p,L}\simeq \Spf(\widehat{\otimes}_{v\in S_p}R_{\overline{\rho}_v}')^{\mathrm{rig}}\times \Spf(\widehat{\otimes}_{v\in S\setminus S_p}R_{\overline{\rho}_v}')^{\mathrm{rig}}\times \Spf(\cO_L[[x_1,\cdots,x_g]])^{\mathrm{rig}}\times\widehat{T}_{p,L}=:\fX_{\overline{\rho}_p}\times \fX_{\overline{\rho}^p}\times \mathbb{U}^g\times\widehat{T}_{p,L}$. By \cite[Thm. 3.21, \S4.1]{breuil2017interpretation}, we have closed embeddings
	\[Y(U^p,\overline{\rho})\hookrightarrow X_p(\overline{\rho})\hookrightarrow \iota\left(X_{\mathrm{tri}}(\overline{\rho}_p)\right)\times (\mathfrak{X}_{\overline{\rho}^p}\times\mathbb{U}^g) \subset \fX_{\overline{\rho}_p}\times\widehat{T}_{p,L}\times \fX_{\overline{\rho}^p}\times \mathbb{U}^g\]
	where $X_{\mathrm{tri}}(\overline{\rho}_p)=\prod_{v\in S_p}X_{\mathrm{tri}}(\overline{\rho}_v)$ and $\iota$ is extended to an automorphism of $\fX_{\overline{\rho}_p}\times \widehat{T}_{p,L}$ by base change. Moreover, $X_p(\overline{\rho})$ is equidimensional and is identified with a union of irreducible components of $\iota\left(X_{\mathrm{tri}}(\overline{\rho}_p)\right)\times (\mathfrak{X}_{\overline{\rho}^p}\times\mathbb{U}^g)$ under the above closed embedding. By the argument as in the first steps of the proof of \cite[Thm. 5.3.3]{breuil2019local}, we are reduced to prove the lemma below.
\begin{lemma}
	Assume that a point $\left((\rho_p=(\rho_v)_{v\in S_p},\iota(\underline{\delta})),y\right)\in\iota\left(X_{\mathrm{tri}}(\overline{\rho}_p)\right)\times (\mathfrak{X}_{\overline{\rho}^p}\times\mathbb{U}^g)$ is in $X_p(\overline{\rho})(L)$ where each $\rho_v, v\in S_p$ is generic crystalline. Then $\left((\rho_p,\underline{\delta}_{\cR,w}),z\right)\in X_p(\overline{\rho})(L)$ if and only if $w\geq w_{\cR}$ in $(\mathcal{S}_n)^{\Sigma_p}/W_{P_p}$ where $\cR$ denotes refinements of $\rho_p$.	
\end{lemma}
	Now we prove the lemma. By \cite[Thm. 4.10]{wu2021local}, we may assume $\left((\rho_p=(\rho_v)_{v\in S_p},\iota(\underline{\delta})),z\right)\in\iota\left(U_{\mathrm{tri}}(\overline{\rho}_p)\right)\times (\mathfrak{X}_{\overline{\rho}^p}\times\mathbb{U}^g)$. Suppose that $\underline{\delta}$ corresponds to a refinement $\underline{\varphi}=(\underline{\varphi}_v)_{v\in S_p}$. We need to prove that the companion points for other refinements exist on the eigenvariety. We only need to prove the existence of companion points for an arbitrary refinement $\underline{\varphi}'$ such that $\underline{\varphi}_v'=\underline{\varphi}_v$ for all $v\neq v_0$ for some $v_0$ and $\varphi_{v_0}'$ is the refinement permuting $\varphi_{v_0,i_0}$ and $\varphi_{v_0,i_0+1}$. By Proposition \ref{prop:key}, there exists a subset $Z\subset X_{\mathrm{tri}}(\overline{\rho}_{v_0})$ that quasi-accumulates at $(\rho_{v_0},\underline{\delta}_{v_0})$ consisting of generic regular crystalline points and their local companion points $\zeta(Z)$ quasi-accumulates at $\zeta((\rho_{v_0},\underline{\delta}_{v_0}))$ which is a local companion point of $(\rho_{v_0},\underline{\delta}_{v_0})$ for the refinement $\underline{\varphi}_{v_0}'$. Since $U_{\mathrm{tri}}(\overline{\rho}_p)$ is smooth at $(\rho_p,\underline{\delta})$, we may assume every $(z, (\rho_{v},\underline{\delta}_v)_{v\neq v_0}),z\in Z$ is contained in the same irreducible component of $X_{\mathrm{tri}}(\rho_p)$ with $((\rho_v,\underline{\delta}_v)_{v\in S_p})$. In particular for any $z\in Z$, $\left(\iota(z,(\rho_v,\underline{\delta}_v)_{v\neq v_0}),y\right)\in X_p(\overline{\rho})$.  By \cite[Thm. 5.5]{breuil2017smoothness}, the classicality (which follows from \cite[Prop. 4.9]{wu2021local}, but essentially \cite[Thm. 3.9]{breuil2017smoothness} is enough for us, and the classicality is only partial for $v_0$), and the discussions in the beginning of \cite[\S5.3]{breuil2019local}, the companion points $\left(\iota(\zeta(z),(\rho_v,\underline{\delta}_v)_{v\neq v_0}),y\right)$ are in $X_p(\overline{\rho})$ and quasi-accumulate at the point $\left(\iota(\zeta((\rho_{v_0},\underline{\delta}_{v_0})),(\rho_v,\underline{\delta}_v)_{v\neq v_0}),y\right)$ in $\fX_{\overline{\rho}_p}\times \widehat{T}_{p,L}\times (\mathfrak{X}_{\overline{\rho}^p}\times\mathbb{U}^g)$. Hence $\left(\iota(\zeta((\rho_{v_0},\underline{\delta}_{v_0})),(\rho_v,\underline{\delta}_v)_{v\neq v_0}),y\right)\in X_p(\overline{\rho})$ by Lemma \ref{lem: quasi-accumulates} and Lemma \ref{lem:quasi-accumulateszariskiclosure}.
\end{proof}
\subsection{Locally analytic socle conjecture}
Let $(\rho,\underline{\delta})\in Y(U^p,\overline{\rho})(L)$ be generic crystalline as before. We write $\lambda=(\lambda_{\tau})_{\tau\in\Sigma_v,v\in S_p}\in (\Z^n)^{\Sigma_p}$ where $\lambda_{\tau}=(\lambda_{\tau,1},\cdots,\lambda_{\tau,n}):= (h_{\tau,n},\cdots, h_{\tau,i}+n-i,\cdots,h_{\tau,1}+n-1)$. We identify the base change to $L$ of the $\Q_p$-Lie algebra of $G_p$ with $\fg:=\prod_{\tau\in \Sigma_p}\mathfrak{gl}_{n/L}$. Let $\overline{\fb}=\prod_{\tau\in \Sigma_p}\overline{\fb}_{\tau}$ be the Borel subalgebra of $\fg$ of lower triangular matrices and $\ft=\prod_{\tau\in \Sigma_p} \ft_{\tau}$ be the Cartan subalgebra of diagonal matrices. We view $\lambda$ as a weight of $\ft$ and extend it to $\overline{\fb}$. For a weight $\mu$ of $\ft$, let $\overline{L}(\mu)$ be the irreducible $\fg$-module with the highest weight $\mu$ in the BGG category attached to $\overline{\fb}$. For a refinement $\cR$ of $\rho$, we write $\underline{\delta}_{\cR,\mathrm{sm}}$ for the smooth part of $\delta_{\cR,w}$, that is $\underline{\delta}_{\cR,\mathrm{sm}}\underline{\delta}_{\cR,w}^{-1}$ is an algebraic character of $T_p$. Notice that $\underline{\delta}_{\cR,\mathrm{sm}}$ is independent of $w$. Let $\overline{B}_p$ be the opposite Borel subgroup of $B_p$ in $G_p$. Recall by Orlik-Strauch's theory \cite{orlik2015jordan}, we have topologically irreducible admissible locally analytic representations ${\cF_{\overline{B}_p}^{G_p}(\overline{L}(-ww_0\cdot \lambda), \underline{\delta}_{\cR,\mathrm{sm}}\delta_{B_p}^{-1})}$, see e.g. \cite[\S4.3]{wu2021local}. Here $w_0$ is the longest element in $\mathcal{S}_n^{\Sigma_p}$ and $ww_0\cdot\lambda$ denotes the usual dot action. By \cite[Prop. 4.9]{wu2021local}, we have the following corollary of Theorem \ref{theoremmaincrystalline} on the locally analytic socle conjecture.
\begin{corollary}\label{cor:socle}
	Under the assumptions and notation of Theorem \ref{theoremmaincrystalline}, there is an injection 
	\begin{equation}\label{equa:socle}
		\cF_{\overline{B}_p}^{G_p}(\overline{L}(-ww_0\cdot \lambda), \underline{\delta}_{\cR,\mathrm{sm}}\delta_{B_p}^{-1})\hookrightarrow \widehat{S}(U^p,L)_{\overline{\rho}}[\mathfrak{m}_{\rho}]^{\mathrm{an}}
	\end{equation}
	of locally analytic representations of $G_p$ for all refinements $\cR$ of $\rho$ and $w\in \mathcal{S}_n^{\Sigma_p}/W_{P_p}, w\geq w_{\cR}$.
\end{corollary}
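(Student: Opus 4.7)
The plan is to simply combine the main theorem of this paper with the author's earlier work \cite[Prop. 4.9]{wu2021local}, which already sets up the dictionary between companion points on the eigenvariety and Orlik-Strauch subrepresentations of the completed cohomology. Concretely, the existence of a companion point $(\rho,\underline{\delta}_{\cR,w})$ on $Y(U^p,\overline{\rho})$ means, by the very definition of the eigenvariety recalled in \S\ref{sec:companionpointsdesciption}, that
\[
\Hom_{T_p}\!\left(\underline{\delta}_{\cR,w},\,J_{B_p}\!\bigl(\widehat{S}(U^p,L)_{\overline{\rho}}[\mathfrak{m}_\rho]^{\mathrm{an}}\bigr)\right)\neq 0.
\]
The plan is therefore to first verify, by feeding Theorem \ref{theoremmaincrystalline} with the indexing pair $(\cR,w)$ appearing in the statement, that every $\underline{\delta}_{\cR,w}$ with $w\geq w_{\cR}$ does occur in $W(\rho)$, hence produces such a nonzero $T_p$-equivariant map into the Jacquet module.

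Next I would invoke \cite[Prop. 4.9]{wu2021local}, which translates such a nonzero Jacquet-module map into an injection of the corresponding Orlik--Strauch representation into $\widehat{S}(U^p,L)_{\overline{\rho}}[\mathfrak{m}_\rho]^{\mathrm{an}}$. The input to that proposition is exactly a companion point of the form $\underline{\delta}_{\cR,w}$, and its output is an injection
\[
\cF_{\overline{B}_p}^{G_p}\!\left(\overline{L}(-ww_0\cdot\lambda),\,\underline{\delta}_{\cR,\mathrm{sm}}\delta_{B_p}^{-1}\right)\hookrightarrow \widehat{S}(U^p,L)_{\overline{\rho}}[\mathfrak{m}_\rho]^{\mathrm{an}}.
\]
The bookkeeping step is just to match normalizations: the character $\iota$ used in the definition of $\underline{\delta}_{\cR,w}$ accounts precisely for the twist by $\delta_{B_p}$ (and the cyclotomic shifts in $\iota_v$), while the dot-action weight $-ww_0\cdot\lambda$ on the algebraic side corresponds to the $\tau$-weights $w_v(\mathbf{h}_v)$ appearing in the algebraic part of $\underline{\delta}_{\cR,w}$. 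This matching is exactly the content of \cite[\S4.3]{wu2021local}, so no new computation is needed here.

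The only mild subtlety is to ensure that the hypotheses of \cite[Prop. 4.9]{wu2021local} are satisfied for our $(\rho,\underline{\delta}_{\cR,w})$: these amount to $\rho$ being generic crystalline (already assumed) and the point $(\rho,\underline{\delta}_{\cR,w})$ being a point of $Y(U^p,\overline{\rho})$, which is exactly what Theorem \ref{theoremmaincrystalline} provides. Since Theorem \ref{theoremmaincrystalline} furnishes \emph{all} pairs $(\cR,w)$ with $w\geq w_\cR$, letting $(\cR,w)$ vary we get an injection for every such pair, which is the statement of the corollary. No obstacle arises beyond this: all the heavy lifting has been done in proving Theorem \ref{theoremmaincrystalline} (for the existence of the point) and in \cite[Prop. 4.9]{wu2021local} (for the passage from companion points to Orlik--Strauch subrepresentations).
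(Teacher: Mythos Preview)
Your proposal is correct and follows exactly the paper's approach: the corollary is stated immediately after the sentence ``By \cite[Prop. 4.9]{wu2021local}, we have the following corollary of Theorem \ref{theoremmaincrystalline}'', which is the entire proof. Your only minor imprecision is that membership of $\underline{\delta}_{\cR,w}$ in $W(\rho)$ is by definition, while Theorem \ref{theoremmaincrystalline} is what puts the corresponding point on $Y(U^p,\overline{\rho})$; but your intent is clear and the argument is the same.
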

\begin{remark}
	Assuming that, in the situation of Theorem \ref{theoremmaincrystalline} and Corollary \ref{cor:socle}, the Hodge-Tate weights of $\rho_v$ satisfy that $h_{\tau,i}\neq h_{\tau,j}$ for all $i\neq j$ and $\tau\in \Sigma_p$, then there exists a finite length admissible locally analytic representation $\Pi(\rho_p)^{\mathrm{fs}}:=\widehat{\otimes}_{v\in S_p}\Pi(\rho_v)^{\mathrm{fs}}$ of $G_p$ in \cite{breuil2020towards} such that the $G_p$-socle of $\Pi(\rho_p)^{\mathrm{fs}}$ coincides with the finite direct sum of pairwise non-isomorphic irreducible admissible locally analytic representations of $G_p$ that are isomorphic to one of those in the left-hand side of (\ref{equa:socle}) and there exists an injection $\Pi(\rho_p)^{\mathrm{fs}}\hookrightarrow \widehat{S}(U^p,L)_{\overline{\rho}}[\mathfrak{m}_{\rho}]^{\mathrm{an}}$ (\cite[Thm. 1.1]{breuil2020towards}). The representation $\Pi(\rho_p)^{\mathrm{fs}}$ is called the ``finite slope part'' since it is constructed from principal series (thus has Jordan-Hölder factors of the type of Orlik-Strauch). Using Corollary \ref{cor:socle}, similar result still holds without the regular assumption on Hodge-Tate weights. One just need to notice that \cite[Prop. 4.8]{breuil2020towards} is proved without any assumption on the regularity of weights and we can define $\Pi(\rho_v)^{\mathrm{fs}}$ in non-regular cases in the same way as \cite[Def. 5.7]{breuil2020towards}. Then the proof of \cite[Thm. 5.12]{breuil2020towards} applies with minor modifications.
\end{remark}
\bibliography{bibfile.bib} 
\bibliographystyle{alpha}
\end{document}